\documentclass[12pt]{article}
\usepackage{amsfonts}
\usepackage{amsmath, graphicx, amsfonts,amssymb, calrsfs}
\usepackage{amsfonts,mathrsfs, color, amsthm,}
\usepackage{bm}
\usepackage[pagebackref]{hyperref}
\usepackage{xcolor}
\usepackage{amsmath, graphicx, amsfonts,amssymb, calrsfs}
\usepackage{amsfonts,mathrsfs, color, amsthm,mathrsfs,float}
\providecommand{\hK}[1]{\|#1\|_{K^*}}
\newcommand{\hKs}[1]{\|#1\|_{K^*}}
\renewcommand{\hK}[1]{\|#1\|_{K}}

\allowdisplaybreaks

\usepackage{enumitem}
\usepackage{hyperref}
\hypersetup{
  colorlinks   = true, %Colours links instead of ugly boxes
  urlcolor     = blue, %Colour for external hyperlinks
  linkcolor    = blue, %Colour of internal links
  citecolor   = red %Colour of citations
  }

 \def\R{\mathbb{R}}

\def\Ksc{\mathcal{K}^{ss}_{(o)}}

\newtheorem{theorem}{Theorem}[section]
\newtheorem{lemma}{Lemma}[section]
\newtheorem{remark}{Remark}[section]
\newtheorem{proposition}{Proposition}[section]
\newtheorem{corollary}{Corollary}[section]

\newtheorem{definition}{Definition}[section]
\newtheorem{theorema}{Theorem}[section]

\def\bt{\begin{theorem}}
\def\et{\end{theorem}}
\def\bl{\begin{lemma}}
\def\el{\end{lemma}}
\def\br{\begin{remark}}
\def\er{\end{remark}}
\def\bc{\begin{corollary}}
\def\ec{\end{corollary}}
\def\bd{\begin{definition}}
\def\ed{\end{definition}}
\def\bp{\begin{proposition}}
\def\ep{\end{proposition}}

\def\RN{\mathbb{R}^N}
\title{
Sharp anisotropic $L^2$-Caffarelli-Kohn-Nirenberg
inequalities associated with the Minkowski functional
}

\author{
Zhenzhen Wei\\[0.4em]
\small Department of Mathematics and Statistics\\
\small Memorial University of Newfoundland\\
\small St. John's, Newfoundland and Labrador A1C 5S7, Canada
}

\date{}

\begin{document}

\maketitle

\begin{abstract}
Let $K\subset \RN$ be a convex body containing the origin in its interior,
and let $\hK{\cdot}$ be its Minkowski functional. In this paper, we develop
an identity-based framework for sharp anisotropic
$L^2$-Caffarelli-Kohn-Nirenberg inequalities associated with the anisotropic
radial derivative
$$
\mathcal R_K(u)(x)=\frac{x\cdot\nabla u(x)}{\hK{x}},
\quad x\in\RN\setminus\{o\}.
$$
A key point of the present work is that $K$ is not assumed to be
origin-symmetric. Consequently, the Minkowski functional $\hK{\cdot}$ need not
be even, and the usual norm-based anisotropic arguments do not apply directly.
The main tools are anisotropic $L^2$-Hardy and
$L^2$-Caffarelli-Kohn-Nirenberg identities with explicit nonnegative
remainders. These identities yield sharp anisotropic
$L^2$-Caffarelli-Kohn-Nirenberg inequalities whose best constants depend on
the parameter region of $(a,b)\in\mathbb R^2$. We also study the attainability
of the sharp constants in a natural completion space and obtain the
corresponding extremal functions. As further consequences, we derive sharp
anisotropic Heisenberg-type uncertainty principles and max-type anisotropic
gradient inequalities. When $K$ is the Euclidean unit ball, our results recover
the classical Euclidean $L^2$ theory; when $K$ is origin-symmetric, they are
consistent with the usual norm-based anisotropic framework. In particular, the
present results extend the sharp $L^2$-Caffarelli-Kohn-Nirenberg theory to general convex bodies
containing the origin in their interiors, for which the Minkowski functional
may be non-even.
\end{abstract}

\smallskip

\noindent\textbf{Keywords:}
Caffarelli-Kohn-Nirenberg inequalities;
Minkowski functional;
anisotropic Hardy identities;
anisotropic radial derivative;
extremal functions;
Heisenberg uncertainty principle.

\smallskip

\noindent\textbf{2020 Mathematics Subject Classification:}
46E35, 26D10, 35A23, 52A40.

\smallskip

\noindent\textbf{Email address:} zhenzhenw@mun.ca

\section{Introduction}
The Caffarelli-Kohn-Nirenberg (CKN) inequalities, introduced in \cite{CKN}, constitute a fundamental class of weighted interpolation inequalities in $\mathbb{R}^N$. They unify and extend several cornerstone results in analysis, including the Hardy, Sobolev, and Gagliardo-Nirenberg inequalities, and play a significant role in the study of elliptic and parabolic equations, the calculus of variations, and mathematical physics.

A central theme in the study of functional and geometric inequalities is the development of a sharp theory, encompassing the determination of optimal constants, the characterization of extremal functions, symmetry properties, and stability. Classical milestones include Talenti’s sharp Sobolev inequality \cite{Talenti} and Lieb’s sharp Hardy-Littlewood-Sobolev inequality \cite{Lieb}; see also \cite{CNV,DD,N} for related developments in Sobolev, Gagliardo-Nirenberg, and weighted inequalities. Within the CKN setting, these issues become considerably more intricate due to the strong dependence of optimal constants and extremals on the parameters. Extensive work has been devoted to sharp constants, existence and structure of optimizers, and symmetry-breaking phenomena; see e.g., \cite{CM13,CC09,CW01,CFL21,CC22,Cos08,DE12,DET09,Dong18,FS,Flynn20, LamLu17,WW03}. More recently, refined CKN inequalities and remainder terms have also been investigated; see \cite{ACP,Cazacu,DM23,DT24,Do,DoLam,DN25a,DN25b,Lam19,Lam21,WW03}. In particular, identity-based approaches have revealed precise formulas with nonnegative remainder terms, linking Hardy-type identities to sharp CKN inequalities. Related methodologies include Hardy identities and the Bessel-pair framework; see \cite{DLL,DPH25,GM11,GM13}.

The modern study of stability in functional inequalities originates from a question of Brezis and Lieb \cite{BL85} concerning the sharp Sobolev inequality in $W^{1,2}(\mathbb{R}^N)$: whether the deficit---defined as the difference between the Dirichlet energy and the optimal Sobolev constant times the critical $L^{2^*}$-norm---can be quantitatively controlled by the squared distance to the manifold of optimizers. This question was resolved by Bianchi and Egnell \cite{BE91}, who established the stability estimate
\begin{equation*}
\int_{\mathbb{R}^{N}} |\nabla u|^{2}\,dx
- S_{N}\!\left(
\int_{\mathbb{R}^{N}} |u|^{\frac{2N}{N-2}}\,dx
\right)^{\!\frac{N-2}{N}}
\ge c_{BE}\!
\inf_{U \in E_{Sob}} \!
\int_{\mathbb{R}^{N}} |\nabla(u - U)|^{2}\,dx,
\end{equation*}
where $S_N$ is the optimal Sobolev constant, $E_{Sob}$ denotes the manifold of extremal functions, and $c_{BE}>0$ is a universal constant. This result shows that the Sobolev deficit controls the distance to the set of optimizers in the natural energy norm and is sharp with respect to both the metric and scaling. However, determining explicit values or bounds for stability constants has remained a challenging problem. Significant progress has been made recently by Dolbeault, Esteban, Figalli, Frank, and Loss \cite{DEFFL}, who developed a systematic approach to the Bianchi-Egnell constant, obtained sharp lower bounds as $N \to \infty$, and established global stability for the Gaussian logarithmic Sobolev inequality via gradient flow techniques. More recently, Chen, Lu, and Tang \cite{CLT24,CLT242,CLT243} derived explicit lower bounds for stability constants in Hardy-Littlewood-Sobolev and higher-order fractional Sobolev inequalities, leading to global stability results for Beckner’s logarithmic Sobolev inequality on the sphere. In a related direction, \cite{CLTW} established optimal stability for the Sobolev inequality on the Heisenberg group using the CR Yamabe flow to upgrade local stability to a global result.

In this paper, we focus on the \(L^2\)-subclass of the CKN inequalities: For
$u\in C_c^\infty(\mathbb R^N\setminus\{o\})$ and $(a,b)\in\R^2$, there exists a constant
$C(N,a,b)\ge0$ independent of $u$, such that
\begin{align}\label{f5}
C^2(N,a,b)
\bigg(
\int_{\mathbb R^N}\frac{u^2}{|x|^{a+b+1}}dx
\bigg)^2
\le
\bigg(
\int_{\mathbb R^N}\frac{u^2}{|x|^{2a}}dx
\bigg)
\bigg(
\int_{\mathbb R^N}\frac{|\nabla u|^2}{|x|^{2b}}dx
\bigg).
\end{align}
Here $|x|$ is the Euclidean norm in $\mathbb{R}^N$, $\nabla u$ represents the gradient of $u$, and
$C_c^\infty(\mathbb R^N\setminus\{o\})$ denotes the space of smooth
compactly supported functions on $\mathbb R^N\setminus\{o\}$. This subclass is particularly significant as it contains several classical inequalities as special cases. Specifically, the choice \(a=-1\), \(b=0\) yields the Heisenberg uncertainty principle, while \(a=1\), \(b=0\) gives Hardy's inequality, and \(a=0\), \(b=0\) recovers the Hydrogen uncertainty principle. Thus, the \(L^2\)-CKN inequalities provide a unified framework that connects these fundamental results in modern analysis. 

The study of sharp constants and extremizers for these inequalities has received significant attention. Costa \cite{Cos08} provided elementary, concise proofs for certain subclasses of \(L^2\)-CKN inequalities \eqref{f5} using integration by parts and expanding-the-square techniques. Building on this, Catrina and Costa \cite{CC09} (see also \cite{CFL21}) determined the sharp constants and completely characterized the extremal functions over the full parameter range via variational methods, spherical harmonic decomposition, and Kelvin-type transforms. 

We now recall the results from \cite{CC09, CFL21}, as they provide the Euclidean benchmark for the anisotropic results proved in this paper.

\begin{theorema}\label{Th:CKNI}
Depending on the location of the point \((a,b)\) in the plane, the following cases arise:

\begin{enumerate}
\item In the region \(\mathcal C\), the optimal constant \(C(N,a,b)\) in the \(L^2\)-CKN inequalities \eqref{f5} is
\[
C(N,a,b)=\frac{\left|N-(a+b+1)\right|}{2},
\]
and it is attained by functions of the form
\[
u(x)=D\exp\!\left(\frac{t|x|^{\,b+1-a}}{b+1-a}\right),
\]
where \(D \neq 0\), with \(t<0\) in \(\mathcal C_1\) and \(t>0\) in \(\mathcal C_2\).

\item In the region \(\mathcal D\), the optimal constant in \eqref{f5} is
\[
C(N,a,b)=\frac{\left|N-(3b-a+3)\right|}{2},
\]
and it is attained by functions of the form
\[
u(x)=D|x|^{2(b+1)-N}\exp\!\left(\frac{t|x|^{\,b+1-a}}{b+1-a}\right),
\]
where \(D \neq 0\), with \(t>0\) in \(\mathcal D_1\) and \(t<0\) in \(\mathcal D_2\).

\item Moreover, the optimal constant in \eqref{f5} fails to be attained only along the line \(a=b+1\), in which case
\[
C(N,b+1,b)=\frac{|N-2(b+1)|}{2}.
\]
\end{enumerate}
\end{theorema}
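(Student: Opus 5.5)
The plan is to prove Theorem~\ref{Th:CKNI} by an identity-based argument: an integration by parts gives an exact Hardy-type identity, Cauchy--Schwarz turns it into the inequality, and a Kelvin-type change of unknown produces the second family of constants. Everything reduces to the radial derivative $x\cdot\nabla u/|x|$, and $|\nabla u|\ge |x\cdot\nabla u|/|x|$. Since the regions $\mathcal C$ and $\mathcal D$ are not spelled out in the statement, I take them to be determined by the integrability of the candidate extremals below, and I will read them off from that.

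\textbf{Step 1 (region $\mathcal C$).} For $u\in C_c^\infty(\RN\setminus\{o\})$ I use
$$\operatorname{div}\big(x|x|^{-(a+b+1)}\big)=(N-a-b-1)|x|^{-(a+b+1)}.$$
Integrating by parts gives
$$(N-a-b-1)\int u^2|x|^{-a-b-1}\,dx=-2\int u\,\frac{x\cdot\nabla u}{|x|}\,|x|^{-a-b}\,dx .$$
I split the weight on the right as $|x|^{-a}\cdot|x|^{-b}$ and apply Cauchy--Schwarz. Together with $|x\cdot\nabla u|\le |x||\nabla u|$, this yields \eqref{f5} with $C=|N-(a+b+1)|/2$.

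Equivalently, I expand
$$0\le\int\Big|\frac{x\cdot\nabla u}{|x|^{b+1}}-t\,\frac{u}{|x|^{a}}\Big|^2dx,$$
which gives a quadratic polynomial in $t$ with an explicit nonnegative remainder. Equality forces two things: $u$ must be radial, and $u$ must solve $r^{-b}u'=t\,r^{-a}u$. This gives $u=D\exp\big(t r^{b+1-a}/(b+1-a)\big)$. These functions lie in the natural completion space exactly when the sign of $t$ makes all three integrals finite at $0$ and at $\infty$, and this defines $\mathcal C_1$ ($t<0$) and $\mathcal C_2$ ($t>0$).

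\textbf{Step 2 (region $\mathcal D$).} I substitute $u=|x|^{\gamma}v$ and choose $\gamma$ so that, after expanding $\int|x|^{-2b}|\nabla(|x|^\gamma v)|^2$, the cross term $2\gamma\int |x|^{2\gamma-2b-2}v\,x\cdot\nabla v$ combines with the $\gamma^2$ term under integration by parts into a vanishing multiple of $\int v^2|x|^{2\gamma-2b-2}$. This happens for $\gamma=2(b+1)-N$, the Kelvin-type exponent.

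The three integrals in \eqref{f5} then become the corresponding integrals for $v$ with shifted parameters $(a',b')=(a-\gamma,\,b-\gamma)$. Applying Step~1 to $v$ gives the constant
$$\tfrac12\big|N-(a'+b'+1)\big|=\tfrac12\big|N-(3b-a+3)\big|.$$
The extremals are $|x|^{2(b+1)-N}\exp\big(t|x|^{b+1-a}/(b+1-a)\big)$, and integrability again fixes the signs of $t$ in $\mathcal D_1$ and $\mathcal D_2$.

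\textbf{Step 3 (optimality and the line $a=b+1$).} Both bounds hold for all $(a,b)$, so the best constant is at least their maximum. It equals the constant attained in the region where the corresponding extremal is admissible; I expect $\mathcal C$ and $\mathcal D$ to be exactly the sets where each bound is the larger one.

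On the line $a=b+1$ the exponent $b+1-a$ vanishes and the "extremal" degenerates to a pure power $|x|^{t}$, which is never in the space. Here I would show sharpness of $|N-2(b+1)|/2$ by truncated and mollified powers $|x|^{-(N-2b-2)/2}$ cut off on $[\varepsilon,1/\varepsilon]$, letting $\varepsilon\to0$. Non-attainment follows because equality in the identity forces the ODE, whose only solutions are powers.

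The main obstacle is showing that the constant cannot be improved in the regions where the extremal fails to be admissible, for nonradial competitors. I would first reduce to radial functions via a spherical harmonic decomposition: each higher mode only increases the gradient term, by $k(k+N-2)\int u_k^2|x|^{-2b-2}$. The remaining one-dimensional problem I would settle with the same truncation sequences.
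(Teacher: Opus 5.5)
\textbf{Gap: the criterion you use to decide which extremals are admissible is wrong.}

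Your lower bounds are correct, and the architecture matches the paper's. Theorem~\ref{Th:CKNI} is only quoted there, so the relevant comparison is its proof of the anisotropic analogue, Theorem~\ref{t3}.
\begin{itemize}
\item Cauchy--Schwarz on $\operatorname{div}(x|x|^{-(a+b+1)})$ gives $C_1=|N-a-b-1|/2$.
\item Your substitution $u=|x|^{2b+2-N}v$ is exactly what the paper's choice $B=r^{-a}-(N-2b-2)r^{-b-1}$ encodes, since $u_r+(N-2b-2)u/r=r^{2b+2-N}v_r$.
\item Hence $\widetilde C\ge\max\{C_1,C_2\}$ for all $(a,b)$. The formula $C_1^2-C_2^2=(b+1-a)(N-2b-2)$ confirms your guess about $\mathcal C$ and $\mathcal D$.
\end{itemize}

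The trouble is the upper bound. Finiteness of the three integrals in \eqref{f5} does not imply membership in the completion of $C_c^\infty(\mathbb{R}^N\setminus\{0\})$. Take $N=3$, $(a,b)=(0,1)\in\mathcal D_2$ and $u=e^{-|x|^2/2}$, which is your first family with $t=-1$. All three integrals are finite, your Step~1 is an equality, and the quotient equals $1/2=C_1$. Yet your own Step~2 gives the lower bound $3/2=C_2$ for every test function. Reading the regions off integrability would therefore declare the first family admissible at this point and return the wrong constant.

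The missing ingredient is that the completion inherits the weighted Hardy inequality $\int|\nabla u|^2|x|^{-2b}\ge\frac{(N-2b-2)^2}{4}\int u^2|x|^{-2b-2}$. So its elements satisfy $\int u^2|x|^{-2b-2}<\infty$ whenever $2b+2\neq N$. In the example, $\int e^{-|x|^2}|x|^{-4}\,dx=\infty$. It is this Hardy term, not the three CKN integrals, that draws the dividing line $b=(N-2)/2$.

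\textbf{How to close it.} You have to approximate explicitly, as the paper does.
\begin{itemize}
\item Cut off with $\varphi_\varepsilon(|x|)$, equal to $1$ on $[\varepsilon,\varepsilon^{-1}]$ and $0$ off $[\varepsilon^2,\varepsilon^{-2}]$, with $|\varphi_\varepsilon'|\le M/(r\log(1/\varepsilon))$. Then mollify on the annulus.
\item The only delicate error term is $\int u^2|\varphi_\varepsilon'|^2|x|^{-2b}\,dx$. It tends to $0$ once $\int|F|^2r^{N-2b-3}\,dr$ over $(\varepsilon^2,\varepsilon)\cup(\varepsilon^{-1},\varepsilon^{-2})$ is $O(\log(1/\varepsilon))$.
\item For $F=\exp(tr^{b+1-a}/(b+1-a))$ this bound holds exactly when $b\le(N-2)/2$ (if $b+1>a$, $t<0$) or $b\ge(N-2)/2$ (if $b+1<a$, $t>0$). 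That is precisely $\mathcal C$.
\item For the second family, the extra factor $r^{2b+2-N}$ reverses these conditions, giving $\mathcal D$.
\item At $b=(N-2)/2$ a linear cut-off would leave an $O(1)$ error, which is why the logarithmic profile is used.
\end{itemize}

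\textbf{Your ``main obstacle'' is misplaced.} The inequality already holds for all $u$, radial or not, and sharpness only needs admissible test functions. Once admissibility is tested correctly, every point off $a=b+1$ has an admissible extremal, so the spherical-harmonic reduction is unnecessary. Your treatment of the line $a=b+1$ is fine.
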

Here,
\[
\begin{aligned}
\mathcal C_1 &:= \big\{(a,b)\in\R^2\big|\ b+1-a>0,\; b\le \tfrac{n-2}{2}\big\},\\
\mathcal C_2 &:= \big\{(a,b)\in\R^2\big|\ b+1-a<0,\; b\ge \tfrac{n-2}{2}\big\},\\
\mathcal C &:= \mathcal C_1 \cup \mathcal C_2,\\
\mathcal D_1 &:= \big\{(a,b)\in\R^2\big|\ b+1-a<0,\; b\leq\tfrac{n-2}{2}\big\},\\
\mathcal D_2 &:= \big\{(a,b)\in\R^2\big|\ b+1-a>0,\; b\geq \tfrac{n-2}{2}\big\},\\
\mathcal D &:= \mathcal D_1 \cup \mathcal D_2.
\end{aligned}
\]

The main purpose of this paper is to study an anisotropic radial counterpart of the weighted $L^2$-CKN inequality \eqref{f5}, in which the Euclidean norm $|x|$ is replaced by the Minkowski functional $\hK{x}$ of a convex body $K$ containing the origin in its interior (see Section \ref{s2} for details), and the Euclidean gradient term $\nabla u$ is replaced by the anisotropic radial derivative $\mathcal R_K(u)(x)=\frac{x\cdot\nabla u(x)}{\hK{x}}$ (see Section \ref{s2} for details). When $K$ is the Euclidean unit ball, $\hK{x}=|x|$ and $\mathcal R_K(u)$ reduces to the usual Euclidean radial derivative. When $K$ is origin-symmetric, $\hK{\cdot}$ is even, and this framework is closely related to Finsler geometry and the Wulff shape; see e.g., \cite{Bellettini}.

Anisotropic functional inequalities have been studied extensively in Finsler and
convex-geometric settings. Let $N\ge2$. In the norm-based Finsler framework, one usually
works with a convex, even, positively one-homogeneous function
$H:\mathbb R^N\to[0,\infty)$, called a Finsler norm. Its polar function is
defined by
$$
H^\circ(x)=\sup_{\xi\ne0}\frac{x\cdot \xi}{H(\xi)},
\quad x\in\mathbb R^N,\ \ \xi\in\mathbb R^N\setminus\{o\}.
$$
Equivalently, $H^\circ$ is the Minkowski functional of an
origin-symmetric convex body. This is the standard anisotropic setting for many
Sobolev, isoperimetric, Trudinger-Moser and Hardy type inequalities.

For instance, one has the following anisotropic Sobolev-type inequality: for
$u\in C_c^\infty(\mathbb R^N)$,
$$
\left(
\int_{\mathbb R^N}|u|^{p^*}dx
\right)^{\frac1{p^*}}
\le
C
\left(
\int_{\mathbb R^N}H(\nabla u)^pdx
\right)^{\frac1p},
\quad
1<p<N,
\quad
p^*=\frac{Np}{N-p},
$$
where $C>0$ is independent of $u$. Such inequalities and the related convex
symmetrization method were developed in \cite{AFTL97,VanSchaftingen}. Sharp
stability results for anisotropic Sobolev and log-Sobolev inequalities were
obtained by Figalli, Maggi and Pratelli \cite{FMP}; see also
\cite{BianchiCianchiGronchi} for recent developments involving anisotropic
symmetrization, convex bodies and isoperimetric inequalities. Related
anisotropic Sobolev-space characterizations were studied by Lam, Maalaoui and
Pinamonti \cite{LMP2019}.

The anisotropic perimeter associated with $H$ is given, for a smooth set
$E\subset\mathbb R^N$, by
$$
P_H(E)=\int_{\partial E}H(\nu_E)d\mathcal H^{N-1},
$$
where $\nu_E$ is the outer unit normal to $\partial E$ and
$\mathcal H^{N-1}$ denotes the $(N-1)$-dimensional Hausdorff measure. The
corresponding anisotropic isoperimetric inequality states that
$$
P_H(E)\ge N\kappa_N^{\frac1N}|E|^{\frac{N-1}{N}},
$$
where
$
\kappa_N=\big|\{x\in\mathbb R^N:H^\circ(x)\le1\}\big|
$
is the volume of the unit Wulff shape associated with $H$, and equality is
attained by Wulff shapes; see e.g.,
\cite{FigalliMaggiPratelli,FMP,FonsecaMuller,Taylor}. Related convex-geometric variational ideas have also appeared recently in
nonlinear potential theory; see, for instance, \cite{LXZZZ26}.

Anisotropic critical exponential inequalities have also been investigated. Lu, Shen, Xue and Zhu \cite{Lu} established weighted anisotropic isoperimetric inequalities and used them to prove the existence of extremals for singular anisotropic Trudinger-Moser inequalities in the Finsler setting. More precisely, using the notation above, for a bounded domain $D\subset\mathbb R^N$ containing the origin in its interior and for $0<\beta<N$, one has the following singular anisotropic Trudinger-Moser inequality:
$$
\sup_{\substack{
u\in W_0^{1,N}(D),
\int_D H(\nabla u)^N dx\le 1}}
\int_D
\exp\Big(\lambda_{N,\beta}|u|^{\frac{N}{N-1}}\Big)
H^\circ(x)^{-\beta}dx
<\infty,
$$
where $W_0^{1,N}(D)$ is the usual Sobolev space with zero boundary trace,
$\beta$ is the singular weight parameter, and $\lambda_{N,\beta}$ is the
corresponding critical constant.

Anisotropic Hardy inequalities can be viewed as special cases of anisotropic
CKN-type inequalities, and sharp anisotropic Hardy inequalities have also been
studied in several Finsler and convex-geometric settings. Della Pietra, di
Blasio and Gavitone \cite{DBG} studied Hardy-type inequalities involving a
smooth Finsler norm and the anisotropic distance to the boundary. More
precisely, let $\Omega\subset\mathbb R^N$ be a domain with nonempty boundary, the anisotropic distance from $x\in\Omega$ to the boundary is defined by
$
d_H(x)=\inf_{y\in\partial\Omega}H^\circ(x-y),
$
and they considered the sharp anisotropic Hardy inequalities associated with $H$ and $d_H$:
$$
\int_\Omega H(\nabla u)^2dx
\ge
C_H(\Omega)
\int_\Omega \frac{u^2}{d_H^2}dx,
\quad u\in H_0^1(\Omega),
$$
where $C_H(\Omega)>0$ denotes the best constant depending on $H$ and $\Omega$.
Mercaldo, Sano and Takahashi \cite{Mercaldo} developed a unified approach to
Finsler Hardy inequalities involving the Finsler distance from a point or from
the boundary, including the sharp constants and non-attainability. In
particular, if $p\ge1$, $p\ne N$, and
$\Omega\subset\mathbb R^N$ is a domain containing the origin, then the following Finsler Hardy inequality associated with $H^\circ$
$$
\bigg|\frac{N-p}{p}\bigg|^p
\int_\Omega \frac{|u|^p}{H^\circ(x)^p}dx
\le
\int_\Omega
\bigg|
\frac{x}{H^\circ(x)}\cdot\nabla u(x)
\bigg|^pdx
$$
holds for $u\in C_c^\infty(\Omega)$ if $1\le p<N$, and for
$u\in C_c^\infty(\Omega\setminus\{o\})$ if $p>N$.

Anisotropic CKN-type inequalities have also been studied in several directions.
Li and Yan \cite{LiYan23} established the anisotropic weighted CKN inequalities with the coordinate-splitting weights involving both
$|x|$ and $|x'|$, where $x=(x',x_N)$ and $x'=(x_1,\ldots,x_{N-1})$. Bao and Chen \cite{BC25} further investigated the Li-Yan anisotropic
CKN-type inequalities in the case $\delta=1$ and $p=2$, including the existence
of extremal functions, symmetry-breaking regions, and symmetry properties of the
extremal functions. 
Shen \cite{Shen25} studied  the sharp anisotropic CKN inequalities and weighted
anisotropic Hardy-Sobolev type inequalities in the Finsler norm setting,
including the sharp constants, existence results, and explicit forms of extremal
functions.

Convex symmetrization and anisotropic rearrangement are important tools in this
anisotropic framework. The convex symmetrization associated with convex bodies
and the corresponding P\'olya-Szeg\"{o} principle were developed in
\cite{AFTL97,Ferone-Volpicelli,VanSchaftingen}; see also
\cite{BianchiCianchiGronchi,Nguyen} for related developments on affine
P\'olya-Szeg\"{o} inequalities, anisotropic symmetrization, convex bodies and
isoperimetric inequalities.  These rearrangement and comparison tools
will be used below to derive the max-type anisotropic gradient inequalities from
the radial derivative inequalities.

To the best of our knowledge, the existing norm-based anisotropic CKN-type
inequalities in the convex body or Finsler setting are usually formulated under
an origin-symmetry or norm assumption. This motivates us to study the anisotropic $L^2$-CKN identities and inequalities associated with the Minkowski functional of a convex body containing the origin in its interior, and the Minkowski functional need not be even.

There are two main difficulties to overcome. The first one comes from the lack of
origin-symmetry of $K$. If $K$ is not origin-symmetric, then the Minkowski functional $\|\cdot\|_K$ is
generally not even. Hence, the symmetric Cauchy-Schwarz inequality
$
|x\cdot y|\le \|x\|_K\cdot\|y\|_{K^*}
$
is no longer available in this form. Instead, one only has the one-sided
anisotropic cases:
$
x\cdot y\le \|x\|_K\cdot\|y\|_{K^*}$ and $
-x\cdot y\le \|x\|_K\cdot\|-y\|_{K^*}.
$
Thus the gradient formulation has to distinguish the two directions
$\nabla u$ and $-\nabla u$. Consequently, one cannot directly obtain a sharp anisotropic CKN inequality with the gradient term by the Cauchy-Schwarz inequality as in the usual norm-based
formulation. This is why the anisotropic radial derivative $$ \mathcal R_K(u)(x)=\frac{x\cdot\nabla u(x)}{\|x\|_K} $$ is the natural object for the sharp anisotropic CKN inequality.
The second one is related to the weighted $L^2$-CKN inequalities. The
sharp constants depend on the position of $(a,b)\in\mathbb R^2$. Hence, one has to choose the appropriate anisotropic identity in each
parameter region and then study the attainability of the sharp constants in a
natural completion space.

Our method combines three ingredients: the anisotropic Hardy and CKN identities involving $\mathcal R_K(u)$, the anisotropic Cauchy-Schwarz inequality, and the anisotropic symmetrization. The identities identify the relevant constants and
the corresponding equality equations; the sharpness of the constants is then
obtained by analyzing the associated extremal functions in a natural completion space. The anisotropic Cauchy-Schwarz inequality and the anisotropic symmetrization are used to derive the anisotropic gradient inequalities from the anisotropic radial-derivative inequalities.

With this strategy, we obtain three main results. The first one consists of
two identities associated with the Minkowski functional. More precisely,
let $K\subset \RN$ be a convex body containing the origin in its interior, $0<R\leq\infty$, $\sigma_K (x) =\frac{x}{\hK{x}}$ for $x\in\RN\setminus\{o\}$, $B_R^K:=\{x\in\mathbb{R}^N:\hK{x}< R\}$
and $A, B, H\in C^1(0,R)$ with $H=AB$.
One has the following  Hardy-type identity for $u\in C_c^\infty(B_R^K\backslash\{o\})$,   
\begin{align*}
&\int_{B_R^K}A^2\big|\mathcal{R}_K(u)\big|^2dx
=\int_{B_R^K}\Big[\mathrm{div}\big(\sigma_K(x)H(\hK{x})\big)-B^2\Big]u^2dx+\int_{B_R^K}\big|A\mathcal{R}_K(u)+Bu\big|^2dx,
\end{align*} and the following CKN-type identity
for $u\in C_c^\infty(B_R^K\backslash\{o\})\backslash\{0\}$, 
\begin{align*}
\bigg(\int_{B_R^K} \big|A\mathcal{R}_K(u)\big|^2dx \bigg)^{\frac{1}{2}}
\bigg(\int_{B_R^K}B^2 u^2dx\bigg)^{\frac{1}{2}}&=\frac{1}{2}\int_{B_R^K}\mathrm{div}\Big(H(\hK{x})\sigma_K(x)\Big)u^2dx \nonumber \\ &+\frac{1}{2}\int_{B_R^K}\Bigg|\frac{\|Bu\|^{\frac{1}{2}}A\mathcal{R}_K(u)}{\|A\mathcal{R}_K(u)\|^{\frac{1}{2}}}+\frac{\|A\mathcal{R}_K(u)\|^{\frac{1}{2}}Bu}{\|Bu\|^{\frac{1}{2}}}\Bigg|^2dx.
\end{align*}
These two identities reduce to the classical Euclidean identities when
$K$ is the Euclidean unit ball. Suitable choices of $A$ and $B$ yield
the anisotropic Hardy identities, the anisotropic Heisenberg identity, and
the weighted anisotropic $L^2$-CKN identities used later to determine the
sharp constants. The second one concerns the sharp anisotropic Heisenberg uncertainty principle.
Let
$
\mathcal C
=
W^{1,2}(\mathbb R^N)
\cap
\big\{
u:
\int_{\mathbb R^N}\hK{x}^2u^2dx<\infty
\big\},
$ where $W^{1,2}(\mathbb R^N)$ is the usual Sobolev space.
Using the Hardy-type and CKN-type identities together with anisotropic symmetrization, one has, for every $u\in\mathcal C$,
$$
\int_{\mathbb R^N}\|-\nabla u\|_{K^*}^2dx
+
\int_{\mathbb R^N}\hK{x}^2u^2dx
\ge
N\int_{\mathbb R^N}u^2dx
$$
and
$$
\bigg(
\int_{\mathbb R^N}\|-\nabla u\|_{K^*}^2dx
\bigg)^{\frac12}
\bigg(
\int_{\mathbb R^N}\hK{x}^2u^2dx
\bigg)^{\frac12}
\ge
\frac N2
\int_{\mathbb R^N}u^2dx.
$$
Both constants are sharp, and the optimizers are Gaussian-type functions generated
by $\hK{x}$.
The third one gives the following sharp anisotropic $L^2$-CKN inequalities: for
$a,b\in\mathbb R$, $u\in C_c^\infty(\mathbb R^N\setminus\{o\})$
\begin{align}\label{t-1}
\bigg(
\int_{\mathbb R^N}
\frac{|\mathcal R_K(u)|^2}{\hK{x}^{2b}}dx
\bigg)^{\frac{1}{2}}
\bigg(
\int_{\mathbb R^N}
\frac{u^2}{\hK{x}^{2a}}dx
\bigg)^{\frac{1}{2}}
\ge
\widetilde{C}(N,a,b)
\int_{\mathbb R^N}
\frac{u^2}{\hK{x}^{a+b+1}}dx,
\end{align}
where $\widetilde{C}(N,a,b)$ is the sharp constant and depends on the position of $(a,b)\in\mathbb R^2$.
In the corresponding regions of $(a,b)$, the sharp constant is given by
$$
\widetilde{C}(N,a,b)
=
\bigg|\frac{N-a-b-1}{2}\bigg|
\quad\text{or}\quad
\widetilde{C}(N,a,b)
=
\bigg|\frac{N-3b+a-3}{2}\bigg|.
$$
We also characterize the optimizers for \eqref{t-1} in the completion space $\mathcal C_{K,a,b}$,
the completion of
$C_c^\infty(\RN\setminus\{o\})$ under the norm
$$
\|u\|_{\mathcal C_{K,a,b}}
=
\bigg(
\int_{\mathbb R^N}
\frac{|\mathcal R_K(u)|^2}{\hK{x}^{2b}}dx
+
\int_{\mathbb R^N}
\frac{u^2}{\hK{x}^{2a}}dx
\bigg)^{\frac{1}{2}}.
$$

When $K$ is the Euclidean
unit ball, our results recover the classical Euclidean $L^2$ identities and
inequalities; when $K$ is origin-symmetric, they are consistent with the usual
norm-based anisotropic framework, while the present formulation also applies to general convex bodies
containing the origin in their interiors, for which the Minkowski functional need not be even.

The paper is organized as follows. In Section \ref{s2}, we collect basic
facts about convex bodies, polar bodies, Minkowski functionals,
anisotropic radial derivatives, and anisotropic symmetrization. In
Section \ref{s3}, we establish anisotropic $L^2$-Hardy and $L^2$-CKN
identities associated with the Minkowski functional. In Section
\ref{s4}, we combine these identities with anisotropic symmetrization to
prove sharp anisotropic Heisenberg-type inequalities and exhibit
Gaussian-type optimizers. In Section \ref{s5}, we establish sharp
anisotropic $L^2$-CKN inequalities, determine their sharp constants in
the relevant parameter regions, and characterize their extremal
functions in the associated completion spaces. As direct consequences,
we derive sharp radial Heisenberg-type inequalities and the corresponding
max-type anisotropic gradient inequalities.

\section{Preliminaries}\label{s2}
Let $N\ge 2$ be a positive integer. Denote by  $\RN$ the $N$-dimensional Euclidean space.  Let $$B_2^N=\{x\in\RN:|x|\leq 1\} \ \ \mathrm{and} \ \ S^{N-1}=\{x\in\RN:|x| = 1\}$$  be the origin-centered unit ball and  unit sphere in $\RN$, respectively, where $|x|=\sqrt{x\cdot x}$ is the Euclidean norm of $x\in \RN$. We use  $x\cdot y$ to represent the inner product of $x,y\in \RN$.

Let $\Omega$ be an open domain in $\RN$, and let
$$
C^\infty(\Omega)
=
\big\{
u:\Omega\to\mathbb R:
u\ \mathrm{has\ continuous\ derivatives\ of\ all\ orders\ on}\ \Omega
\big\}.
$$
That is, for $u\in C^\infty(\Omega)$ and
$\nu=(\nu_1,\nu_2,\dots,\nu_N)$ with each $\nu_i$ being a nonnegative
integer,
\begin{align*}
D^\nu u
&=
\frac{
\partial^{\nu_1+\nu_2+\cdots+\nu_N}u
}{
\partial x_1^{\nu_1}\partial x_2^{\nu_2}\cdots\partial x_N^{\nu_N}
}
\end{align*}
exists and is continuous on $\Omega$. Let
$$
C_c^\infty(\Omega)
=
\big\{
u:u\in C^\infty(\Omega)\ \mathrm{such\ that\ the\ support\ of}\ u\
\mathrm{is\ a\ compact\ subset\ of}\ \Omega
\big\}.
$$
Let $p\ge1$. Denote by $L^p(\RN)$ the space of real-valued functions on $\RN$ with finite $L_p$ norms. Here, the $L_p$ norm for $u\in L^p(\RN)$ is defined by
$$
\|u\|_p=\bigg(\int_{\RN}|u(x)|^pdx\bigg)^{\frac{1}{p}}.
$$ 
For simplicity, let $\|\cdot\|:=\|\cdot\|_2$. Denote by $W^{1,p}(\RN)$ the
(first-order) $L^p$-Sobolev space, meaning that, if $u\in W^{1,p}(\RN)$, then $u\in L^p(\RN)$ and its weak first-order derivative $\nabla u$ satisfies that $$\|\nabla u\|_p=\bigg(\int_{\RN}|\nabla u(x)|^pdx\bigg)^{\frac{1}{p}}<\infty$$
where $\nabla u= (\partial_1 u,\dots,\partial_N u)$ denotes the (weak) gradient of $u$. Hereafter, $\partial_i u$,  for $i=1,\dots,N$, denotes the weak first-order partial derivative of  $u$, which can be formulated by
\begin{align*}
\int_{\RN} u\partial_i\varphi dx
= -\int_{\RN}  \varphi   \partial_i u dx \ \ \mathrm{for\ any}\ \ \varphi\in C_c^\infty(\RN).
\end{align*}

By a convex body, we mean a set $K\subset \RN$ such that it is compact, has non-empty interior, and satisfies
$(1-\lambda)y+\lambda x\in K\ \text{for every}\ x,y\in K\  \text{and}\  \lambda\in[0,1].$
Let $\mathcal{K}$ denote the class of all convex bodies in $\RN$, $\mathcal{K}_{(o)}$ the class of convex bodies containing the origin $o$ in their interiors, and $\mathcal{K}_{(o)}^c$ the subclass of $\mathcal{K}_{(o)}$ consisting of convex bodies that are  origin-symmetric.
For $K\in\mathcal{K}$, define its support function   $h_K:\mathbb{R}^N \to\mathbb{R}$ by
\begin{align*}
h_K(x)=\max\big\{ y\cdot x: y\in K\big\}\ \ \mathrm{ for\ }\ x\in \mathbb{R}^N.
\end{align*} If $\xi\in S^{N-1}$ and $y\in \partial K$ such that $h_K(\xi)=\xi\cdot y$, then we say $\xi$ is an outer unit  normal vector of $\partial K$ at $y$. Note that it can happen  at  some $y\in \partial K$, its outer unit normal vectors may not be unique. A convex body is said to be smooth if each boundary point $y\in \partial K$ has unique outer normal vector. Clearly,   $h_K(rx)=rh_K(x)$ for $x\in\RN$ and $r\geq0$. A
compact convex subset in $\RN$ is uniquely determined by its support function.

Let $K\in\mathcal{K}_{(o)}$. Define its radial function $\rho_K:\RN\setminus\{o\}\to(0,\infty)$  by
$$
\rho_K(x)=\max\{\lambda\geq0:\lambda x\in K\} \ \ \mathrm{for} \ \  x\in\RN\setminus\{o\}.
$$   Clearly,  $\rho_K(rx)=r^{-1}\rho_K(x)$ for $x\in\RN\setminus\{o\}$ and $r>0$.  The \textit{polar body} $K^*$ of $K\in\mathcal{K}_{(o)}$ is defined by
$$K^*=\big\{x\in\RN:x\cdot y\leq1\text{ for }y\in K\big\}.$$
It can be checked that $(K^*)^*=K$ for each $K\in\mathcal{K}_{(o)}$ and
\begin{align*}
\rho_K(x)h_{K^*}(x)=h_K(x)\rho_{K^*}(x)=1\ \text{for \ any}\ x\in\RN\setminus\{o\}.
\end{align*} Define  $\|\cdot\|_K$, the  Minkowski functional  of $K\in\mathcal{K}_{(o)}$,  by
$$\hK{x}=\inf\big\{\lambda\geq 0:x\in\lambda K\big\},$$
where $
{\lambda K}=\big\{\lambda x:x\in K\big\}
$ and $\lambda\ge0$.
It is clear that, for $ K\in\mathcal{K}_{(o)}$ and for $x\in\RN$,
\begin{align}\label{f21}
\hKs{x}=h_K(x) \ \ \mathrm{and} \ \ \hK{x}=h_{K^*}(x).
\end{align}

A function $u:\RN\to\mathbb{R}$ is said to be \textit{radially symmetric} with respect to $K\in\mathcal{K}_{(o)}$, if there exists a function, still denoted by $u$, such that $u(x)=u(\hK{x})$ for any $x\in \RN.$   By  $\partial K$ we mean   the boundary of $K$.
Then, for $K\in\mathcal{K}_{(o)}$, one has
$$
K=\big\{x\in\RN:\hK{x}\leq 1\big\}\ \ \mathrm{and}\ \ \frac{x}{\hK{x}}\in \partial K\ \ \mathrm{for}\ \ x\neq o. $$  Note that $\partial K =\{x\in\RN: \hK{x}=1\}$. Let $\mathcal{L}:(0,\infty)\times \partial K\to\RN\setminus\{o\}$ be  the map given by  $\mathcal{L}(r,\sigma_K (x))=r\sigma_K (x)$, where
$$ r =\hK{x}\ \mathrm{and}\ \ \sigma_K (x) =\frac{x}{\hK{x}}.
$$ 
Note that $x=r\sigma_K(x)$ and $\|\sigma_K (x)\|_K=1$, due to  the positive homogeneity of $\|\cdot\|_K$. Moreover,
\begin{align*} \frac{\partial x}{\partial r}=\sigma_K (x)=\frac{x}{\hK{x}}\ \ \mathrm{and}\ \
\frac{\partial u}{\partial r}=\sum_{i=1}^{N}\frac{\partial u}{\partial {x_i}}\frac{\partial {x_i}}{\partial r}=\nabla u\cdot\frac{x}{\hK{x}}=\nabla u\cdot\sigma_K(x),\end{align*}  for $u$ whose (weak) gradient $\nabla u$ exists.
Let $d\mu_K$ be the finite Borel measure on $\partial K$ induced by the anisotropic
polar formula
$$
\int_{\mathbb R^N}u(x)dx
=
\int_0^\infty\int_{\partial K}u(r\sigma)r^{N-1}d\mu_K(\sigma)dr
$$
for every non-negative measurable function $u.$ 
Denote by $L^p(\partial K,d\mu_K)$ the corresponding $L^p$ space on $\partial K$.
The formula $ \mathcal{R}_K(u) =\nabla u\cdot\sigma_K(x)$ defines the anisotropic radial derivative of $u$ with respect to $K\in\mathcal{K}_{(o)}$.
 On the other hand, for each $x, y\in \RN$, one has $\sigma_K(x)\in \partial K$ and $\sigma_{K^*}(y)\in \partial K^*$. This  further gives  $\sigma_K(x)\cdot \sigma_{K^*}(y)\leq 1$ and hence,  the following anisotropic Cauchy-Schwarz inequality: for any $x, y\in \RN$ and $K\in \mathcal{K}_{(o)}$,
\begin{align}\label{f22-0}
-\hK{x} \cdot \|-y\|_{K^*}\le x \cdot y\leq  \hK{x} \cdot \|y\|_{K^*}.
\end{align}  When $K\in\mathcal{K}_{(o)}^c$, i.e., $K$  is an origin-symmetric convex body in $\RN$,   $\hK{x}$ is a norm induced,  and hence, it holds that
\begin{align*}
|x \cdot y| \leq  \hK{x} \cdot \|y\|_{K^*}.
\end{align*}  

We say that $K\in\mathcal{K}_{(o)}$ is strictly convex if $\partial K$ does not contain a line segment.  Assume that $K\in\mathcal{K}_{(o)}$ is smooth and strictly convex. Then, $K^*$ is also smooth and strictly convex.  By \eqref{f21}, one gets that $\nabla\hK{x}=\nabla h_{K^*}(x)$ is the unique point $y\in \partial K^*$ such that, for any $x\neq o$,  \begin{align}
    x\cdot y=\hK{x} =x\cdot \nabla\hK{x} \ \ \mathrm{and} \ \ \big\|\nabla\|x\|_{K}\big\|_{K^*}=1. \label{relation-x-y}
\end{align} 
Let $\Ksc\subset \mathcal{K}_{(o)}$ be the set of convex bodies which are smooth and strictly convex.  

We next recall the anisotropic symmetrization associated with the convex body $K$.
This symmetrization will be used in Section \ref{s4}. The construction
below is the convex symmetrization with respect to $K$ (see \cite{VanSchaftingen}).

For a measurable set $E\subset \mathbb{R}^N$ with $0<|E|<\infty$,  let $|E|$ stand for Lebesgue measure of $E$,
we can define
$K$-symmetral of $E$ by
$$
E^K:=r_EK,\quad r_E:=\bigg(\frac{|E|}{|K|}\bigg)^{\frac1N}.
$$
Equivalently, up to a set of Lebesgue measure zero,
$$
E^K=\{x\in\mathbb{R}^N:\hK{x}<r_E\}.
$$ Clearly, $|E^K|=|E|$.
Given a function $u\in W^{1,p}(\RN)$, define its distribution
function by
$$
\mu_u(t):=\big|\{x\in\RN:|u(x)|>t\}\big|,
\quad t>0,
$$
and its decreasing rearrangement by
$$
u^*(s):=\inf\{t>0:\mu_u(t)\le s\},
\quad s\in\big(0,|\mathrm{supp}u|\big),
$$
where $\mathrm{supp}u$ refers to the support of $u$.
We define the rearrangement of $u$ associated with $K$ by
$$
u^K(x):=u^*\big(|K|\cdot\hK{x}^N\big), \quad x\in\RN.
$$
The following elementary properties follow directly from the definition of $u^K$
and will be used repeatedly below.
\begin{proposition}
Let $K\in\mathcal K_{(o)}$, and let $u$ be a measurable function on $\mathbb R^N$
such that $\mu_u(t)<\infty$ for $t>0$. Then the following properties hold.
\begin{enumerate}
\item[(i)]  $|u|$ and $u^K$ are equimeasurable, i.e.,
$
\big|\{x\in\mathbb R^N:|u(x)|>t\}\big|
=
\big|\{x\in\mathbb R^N:u^K(x)>t\}\big|$ for $t>0.
$

\item[(ii)] For $t>0$,
$
\{x\in\mathbb R^N:u^K(x)>t\}
=
\{x\in\mathbb R^N:|u(x)|>t\}^K,
$
up to a set of Lebesgue measure zero. 

\item[(iii)] For $p>0$,
$
(|u|^p)^K=(u^K)^p$
a.e. in $\mathbb R^N.
$

\item[(iv)] If $E\subset\mathbb R^N$ is measurable and $0<|E|<\infty$, then
$
(\chi_E)^K=\chi_{E^K}$
a.e. in $\mathbb R^N.
$
\end{enumerate}
\end{proposition}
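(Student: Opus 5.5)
The plan is to reduce everything to the one-dimensional decreasing rearrangement $u^*$. The two basic facts are that $u^K$ is a radial profile composed with $\hK{x}$, and that the sublevel sets $\{\hK{x}<r\}=rK$ have Lebesgue measure $|K|r^N$. Throughout, the symmetral $E^K$ is read as the open set $\{\hK{x}<r_E\}$. This set differs from $r_EK$ only by $r_E\,\partial K$, which is Lebesgue-null because $\partial K$ is the boundary of a convex body.

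\textbf{Step 1: properties (ii) and (i).} Since $u^*$ is nonincreasing and right-continuous, for $t>0$ we have $u^*(s)>t$ if and only if $s<\mu_u(t)$. This is the standard equivalence for the right-continuous generalized inverse, and it uses that $\mu_u$ is itself nonincreasing and right-continuous. It follows that
\[
\{u^K>t\}=\{x: |K|\hK{x}^N<\mu_u(t)\}=\big\{x:\hK{x}<(\mu_u(t)/|K|)^{1/N}\big\}.
\]
This set is exactly $\{|u|>t\}^K$ whenever $0<\mu_u(t)<\infty$. If $\mu_u(t)=0$, both sides are empty, or null, with the convention that the symmetral of a null set is empty. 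This proves (ii). Taking measures and using $|rK|=r^N|K|$ gives $|\{u^K>t\}|=\mu_u(t)$, which is (i).

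\textbf{Step 2: property (iii).} Because $s\mapsto s^p$ is strictly increasing on $[0,\infty)$, we have $\mu_{|u|^p}(t)=\mu_u(t^{1/p})$. Substituting into the infimum defining the rearrangement shows $(|u|^p)^*=(u^*)^p$ pointwise on $(0,\infty)$. Composing with $|K|\hK{x}^N$ then yields $(|u|^p)^K=(u^K)^p$; the identity holds in fact everywhere, not just a.e.

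\textbf{Step 3: property (iv).} For $\chi_E$ we have $\mu(t)=|E|$ when $0<t<1$ and $\mu(t)=0$ when $t\ge1$. Hence $(\chi_E)^*=\chi_{[0,|E|)}$, and so
\[
(\chi_E)^K=\chi_{\{|K|\hK{x}^N<|E|\}}=\chi_{E^K}.
\]
The only a.e.\ caveat is the boundary set $r_E\,\partial K$ from the open-versus-closed version of $E^K$, which is null.

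\textbf{Main obstacle.} The only delicate point is the equivalence $u^*(s)>t\iff s<\mu_u(t)$ together with its endpoint conventions, i.e.\ when $\mu_u(t)$ is $0$ or when $s$ lies at the edge of the domain of $u^*$. I will check this carefully using the right-continuity of $\mu_u$. Everything else is direct substitution.
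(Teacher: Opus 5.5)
Your proof is correct and is essentially what the paper intends: the paper gives no written proof, only the remark that these properties follow directly from the definition of $u^K$, and your equivalence $u^*(s)>t\iff s<\mu_u(t)$ supplies exactly that verification. This equivalence needs only that $\mu_u$ is nonincreasing and right-continuous, together with reading $u^*$ through the infimum formula for all $s\ge0$, so that $u^K=0$ where $|K|\,\|x\|_K^N\ge|\{|u|>0\}|$. The paper argues explicitly only for (iii), inside the proof of the weighted Hardy--Littlewood proposition, where it deduces (iii) from the level-set identity (ii) and obtains equality a.e.; your substitution $\mu_{|u|^p}(t)=\mu_u(t^{1/p})$ gives it pointwise instead.
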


If, in addition, $K$ is
origin-symmetric, then $\|\cdot\|_K$ is even and the above construction reduces
to the usual convex symmetrization associated with $K$; see
\cite{AFTL97,Ferone-Volpicelli}.

We use the following anisotropic P\'olya-Szeg\"{o} principle for convex
rearrangements.
\begin{lemma}[\cite{Ferone-Volpicelli}]\label{l2}
Let $p\in(1,\infty)$, $K\in\mathcal K_{(o)}$, and $u\in W^{1,p}(\mathbb R^N)$. Then
\begin{align}\label{f54}
\int_{\mathbb{R}^N}\|-\nabla u(x)\|_{K^*}^pdx
\geq\int_{\mathbb{R}^N}\|-\nabla u^K(x)\|_{K^*}^pdx.
\end{align}
Moreover, if $u\ge0$ and
$
\big|\big\{x:|\nabla u^K(x)|=0\big\}\cap\big\{x:0<u^K(x)<\operatorname*{ess\,sup}u\big\}\big|=0,
$
then the equality holds in (\ref{f54}) if and only if there exists $x_0\in\mathbb{R}^N$ such that
$$
u(x)=u^K(x+x_0)\quad\text{a.e. on }\mathbb{R}^N.
$$
\end{lemma}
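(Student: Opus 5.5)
The proof I would give is the Talenti-type argument: combine the coarea formula with the anisotropic (Wulff) isoperimetric inequality for the perimeter whose Wulff shape is $K$.

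\textbf{Reductions and the relevant perimeter.} I will carry out the argument for $u\ge0$, the case addressed by the equality statement. By density and lower semicontinuity, I first treat $u\in W^{1,p}$ that are compactly supported and smooth enough for Sard's theorem to apply to almost every level. For a set $E$ with smooth boundary and outer unit normal $\nu_E$, define
$$
P_K(E)=\int_{\partial E}h_K(\nu_E)\,d\mathcal H^{N-1}=\int_{\partial E}\|\nu_E\|_{K^*}\,d\mathcal H^{N-1}.
$$
The superlevel set $E_t=\{u>t\}$ has outer normal $\nu=-\nabla u/|\nabla u|$. By positive homogeneity, $\|-\nabla u\|_{K^*}=|\nabla u|\,h_K(\nu)$, which is exactly why the non-even gauge $\|-\nabla u\|_{K^*}$ appears.

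I then use the Wulff inequality $P_K(E)\ge N|K|^{1/N}|E|^{(N-1)/N}$, with equality iff $E$ is a translate of a dilate of $K$. It holds for any $K\in\mathcal K_{(o)}$, since the Wulff shape of $h_K$ is $K$ itself and evenness is never used (Brunn--Minkowski gives it). Moreover $\{u^K>t\}$ is a dilate $r K$, so $u^K$ attains equality at every level.

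\textbf{Core inequality.} Let $\mu(t)=|E_t|$ and $\Phi(t)=\int_{E_t}\|-\nabla u\|_{K^*}^p\,dx$. By the coarea formula,
$$
-\Phi'(t)=\int_{\{u=t\}}h_K(\nu)^p|\nabla u|^{p-1}\,d\mathcal H^{N-1},\qquad -\mu'(t)\ge\int_{\{u=t\}}\frac{d\mathcal H^{N-1}}{|\nabla u|}.
$$
Hölder's inequality gives
$$
P_K(E_t)\le\Big(\int_{\{u=t\}} h_K(\nu)^p|\nabla u|^{p-1}\Big)^{1/p}\Big(\int_{\{u=t\}}|\nabla u|^{-1}\Big)^{1-1/p}.
$$
Combining these,
$$
-\Phi'(t)\ge \frac{\big(N|K|^{1/N}\mu(t)^{(N-1)/N}\big)^p}{(-\mu'(t))^{p-1}}.
$$
For $u^K$ each of these steps is an equality: $|\nabla u^K|$ is constant on $\partial(rK)$ and $h_K(\nu)$ is matched, and $u^K$ has the same $\mu$ because $u$ and $u^K$ are equimeasurable (Proposition (i)). Integrating in $t$ over $(0,\infty)$ yields \eqref{f54}. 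The singular part of $\mu$ only strengthens the inequality for $u$, and for $u^K$ the $\mu'$ term is exact.

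\textbf{Equality case, the main obstacle.} If equality holds in \eqref{f54}, then every inequality above is an equality for almost every $t$. First, $\mu$ is absolutely continuous; this is where the hypothesis that $\{\nabla u^K=0\}\cap\{0<u^K<\operatorname{ess\,sup}u\}$ is null is used, as in Brothers--Ziemer. Second, equality in the Wulff inequality forces $E_t=x(t)+r(t)K$ for almost every $t$. Third, equality in Hölder forces $|\nabla u|$ to be constant on $\{u=t\}$.

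The hard step is showing that the centre $x(t)$ does not depend on $t$. I would follow Brothers--Ziemer and Ferone--Volpicelli. The sets $E_t$ are nested, so $x(s)+r(s)K\subset x(t)+r(t)K$ for $s>t$. The absence of flat zones makes $r(t)$ absolutely continuous and strictly monotone. Writing $u$ along rays in terms of the common gradient modulus on each level, one shows that $x(t)$ is locally Lipschitz with $x'(t)=0$ almost everywhere. This gives $u(x)=u^K(x+x_0)$.

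Because $K$ is not symmetric, the containment relations must be handled with $h_K$ in the normal direction $\nu$, not in $-\nu$. I expect this orientation bookkeeping, together with the absolute-continuity argument for $x(t)$, to be the delicate part.

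For signed $u$ the same argument applies to $u^+$. However, on $\{u<0\}$ one has $-\nabla|u|=\nabla u$, so a general sign would require comparing $\|\nabla u\|_{K^*}$ with $\|-\nabla u\|_{K^*}$, which are not comparable in general. I would therefore state and use the lemma for $u\ge0$, which is the case needed later.
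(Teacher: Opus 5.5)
Your argument for $u\ge 0$ follows the standard Talenti route: coarea formula, H\"older on level sets, the Wulff inequality for the surface tension $h_K=\|\cdot\|_{K^*}$, and Brothers--Ziemer for the equality case. This is the route of the sources the paper relies on; the paper gives no proof of the lemma and only cites Ferone--Volpicelli. The genuine gap is that you do not prove the lemma as stated. \eqref{f54} is asserted for every $u\in W^{1,p}(\mathbb R^N)$, with $u^K=(|u|)^K$, and you explicitly drop signed $u$. Your remark that only $u\ge0$ is needed later is also incorrect. In the proof of Theorem \ref{t2}, \eqref{f54} is applied to an arbitrary $u\in\mathcal W$, and one cannot first replace $u$ by $|u|$ there, for exactly the incomparability reason you give.

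The missing observation is that no comparison of $\|\nabla u\|_{K^*}$ with $\|-\nabla u\|_{K^*}$ is needed. Run your argument with $\mu(t)=|\{|u|>t\}|$, $\Phi(t)=\int_{\{|u|>t\}}\|-\nabla u\|_{K^*}^p\,dx$, and the level set $\{|u|=t\}=\{u=t\}\cup\{u=-t\}$. On $\{u=-t\}$ the outer normal of $\{u<-t\}$ is $\nu_-=\nabla u/|\nabla u|$. Hence $\|-\nabla u\|_{K^*}=|\nabla u|\,h_K(-\nu_-)=|\nabla u|\,h_{-K}(\nu_-)$, so the integrand measures the anisotropic perimeter whose Wulff shape is $-K$. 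Since $|{-K}|=|K|$, the Wulff inequality for $-K$ has the same constant. Write $P_{\pm K}(E)=\int_{\partial E}h_{\pm K}(\nu_E)\,d\mathcal H^{N-1}$ and $\mu_\pm(t)=|\{\pm u>t\}|$. Then
\[
\int_{\{|u|=t\}}\frac{\|-\nabla u\|_{K^*}}{|\nabla u|}\,d\mathcal H^{N-1}
=P_K(\{u>t\})+P_{-K}(\{u<-t\})
\ge N|K|^{\frac1N}\Big(\mu_+(t)^{\frac{N-1}{N}}+\mu_-(t)^{\frac{N-1}{N}}\Big)
\ge N|K|^{\frac1N}\mu(t)^{\frac{N-1}{N}},
\]
where the last step is subadditivity of $s\mapsto s^{(N-1)/N}$. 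Your H\"older and coarea steps on $\{|u|=t\}$ then give the same lower bound for $-\Phi'(t)$. The integration in $t$ and the comparison with $u^K$, which has the same $\mu$, go through unchanged.

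There are two smaller corrections.

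\emph{H\"older equality condition.} Equality in your H\"older step holds iff $\|-\nabla u\|_{K^*}$, not $|\nabla u|$, is constant $\mathcal H^{N-1}$-a.e.\ on the level set. For $u^K=\phi(\|x\|_K)$ one has $\|-\nabla u^K\|_{K^*}=|\phi'(r)|$ because $\nabla\|x\|_K\in\partial K^*$. By contrast, $|\nabla u^K|=|\phi'(r)|\,|\nabla\|x\|_K|$ varies along $\partial(rK)$ unless $K$ is a ball. So in the concentricity step, the quantity to propagate along rays is the common value of $\|-\nabla u\|_{K^*}$ on each level.

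\emph{Smooth approximation.} Reducing to smooth functions is fine for \eqref{f54} but cannot be used for the equality case. There the coarea argument must be carried out directly for the $W^{1,p}$ function, as in Brothers--Ziemer.
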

The following weighted anisotropic Hardy-Littlewood inequality is a
straightforward consequence of the anisotropic Hardy-Littlewood inequality for
convex symmetrization; see e.g., Van Schaftingen
\cite{VanSchaftingen}. Since we need this  weighted form
later, we record it here and provide a proof for completeness.
\begin{proposition} \label{p1}
Let $p>0$, $K\in\mathcal{K}_{(o)}$,  and $u$ be a non-negative measurable function on
$\mathbb R^N$.
\begin{enumerate}
\item[(i)] If $\omega:[0,\infty)\to[0,\infty)$ is non-increasing, then
\begin{align}\label{f39}
\int_{\RN} u(x)^p \omega\big(\hK{x}\big)dx
\le
\int_{\RN} \big(u^K(x)\big)^p \omega\big(\hK{x}\big)dx.
\end{align}
If $\omega$ is strictly decreasing and the equality holds in \eqref{f39}, then $u=u^K$ a.e. on $\RN$.

\item[(ii)] If $v:[0,\infty)\to[0,\infty)$ is non-decreasing, then
\begin{align}\label{f35}
\int_{\RN} u(x)^p v\big(\hK{x}\big)dx
\ge
\int_{\RN} \big(u^K(x)\big)^p v\big(\hK{x}\big)dx.
\end{align}
If $v$ is strictly increasing and the equality holds in \eqref{f35}, then $u=u^K$ a.e. on $\RN$.
\end{enumerate}
\end{proposition}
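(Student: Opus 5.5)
The plan is to use the layer-cake representation together with the equimeasurability of $u$ and $u^K$ stated in the preceding proposition, items (i)–(iv).

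Write $u^p(x)=\int_0^\infty \chi_{\{u^p>t\}}(x)\,dt$. By (iii), $(u^p)^K=(u^K)^p$, so it suffices to treat the case $p=1$ with $u$ replaced by $u^p$. For (i), write the non-increasing weight as $\omega(\hK{x})=\omega(\infty)+\int_0^\infty \chi_{\{\omega(\hK{x})>s\}}\,ds$, where $\omega(\infty):=\lim_{r\to\infty}\omega(r)\ge0$. Each superlevel set $\{\omega(\hK{\cdot})>s\}$ is a $K$-ball $\{\hK{x}<\rho_s\}$ or $\{\hK{x}\le\rho_s\}$ (up to null sets, or all of $\RN$). Tonelli then gives
\[
\int_{\RN} u^p\,\omega(\hK{x})\,dx=\omega(\infty)\int_{\RN} u^p\,dx+\int_0^\infty\!\!\int_0^\infty \big|\{u^p>t\}\cap \rho_s K\big|\,ds\,dt .
\]
The elementary bound $|E\cap \rho K|\le \min(|E|,|\rho K|)=|E^K\cap \rho K|$ holds because $E^K$ and $\rho K$ are homothetic copies of $K$, so one contains the other. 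Combined with (ii), which says $\{(u^K)^p>t\}=\{u^p>t\}^K$, and with equimeasurability for the $\omega(\infty)$ term, this yields \eqref{f39}.

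For (ii), when $\int u^p v(\hK{x})\,dx<\infty$ I would reduce to (i) via truncation. Set $\omega=M-\min(v,M)$ and apply (i) to $u\chi_{\{u\le n\}}$ or to functions of finite support measure. A cleaner route is to write $v(\hK{x})=\int_0^\infty \chi_{\{\hK{x}\ge\rho_s\}}\,ds$ directly and use $|E\setminus \rho K|\ge |E|-\min(|E|,|\rho K|)=|E^K\setminus\rho K|$. This argument avoids any subtraction of infinities, so I will use it.

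For the equality statements, suppose $\omega$ is strictly decreasing. Then the radii $\rho_s$ sweep all values, and equality forces $|\{u^p>t\}\cap\rho K|=\min(|\{u^p>t\}|,|\rho K|)$ for a.e. $t$ and a.e. $\rho$, hence by continuity in $\rho$ for all $\rho$. Taking $\rho=r_E$ gives $|E\cap r_E K|=|E|$, so $E=E^K$ up to a null set for a.e. $t$. Monotonicity in $t$ upgrades this to all $t$, and the layer-cake formula then gives $u=u^K$ a.e. The case of a strictly increasing $v$ is handled identically with complements.

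The main obstacle is making the equality case rigorous, in particular handling $t$ levels where $|\{u^p>t\}|=\infty$ (excluded by the hypothesis $\mu_u<\infty$) and ensuring the finiteness needed to deduce equality of integrands from equality of integrals. I would assume that the common value is finite, since otherwise the equality claim is vacuous or must be interpreted accordingly.
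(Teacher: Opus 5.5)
Your proposal is correct. It reaches the result by a different, more self-contained route than the paper, and it is more careful about finiteness.

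\textbf{Inequality (i).} The paper cites Van Schaftingen's Hardy--Littlewood inequality $\int fg\le\int f^Kg^K$ with $g=\chi_{B_r^K}$, and uses the layer-cake formula only for the weight $\omega(\hK{x})$. You use a double layer-cake instead. You prove by hand the one case of Hardy--Littlewood that is needed: $E^K=r_EK$ and $\rho K$ are nested dilates of $K$, so $|E\cap\rho K|\le\min(|E|,|\rho K|)=|E^K\cap\rho K|$.

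\textbf{Inequality (ii).} The paper applies (i) to $m-\min\{v,m\}$ and cancels $m\int u^p$ from both sides, which tacitly needs $\int u^p<\infty$. Its equality claim for (ii) is only sketched, and $m-\min\{v,m\}$ is not strictly decreasing, so the equality part of (i) does not literally apply. Your complement argument, $|E\setminus\rho K|\ge|E^K\setminus\rho K|$, needs only $|E|<\infty$ for each level set. It also handles the equality cases of (i) and (ii) on the same footing.

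\textbf{Equality case.} The paper proves a separate set lemma: if $\int_E\omega=\int_{E^K}\omega$ with $\omega$ strictly decreasing, then $E=E^K$. It does this by comparing $\omega$ with its value at $r_E$ on $E\setminus E^K$ and on $E^K\setminus E$. You instead show $|E\cap\rho K|=\min(|E|,|\rho K|)$ for every $\rho$ and evaluate at $\rho=r_E$, which is slicker.

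\textbf{Finiteness caveat.} Your requirement that the common value be finite is genuinely necessary, and the paper assumes it silently. If $\omega(\infty)>0$ and $\int u^p=\infty$ while $\mu_u<\infty$, both sides equal $+\infty$ for every such $u$, including ones that are not $K$-radial.

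Two small fixes for the write-up.
\begin{enumerate}
\item The split $\omega(\hK{x})=\omega(\infty)+\int_0^\infty\chi_{\{\omega(\hK{x})>s\}}\,ds$ counts $\omega(\infty)$ twice. Integrate only over $s>\omega(\infty)$, or drop the split altogether: your set inequality becomes a trivial equality when $\rho K$ is replaced by $\RN$.
\item Equality of the double integrals gives the set identity for a.e.\ $s$, not for a.e.\ $\rho$. What you actually need is that the good radii are dense. This holds because, for $r_1<r_2$, every $s$ in $(\omega(r_2),\omega(r_1))$ has $\rho_s\in[r_1,r_2]$, and this interval has positive length by strict monotonicity. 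Continuity of $\rho\mapsto|E\cap\rho K|$, which follows from $|\rho\,\partial K|=0$ and $|E|<\infty$, then gives the identity for all $\rho$.
\end{enumerate}
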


\begin{proof}
($i$) 
We start from the anisotropic Hardy-Littlewood inequality for convex
symmetrization (see \cite{VanSchaftingen}):
for any nonnegative measurable functions $f$ and $g$ on $\RN$,
\begin{align}\label{f29}
\int_{\RN} f(x)g(x)dx
\le
\int_{\RN} f^K(x)g^K(x)dx.
\end{align}
For $r>0$, let
$
B_r^K:=\big\{x\in\RN:\hK{x}<r\big\},
$ then 
$
\big(\chi_{B_r^K}\big)^K=\chi_{B_r^K}.
$

Taking
$
f=u^p$ and
$
g=\chi_{B_r^K}
$ in \eqref{f29},
then the anisotropic Hardy-Littlewood inequality \eqref{f29} gives
\begin{align}\label{f28}
\int_{B_r^K} u(x)^pdx
\le
\int_{B_r^K} \big(u(x)^p\big)^Kdx=
\int_{B_r^K} \big(u^K(x)\big)^pdx
\end{align}
for $r>0$. 
Indeed, for $s>0$, one has
$
\big\{u^K>s\big\}
=
\big\{u>s\big\}^K$(see \cite{VanSchaftingen}). Then
$$
\big\{\big(u^p\big)^K>s\big\}
=
\big\{u^p>s\big\}^K=
\big\{u>s^{\frac1p}\big\}^K
=
\big\{u^K>s^{\frac1p}\big\}
=
\big\{\big(u^K\big)^p>s\big\},
$$
and hence,
$
\big(u^p\big)^K=\big(u^K\big)^p$
a.e. in $\RN.
$

For $t>0$, let
$
r(t)=\sup\{r\ge0:\omega(r)>t\},
$ with the convention that $r(t)=0$ if the set is empty. 
As $\omega$ is non-increasing, one has
$$
\{x\in\mathbb R^N:\omega(\hK{x})>t\}=\{x\in\mathbb R^N:\hK{x}<r(t)\}=B_{r(t)}^K
$$
up to a null set. 
By the layer-cake representation, one has
\begin{align}\label{f30}
\omega\big(\hK{x}\big)
=
\int_0^\infty \chi_{\big\{y\in\RN:\omega(\|y\|_K)>t\big\}}(x)dt
=
\int_0^\infty \chi_{B_{r(t)}^K}(x)dt.
\end{align}
Therefore, by \eqref{f28}, \eqref{f30} and Tonelli's theorem,
\begin{align*}
\int_{\RN} u(x)^p \omega\big(\hK{x}\big)dx
&=
\int_{\RN} u(x)^p
\bigg(\int_0^\infty \chi_{B_{r(t)}^K}(x)dt\bigg)dx \\
&=
\int_0^\infty \bigg(\int_{B_{r(t)}^K} u(x)^pdx\bigg)dt \\
&\le
\int_0^\infty \bigg(\int_{B_{r(t)}^K} \big(u^K(x)\big)^pdx\bigg)dt \\
&=
\int_{\RN} \big(u^K(x)\big)^p
\bigg(\int_0^\infty \chi_{B_{r(t)}^K}(x)dt\bigg)dx \\
&=
\int_{\RN} \big(u^K(x)\big)^p \omega\big(\hK{x}\big)dx.
\end{align*}
This proves  \eqref{f39}.

Now we prove the equality case.
Let $K\in\mathcal{K}_{(o)}$, and let $E\subset \RN$ be measurable with
$|E|<\infty$. Let
$\omega:[0,\infty)\to[0,\infty)$ be strictly decreasing. If
\begin{equation}\label{claim-1}
\int_E \omega(\hK{x})dx
=
\int_{E^K} \omega(\hK{x})dx,
\end{equation}
then
$
E=E^K$ a.e. in $\RN.
$
Indeed,
since $|E|=|E^K|$, one has
$
|E\setminus E^K|=|E^K\setminus E|.
$ 
Hence,
\begin{align*}
\int_{E^K}\omega(\hK{x})dx-\int_E\omega(\hK{x})dx
&=
\int_{E^K\setminus E}\omega(\hK{x})dx-\int_{E\setminus E^K}\omega(\hK{x})dx\\
&=
\int_{E^K\setminus E}\big(\omega(\hK{x})-\omega(R)\big)dx +
\int_{E\setminus E^K}\big(\omega(R)-\omega(\hK{x})\big)dx.
\end{align*}
Now, if $x\in E^K\setminus E$, hence $\hK{x}<R$ and then
$
\omega(\hK{x})-\omega(R)>0.
$
If $x\in E\setminus E^K$, hence $\hK{x}\ge R$ and thus
$
\omega(R)-\omega(\hK{x})\ge 0.
$
Therefore, if
$$
\int_E \omega(\hK{x})dx
=
\int_{E^K} \omega(\hK{x})dx,
$$
then both nonnegative integrals above must vanish. 
It follows that
$$
\int_{E^K\setminus E}\bigl(\omega(\hK{x})-\omega(R)\bigr)dx=0
$$
implies $|E^K\setminus E|=0$.
On the other hand, for $x\in E\setminus E^K$, one has
$
\omega(R)-\omega(\hK{x})\ge 0,
$
with strict inequality whenever $\hK{x}>R$, i.e., for
$x\in (E\setminus E^K)\setminus \partial E^K$. Hence, one has
$$
\int_{E\setminus E^K}\bigl(\omega(R)-\omega(\hK{x})\bigr)dx=0
$$
implies
$
|(E\setminus E^K)\setminus \partial E^K|=0.
$
Since $\partial E^K$ has Lebesgue measure zero, it follows that $|E\setminus E^K|=0$.
Since
$
E\Delta E^K:=(E\setminus E^K)\cup(E^K\setminus E)
$
and $(E\setminus E^K)\cap (E^K\setminus E)=\varnothing$, we have
$
|E\Delta E^K|=|E\setminus E^K|+|E^K\setminus E|=0.
$
Therefore,
$
E=E^K \ \ \text{a.e. in }\RN.
$

Now we continue the proof of the equality case.
Assume that $\omega$ is strictly decreasing and that equality holds in \eqref{f39}.
For each $t>0$, let
$
F_t:=\{x\in\RN:u(x)>t\}
$
and
$
F_t^K:=\{x\in\RN:u^K(x)>t\}.
$ Then
the layer-cake representation yields
\begin{align*}
\int_{\RN} u(x)^p \omega(\hK{x})dx
&=
p\int_0^\infty t^{p-1}
\bigg(\int_{F_t}\omega(\hK{x})dx\bigg)dt,\\
\int_{\RN} \big(u^K(x)\big)^p \omega(\hK{x})dx
&=
p\int_0^\infty t^{p-1}
\bigg(\int_{F_t^K}\omega(\hK{x})dx\bigg)dt.
\end{align*}
Hence, the equality in \eqref{f39} implies
$$
\int_{F_t}\omega(\hK{x})dx
=
\int_{F_t^K}\omega(\hK{x})dx
\quad\text{for a.e. }t>0.
$$
By \eqref{claim-1},
one obtains
$
F_t=F_t^K$
a.e. in $\RN$
for a.e. $t>0.$
Equivalently,
$
\chi_{F_t}(x)=\chi_{F_t^K}(x)$
and a.e. $x\in\RN$
and a.e. $t>0$.
Using the layer-cake representation, one has
$$
u(x)=\int_0^\infty \chi_{F_t}(x)dt\
\ \text{and}\ \
u^K(x)=\int_0^\infty \chi_{F_t^K}(x)dt.
$$
Therefore,
$
u(x)=u^K(x)$
for a.e. $x\in\RN.
$

\noindent
($ii$)
Let $v:[0,\infty)\to[0,\infty)$ be non-decreasing.
For $m>0$, let
$
v_m(r)=\min\big\{v(r),m\big\}$ and
$w_m(r)=m-v_m(r).
$
Then $v_m\ge0$, $w_m\ge0$, and $w_m$ is non-increasing because $v_m$ is non-decreasing.
Applying part ($i$) to the non-increasing weight $w_m$, then
$$
\int_{\RN} u(x)^p w_m\big(\hK{x}\big)dx
\le
\int_{\RN} \big(u^K(x)\big)^p w_m\big(\hK{x}\big)dx.
$$
Since $u$ and $u^K$ are equimeasurable, i.e.,
$
\big|\{x\in\RN : u(x)>t\}\big|
=
\big|\{x\in\RN : u^K(x)>t\}\big|$ for  $t>0,
$
in particular,
$$
\int_{\RN} u(x)^pdx
=
\int_{\RN} \big(u^K(x)\big)^pdx,
$$
thus,
$$
\int_{\RN} u(x)^p \Big(m-v_m\big(\hK{x}\big)\Big)dx
\le
\int_{\RN} \big(u^K(x)\big)^p \Big(m-v_m\big(\hK{x}\big)\Big)dx
$$
implies
\begin{align}\label{f36}
\int_{\RN} u(x)^p v_m\big(\hK{x}\big)dx
\ge
\int_{\RN} \big(u^K(x)\big)^p v_m\big(\hK{x}\big)dx.
\end{align}
For each fixed $r\ge 0$, the sequence $\{v_m(r)\}_{m=1}^\infty$ is
non-decreasing and converges to $v(r)$ as $m\to\infty$.
Therefore, using the monotone convergence theorem for \eqref{f36}, one gets
$$
\int_{\RN} u(x)^p v\big(\hK{x}\big)dx
\ge
\int_{\RN} \big(u^K(x)\big)^p v\big(\hK{x}\big)dx.
$$
This proves \eqref{f35}.

The equality statement in ($ii$) follows by applying the equality statement in ($i$) to the strictly decreasing weights $m-v_m$ and then letting $m\to\infty$. We omit no essential details, as the argument is identical to the one above.
\end{proof}

\section{Anisotropic \texorpdfstring{$L^2$}{L2}-Hardy and \texorpdfstring{$L^2$}{L2}-CKN identities on \texorpdfstring{$\RN$}{RN}}\label{s3}
In this section, we will study the anisotropic $L^2$-Hardy and $L^2$-CKN identities with respect to the Minkowski functional $\hK{x}$ for $K\in\Ksc$. 

For $0<R\leq\infty$, let $B_R^K:=\{x\in\mathbb{R}^N:\hK{x}< R\}$. Let $u\in C_c^\infty(B_R^K \setminus\{o\})$ and $H\in C^{1}(0,R)$, where $C^{1}(0,R)$ denotes the space of functions whose first derivative exists and is continuous on $(0,R)$.
Assume $H=AB$ with $A,B\in C^{1}(0,R)$. 
Then, for $\alpha>0$, one has
\begin{align*}
&-\!\int_{B_R^K}\!\!\mathrm{div}\big(H(\hK{x})\sigma_K (x)\big)u^2dx =\int_{B_R^K}\! \!\! H(\hK{x})\sigma_K (x)\cdot 2u\nabla udx 
=2\int_{B_R^K}\!\big(\alpha A \sigma_K(x)\cdot\nabla u\big) \cdot\frac{Bu}{\alpha}dx\\
&\quad\quad\quad \quad \quad =-\alpha^2\int_{B_R^K}A^2\big|\sigma_K (x)\cdot\nabla u\big|^2dx-\frac{1}{\alpha^2}\int_{B_R^K}B^2|u|^2dx+\int_{B_R^K}\Big|\alpha A\sigma_K (x)\cdot\nabla u+\frac{Bu}{\alpha}\Big|^2dx.
\end{align*} Using the notation of the anisotropic radial derivative $\mathcal{R}_K(u)$, one can rearrange the above identity as follows. 
\begin{align}
\!\!\!\alpha^2\!\!\!\int_{B_R^K}\!\!\!\!\! A^2\big|\mathcal{R}_K\!(u)\big|^2\! dx\!+\!\frac{1}{\alpha^2}\!\! \int_{B_R^K}\!\!\!\!B^2u^2\!dx\!=\!\!\! \int_{B_R^K}\!\!\!\!\mathrm{div}\big(\sigma_K(x)H(\hK{x})\!\big)u^2dx+\!\!\!\int_{B_R^K}\!\Big|\alpha A\mathcal{R}_K(u)\!+\!\frac{Bu}{\alpha}\Big|^2\!dx.  \label{1i-1}
\end{align} When $K=B_2^N$, identity \eqref{1i-1} reduces to the Euclidean case \cite{Cazacu}.

\subsection{Anisotropic \texorpdfstring{$L^2$}{L2}-Hardy identities on \texorpdfstring{$\RN$}{RN}}\label{s3.1} 
Taking $\alpha=1$ in \eqref{1i-1}, one gets the following basic anisotropic $L^2$-Hardy identity adapted to $\hK{\cdot}$. 
\begin{theorem}\label{Hardy-identity} Let $K\in\Ksc$ and $0<R\le\infty$. Assume that $A,B,H\in C^1(0,R)$ and $H=AB$. Then, for $u\in C_c^\infty(B_R^K\setminus\{o\})$, one has \begin{align*}
\!\!\!\!\int_{B_R^K}A^2\big|\mathcal R_K(u)\big|^2dx\!=\!\!\int_{B_R^K} \bigg[ \operatorname{div}\big(H(\hK{x})\sigma_K(x)\big)\!-\!B^2 \bigg]u^2dx\!+\!\!\int_{B_R^K} \big|A\mathcal R_K(u)\!+\!Bu\big|^2dx. \end{align*} \end{theorem}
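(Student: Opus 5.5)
The plan is to derive the statement directly from the general identity \eqref{1i-1} by setting $\alpha=1$. Before doing that, I will re-derive \eqref{1i-1} carefully, since that is where the actual content sits. Throughout, fix $u\in C_c^\infty(B_R^K\setminus\{o\})$. Its support is a compact set avoiding the origin, so there are $0<r_1<r_2<R$ with $\operatorname{supp}u\subset\{r_1\le\hK{x}\le r_2\}$.

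The first step is a regularity check. Since $K\in\Ksc$, the function $\hK{\cdot}=h_{K^*}$ is $C^1$ on $\RN\setminus\{o\}$; this is the smooth, strictly convex setting recalled around \eqref{relation-x-y}. It follows that the vector field $F(x)=H(\hK{x})\sigma_K(x)=H(\hK{x})\,x/\hK{x}$ is $C^1$ on a neighbourhood of $\operatorname{supp}u$. Its divergence is therefore a continuous function there, and one can write it explicitly as
\[
\operatorname{div}F=H'(\hK{x})\,\nabla\hK{x}\cdot\sigma_K(x)+H(\hK{x})\,\operatorname{div}\sigma_K(x)=H'(\hK{x})+(N-1)\frac{H(\hK{x})}{\hK{x}},
\]
using $x\cdot\nabla\hK{x}=\hK{x}$. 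This formula is not needed for the identity itself, but it confirms that every integrand in the statement is continuous and compactly supported, so all integrals are finite.

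The second step is the divergence theorem. The field $u^2F$ is $C^1$ with compact support in $B_R^K\setminus\{o\}$, so $\int\operatorname{div}(u^2F)\,dx=0$. This gives
\[
-\int_{B_R^K}\operatorname{div}(F)\,u^2\,dx=\int_{B_R^K}2u\,H\,\sigma_K\cdot\nabla u\,dx=2\int_{B_R^K}\big(A\,\mathcal R_K(u)\big)(Bu)\,dx,
\]
where the last equality uses $H=AB$ with $A$ and $B$ evaluated at $\hK{x}$.

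The third step is to expand a square. Pointwise, $2(A\mathcal R_K(u))(Bu)=|A\mathcal R_K(u)+Bu|^2-A^2|\mathcal R_K(u)|^2-B^2u^2$. Substituting this into the previous display and rearranging gives exactly the identity in the statement; it is also \eqref{1i-1} with $\alpha=1$.

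The only point that needs care is the $C^1$ regularity of $\sigma_K$ and $\hK{\cdot}$ away from the origin, which is what justifies integrating by parts. This is exactly why the hypothesis $K\in\Ksc$ is imposed. The remaining steps are routine algebra.
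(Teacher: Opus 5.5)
Your proposal is correct and follows the paper's route: integrate $\operatorname{div}(u^2H\sigma_K)$ to zero, rewrite the cross term via $H=AB$, and expand the square. The paper carries out the same computation with a general parameter $\alpha$ and then sets $\alpha=1$. Your added checks are also correct and make explicit what the paper leaves implicit: that $\|\cdot\|_K$ is $C^1$ away from the origin (smoothness of $K$ suffices), and the formula $\operatorname{div}(H\sigma_K)=H'(\|x\|_K)+(N-1)H(\|x\|_K)/\|x\|_K$.
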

We first derive several Hardy-type consequences of Theorem \ref{Hardy-identity}
by choosing suitable functions $A$ and $B$.

\begin{corollary}
Let $K\in\Ksc$. Then, for every $u\in C_c^\infty(\RN\setminus\{o\})$, one has \begin{align}\label{basic-Hardy} 
&\int_{\RN}\big|\mathcal{R}_K(u)\big|^2dx=\Big(\frac{N-2}{2}\Big)^2\int_{\RN}\frac{u^2}{\hK{x}^2}dx+\int_{\RN}\Big|\mathcal{R}_K(u)+\Big(\frac{N-2}{2}\Big)\frac{u}{\hK{x}}\Big|^2dx.  \end{align}  \end{corollary}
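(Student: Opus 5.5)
We apply Theorem \ref{Hardy-identity} with $R=\infty$, $A\equiv 1$ and $B(r)=\frac{N-2}{2}\,r^{-1}$, so that $H(r)=AB=\frac{N-2}{2}\,r^{-1}$. All three functions lie in $C^1(0,\infty)$, and since $u\in C_c^\infty(\RN\setminus\{o\})$, every integral is over a compact set away from the origin. The identity of Theorem \ref{Hardy-identity} then reads
$$
\int_{\RN}|\mathcal R_K(u)|^2dx=\int_{\RN}\Big[\operatorname{div}\Big(\tfrac{N-2}{2}\hK{x}^{-1}\sigma_K(x)\Big)-\Big(\tfrac{N-2}{2}\Big)^2\hK{x}^{-2}\Big]u^2dx+\int_{\RN}\Big|\mathcal R_K(u)+\tfrac{N-2}{2}\tfrac{u}{\hK{x}}\Big|^2dx .
$$
So everything reduces to computing the divergence.

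The key step is the pointwise computation of $\operatorname{div}\big(x\,\hK{x}^{-2}\big)$ on $\RN\setminus\{o\}$. Since $K\in\Ksc$, the function $\hK{\cdot}$ is $C^1$ away from the origin, and by \eqref{relation-x-y} (Euler's relation for the $1$-homogeneous function $\hK{\cdot}$) we have $x\cdot\nabla\hK{x}=\hK{x}$. Hence
$$
\operatorname{div}\big(x\,\hK{x}^{-2}\big)=N\hK{x}^{-2}-2\hK{x}^{-3}\,x\cdot\nabla\hK{x}=(N-2)\hK{x}^{-2}.
$$
Multiplying by $\frac{N-2}{2}$ gives $\operatorname{div}(H\sigma_K)=\frac{(N-2)^2}{2}\hK{x}^{-2}$. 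The bracket in the identity therefore equals
$$
\Big(\tfrac{(N-2)^2}{2}-\tfrac{(N-2)^2}{4}\Big)\hK{x}^{-2}=\Big(\tfrac{N-2}{2}\Big)^2\hK{x}^{-2},
$$
which yields \eqref{basic-Hardy}.

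The only delicate point is regularity. The derivation of \eqref{1i-1} uses integration by parts against the vector field $H(\hK{x})\sigma_K(x)$, which requires $\hK{\cdot}\in C^1(\RN\setminus\{o\})$. This holds because $K$ is smooth and strictly convex, so $h_{K^*}$ is differentiable away from the origin. Only first derivatives of $\hK{\cdot}$ enter the divergence computation above, so no second-order smoothness of $\partial K$ is needed. Moreover, no boundary terms arise, since $u$ has compact support in $\RN\setminus\{o\}$.
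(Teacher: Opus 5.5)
Your proposal is correct and follows the same route as the paper: substitute $A\equiv 1$ and $B=\frac{N-2}{2}\hK{x}^{-1}$ into Theorem \ref{Hardy-identity} and simplify the bracket. The paper only states the bracket's value, while you also carry out the divergence computation $\operatorname{div}\big(x\hK{x}^{-2}\big)=(N-2)\hK{x}^{-2}$ via Euler's relation $x\cdot\nabla\hK{x}=\hK{x}$, and you justify the regularity needed for the integration by parts.
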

\begin{proof} 
Taking $A=1$ and $B=\big(\frac{N-2}{2}\big)\frac{1}{\hK{x}}$ in Theorem \ref{Hardy-identity}, one has $$ \operatorname{div}\big(H(\hK{x})\sigma_K(x)\big)-B(\hK{x})^2 = \Big(\frac{N-2}{2}\Big)^2\frac1{\hK{x}^2}. 
$$ Hence, Theorem \ref{Hardy-identity} gives \eqref{basic-Hardy}. \end{proof}
If $K=B_2^N$, then \eqref{basic-Hardy} reduces to the classical $L^2$-Hardy identity \cite{Cazacu}.

\begin{corollary}
Let $K\in\Ksc$ and $\lambda\in\mathbb R$. Then, for every $u\in C_c^\infty(\RN\setminus\{o\})$, one has 
\begin{align}\label{f60}
\int_{\RN}\!\!\frac{\big|\mathcal{R}_K(u)\big|^2}{\hK{x}^{\lambda}}dx\!=\!\Big(\frac{N\!-\!\lambda\!-\!2}{2}\Big)^2\int_{\RN}\!\frac{u^2}{\hK{x}^{2+\lambda}}dx+\int_{\RN}\!\Bigg|\frac{\mathcal{R}_K(u)}{\hK{x}^{\frac{\lambda}{2}}}+\Big(\frac{N\!-\!\lambda\!-\!2}{2}\Big)\frac{u}{\hK{x}^{\frac{\lambda}{2}+1}}\Bigg|^2dx.
\end{align} 
\end{corollary}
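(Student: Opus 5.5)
Apply Theorem \ref{Hardy-identity} with $R=\infty$, exactly as in the proof of \eqref{basic-Hardy}, choosing
$$
A(r)=r^{-\frac{\lambda}{2}},\qquad B(r)=\Big(\frac{N-\lambda-2}{2}\Big)r^{-\frac{\lambda}{2}-1},\qquad H(r)=A(r)B(r)=\Big(\frac{N-\lambda-2}{2}\Big)r^{-\lambda-1}.
$$
All three functions lie in $C^1(0,\infty)$. Once $A$ and $B$ are substituted into the identity, its left-hand side and its remainder term are literally the left-hand side and the last integral of \eqref{f60}. What remains is to compute the potential term $\operatorname{div}\big(H(\hK{x})\sigma_K(x)\big)-B^2$.

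The key computation is the divergence of a field of the form $g(\hK{x})\,x$. Since
$$
H(\hK{x})\sigma_K(x)=\Big(\frac{N-\lambda-2}{2}\Big)\hK{x}^{-\lambda-2}\,x,
$$
the product rule gives
$$
\operatorname{div}\big(\hK{x}^{-\lambda-2}x\big)=N\hK{x}^{-\lambda-2}+(-\lambda-2)\hK{x}^{-\lambda-3}\,x\cdot\nabla\hK{x}.
$$
This uses that $\hK{\cdot}$ is differentiable away from $o$, which holds because $K\in\Ksc$. By \eqref{relation-x-y} (Euler's identity for the $1$-homogeneous function $h_{K^*}$), $x\cdot\nabla\hK{x}=\hK{x}$. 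Hence
$$
\operatorname{div}\big(H(\hK{x})\sigma_K(x)\big)=\Big(\frac{N-\lambda-2}{2}\Big)(N-\lambda-2)\,\hK{x}^{-\lambda-2}.
$$
This is the only place where the anisotropy enters, and it is harmless: the possible non-evenness of $\hK{\cdot}$ plays no role, since only homogeneity is used.

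Subtracting $B^2=\big(\frac{N-\lambda-2}{2}\big)^2\hK{x}^{-\lambda-2}$ gives
$$
\Big[2\Big(\frac{N-\lambda-2}{2}\Big)^2-\Big(\frac{N-\lambda-2}{2}\Big)^2\Big]\hK{x}^{-\lambda-2}=\Big(\frac{N-\lambda-2}{2}\Big)^2\hK{x}^{-\lambda-2},
$$
which is the first term on the right of \eqref{f60}.

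There is no real obstacle. The only point to check is the justification of the integration by parts underlying Theorem \ref{Hardy-identity}. This is automatic here because $u$ has compact support in $\RN\setminus\{o\}$, so every weight $\hK{x}^{s}$ is smooth and bounded on $\operatorname{supp}u$ and no boundary terms arise, whatever the sign of $\lambda$. As a consistency check, taking $\lambda=0$ recovers \eqref{basic-Hardy}.
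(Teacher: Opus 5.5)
Your proposal is correct and follows essentially the same route as the paper: you make the same choice $A=\hK{x}^{-\frac{\lambda}{2}}$, $B=\frac{N-\lambda-2}{2}\hK{x}^{-\frac{\lambda}{2}-1}$ in Theorem \ref{Hardy-identity}. You also spell out the divergence computation via Euler's identity $x\cdot\nabla\hK{x}=\hK{x}$, which the paper states without detail.
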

\begin{proof} 
Taking $A=\hK{x}^{-\frac{\lambda}{2}}$ and $B=\frac{N-\lambda-2}{2}\hK{x}^{-\frac{\lambda}{2}-1}$ in Theorem \ref{Hardy-identity}, one obtains $$ \operatorname{div}\big(H(\hK{x})\sigma_K(x)\big)-B(\hK{x})^2 = \Big(\frac{N-\lambda-2}{2}\Big)^2 \frac1{\hK{x}^{\lambda+2}}.$$
Then Theorem \ref{Hardy-identity} gives \eqref{f60}. 
\end{proof}
Identity \eqref{f60} depends on the parameter $\lambda$, where we can use it to derive the improved anisotropic Hardy and Heisenberg-type inequalities with the power-type weights.

\begin{corollary}
Let $K\in\Ksc$. Then, for every $u\in C_c^\infty(\RN\setminus\{o\})$, one has \begin{align}\label{E1-1}
\int_{\RN}\big|\mathcal{R}_K
(u)\big|^2dx=\int_{\RN}(N-\hK{x}^2)u^2dx+\int_{\RN}\big|\mathcal{R}_K(u)+\hK{x}u\big|^2dx.
\end{align}  
\end{corollary}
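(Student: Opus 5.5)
We apply Theorem \ref{Hardy-identity} with $R=\infty$, choosing $A\equiv 1$ and $B(r)=r$. Then $H(r)=A(r)B(r)=r$, and both $A$ and $B$ are smooth on $(0,\infty)$. The remainder term in Theorem \ref{Hardy-identity} becomes
\[
\int_{\RN}\big|\mathcal R_K(u)+\hK{x}u\big|^2dx,
\]
which is exactly the remainder in \eqref{E1-1}. It therefore suffices to show that
\[
\operatorname{div}\big(H(\hK{x})\sigma_K(x)\big)-B(\hK{x})^2=N-\hK{x}^2
\quad\text{on }\RN\setminus\{o\}.
\]

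This divergence is simple to compute. Since $H(\hK{x})\sigma_K(x)=\hK{x}\cdot\frac{x}{\hK{x}}=x$, the vector field is just the identity, so
\[
\operatorname{div}\big(H(\hK{x})\sigma_K(x)\big)=\operatorname{div}x=N .
\]
Subtracting $B(\hK{x})^2=\hK{x}^2$ gives the coefficient $N-\hK{x}^2$. Substituting this into Theorem \ref{Hardy-identity} yields \eqref{E1-1}.

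For comparison, the general divergence formula used in the earlier corollaries can be checked as follows. Positive homogeneity of $\hK{\cdot}$ and \eqref{relation-x-y} give $x\cdot\nabla\hK{x}=\hK{x}$. Hence
\[
\operatorname{div}\big(H(\hK{x})\,x/\hK{x}\big)=H'(\hK{x})+(N-1)\frac{H(\hK{x})}{\hK{x}} .
\]
For $H(r)=r$ this gives $1+(N-1)=N$, consistent with the direct computation.

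No real obstacle arises. All integrals are finite because $u$ is compactly supported away from the origin, and the integration by parts underlying Theorem \ref{Hardy-identity} is already justified there, so no regularity issue remains at $o$. The only point to check is that $\sigma_K$ is smooth away from the origin. This holds because $K\in\Ksc$, which makes $\hK{\cdot}$ differentiable on $\RN\setminus\{o\}$, and in any case the field here collapses to $x$.
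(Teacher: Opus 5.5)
Your proposal is correct and is exactly the paper's argument: take $A=1$ and $B=\hK{x}$ in Theorem \ref{Hardy-identity}, so that $\operatorname{div}\big(H(\hK{x})\sigma_K(x)\big)-B^2=N-\hK{x}^2$. Your remark that $H(\hK{x})\sigma_K(x)=x$ makes the divergence computation trivial, so it needs no differentiability of $\hK{\cdot}$ at all.
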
 
\begin{proof} Taking $A=1$ and $B=\hK{x}$ in Theorem \ref{Hardy-identity}, one has $$ \operatorname{div}\big(H(\hK{x})\sigma_K(x)\big)-B(\hK{x})^2 = N-\hK{x}^2.$$ Then Theorem \ref{Hardy-identity} gives \eqref{E1-1}. 
\end{proof}

\begin{corollary}
Let $K\in\Ksc$. Assume that $b+1-a>0$ and $2b\le N-2$. Then, for every $u\in C_c^\infty(\RN\setminus\{o\})$, one has 
\begin{align}\label{f48}
\!\!\!\!\!\!\!\int_{\RN}\!\bigg|\frac{\mathcal{R}_K(u)}{\hK{x}^{b}}\bigg|^2dx\!+\!\!\!\int_{\RN}\!\frac{u^2}{\hK{x}^{2a}}dx\!-\!(N\!-\!1\!-\!a\!-\!b)\!\int_{\RN}\!\frac{u^2}{\hK{x}^{a+b+1}}dx\!=\!\!\!\int_{\RN}\!\bigg|\frac{\mathcal{R}_K(u)}{\hK{x}^{b}}\!+\!\!\frac{u}{\hK{x}^a}\bigg|^2dx. 
\end{align} 
\end{corollary}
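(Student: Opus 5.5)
The plan is to obtain \eqref{f48} directly from Theorem \ref{Hardy-identity} with $R=\infty$, exactly as in the preceding corollaries, by choosing
$$
A(r)=r^{-b},\qquad B(r)=r^{-a},\qquad H(r)=A(r)B(r)=r^{-a-b},\qquad r=\hK{x}.
$$
These functions lie in $C^1(0,\infty)$. Since $u\in C_c^\infty(\RN\setminus\{o\})$ is supported in a compact set away from the origin, every integral in the identity is finite and no boundary terms arise. With these choices, $A\mathcal R_K(u)=\mathcal R_K(u)/\hK{x}^{b}$ and $Bu=u/\hK{x}^{a}$, which are precisely the two terms inside the remainder on the right-hand side of \eqref{f48}.

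The only computation is the divergence term. I will write $H(\hK{x})\sigma_K(x)=x\,\hK{x}^{-a-b-1}$ and apply the product rule to get
$$
\operatorname{div}\big(x\,\hK{x}^{-a-b-1}\big)=N\hK{x}^{-a-b-1}-(a+b+1)\hK{x}^{-a-b-2}\,x\cdot\nabla\hK{x}.
$$
Because $K\in\Ksc$, the functional $\hK{\cdot}$ is differentiable away from $o$. Euler's relation for the $1$-homogeneous function $\hK{\cdot}$, recorded in \eqref{relation-x-y} as $x\cdot\nabla\hK{x}=\hK{x}$, then gives
$$
\operatorname{div}\big(H(\hK{x})\sigma_K(x)\big)=(N-1-a-b)\hK{x}^{-a-b-1}.
$$
This step uses only the positive homogeneity of $\hK{\cdot}$ and never its evenness, so the possible lack of origin-symmetry of $K$ causes no difficulty.

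Substituting into Theorem \ref{Hardy-identity} yields
$$
\int_{\RN}\frac{|\mathcal R_K(u)|^2}{\hK{x}^{2b}}dx=\int_{\RN}\Big[(N-1-a-b)\hK{x}^{-a-b-1}-\hK{x}^{-2a}\Big]u^2dx+\int_{\RN}\bigg|\frac{\mathcal R_K(u)}{\hK{x}^{b}}+\frac{u}{\hK{x}^{a}}\bigg|^2dx.
$$
Moving the $u^2$ terms to the left-hand side gives \eqref{f48}.

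There is no real obstacle; the only point needing care is the divergence computation via Euler's identity. The hypotheses $b+1-a>0$ and $2b\le N-2$ are not used in deriving the identity itself, which holds for all $(a,b)\in\R^2$. They single out the parameter region $\mathcal C_1$ in which \eqref{f48} will later be used to obtain the sharp constant and the extremals.
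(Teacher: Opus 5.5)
Your proposal is correct and follows the paper's proof: the paper also takes $A=\hK{x}^{-b}$ and $B=\hK{x}^{-a}$ in Theorem~\ref{Hardy-identity}, computes $\operatorname{div}\big(H(\hK{x})\sigma_K(x)\big)=(N-1-a-b)\hK{x}^{-a-b-1}$, and rearranges. You are also right that your Euler-relation computation uses only positive homogeneity, not evenness, and that the hypotheses on $(a,b)$ are not needed for the identity itself.
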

\begin{proof} 
Taking $A=\hK{x}^{-b}$ and $B=\hK{x}^{-a}$ in Theorem \ref{Hardy-identity}, one has $$ \operatorname{div}\big(H(\hK{x})\sigma_K(x)\big) = (N-1-a-b)\hK{x}^{-a-b-1}.$$ Thus, Theorem \ref{Hardy-identity} gives \eqref{f48}. 
\end{proof}

\begin{corollary}
Let $K\in\Ksc$. Assume that $b+1-a<0$ and $2b\ge N-2$. Then, for every $u\in C_c^\infty(\RN\setminus\{o\})$, one has 
\begin{align}\label{f76}
\!\!\!\int_{\RN}\!\!\bigg|\frac{\mathcal{R}_K(u)}{\hK{x}^{b}}\bigg|^2dx\!+\!\!\int_{\RN}\!\!\frac{u^2}{\hK{x}^{2a}}dx\!-\!(a\!+\!b\!+\!1\!-\!N)\!\int_{\RN}\!\!\frac{u^2}{\hK{x}^{a+b+1}}dx\!=\!\!\int_{\RN}\!\!\bigg|\!-\!\frac{\mathcal{R}_K(u)}{\hK{x}^{b}}\!+\!\frac{u}{\hK{x}^a}\bigg|^2dx. 
\end{align}  
\end{corollary}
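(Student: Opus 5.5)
We apply Theorem \ref{Hardy-identity} with $R=\infty$, exactly as in the proof of \eqref{f48}, but with the sign of the first factor reversed. Concretely, we take
$$
A=-\hK{x}^{-b},\qquad B=\hK{x}^{-a},\qquad H=AB=-\hK{x}^{-a-b}.
$$
With this choice $A^2|\mathcal R_K(u)|^2=\hK{x}^{-2b}|\mathcal R_K(u)|^2$, and $B^2u^2=\hK{x}^{-2a}u^2$. The remainder term becomes
$$
\big|A\mathcal R_K(u)+Bu\big|^2=\bigg|-\frac{\mathcal R_K(u)}{\hK{x}^{b}}+\frac{u}{\hK{x}^a}\bigg|^2,
$$
which is precisely the integrand on the right-hand side of \eqref{f76}.

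The only computation needed is the divergence term. Write $r=\hK{x}$ and $\sigma_K(x)=x/r$. Then $H(r)\sigma_K(x)=x\,H(r)/r$, so
$$
\operatorname{div}\big(H(r)\sigma_K(x)\big)=N\frac{H(r)}{r}+\Big(\frac{H(r)}{r}\Big)'\,x\cdot\nabla\hK{x}.
$$
By \eqref{relation-x-y}, $x\cdot\nabla\hK{x}=\hK{x}=r$. This is the step where smoothness and strict convexity of $K$ are used, and it holds without any evenness assumption. Hence
$$
\operatorname{div}\big(H(r)\sigma_K(x)\big)=(N-1)\frac{H}{r}+H'(r).
$$
For the power $H=-r^{-a-b}$ this equals
$$
-(N-1-a-b)\,r^{-a-b-1}=(a+b+1-N)\,\hK{x}^{-a-b-1}.
$$

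Substituting into Theorem \ref{Hardy-identity} gives
$$
\int\frac{|\mathcal R_K(u)|^2}{\hK{x}^{2b}}
=\int\Big[(a+b+1-N)\hK{x}^{-a-b-1}-\hK{x}^{-2a}\Big]u^2+\int\bigg|-\frac{\mathcal R_K(u)}{\hK{x}^{b}}+\frac{u}{\hK{x}^a}\bigg|^2.
$$
Moving the $\hK{x}^{-2a}u^2$ term to the left-hand side yields \eqref{f76}.

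Some remarks on the details:
\begin{itemize}
\item All integrals are finite because $u\in C_c^\infty(\RN\setminus\{o\})$. Its support avoids the origin, where the weights are singular.
\item The integration by parts behind Theorem \ref{Hardy-identity} produces no boundary terms, for the same reason.
\item $H\in C^1(0,\infty)$ holds trivially, since $H$ is a power of $r$.
\end{itemize}

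The hypotheses $b+1-a<0$ and $2b\ge N-2$ are not needed for the identity itself. They only ensure that the coefficient $a+b+1-N$ is positive in the later application, since $a+b+1-N>2b+2-N\ge0$. The expected main obstacle is just the sign bookkeeping in the divergence, and this is settled by the radial formula above.
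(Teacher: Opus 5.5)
Your proposal is correct and follows the paper's proof exactly. The paper also takes $A=-\|x\|_K^{-b}$ and $B=\|x\|_K^{-a}$ in Theorem~\ref{Hardy-identity} and uses $\operatorname{div}\big(H(\|x\|_K)\sigma_K(x)\big)=(a+b+1-N)\|x\|_K^{-a-b-1}$, but it states this value without computation. Your radial formula $\operatorname{div}(H(r)\sigma_K)=(N-1)H/r+H'(r)$, obtained from Euler's relation $x\cdot\nabla\|x\|_K=\|x\|_K$, fills that step in; the relation needs only differentiability of $\|\cdot\|_K$, i.e.\ smoothness of $K$, not strict convexity. You are also right that the hypotheses on $(a,b)$ play no role in the identity itself.
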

\begin{proof} Taking $A=-\hK{x}^{-b}$ and $B=\hK{x}^{-a}$ in Theorem \ref{Hardy-identity}, one has $$ \operatorname{div}\big(H(\hK{x})\sigma_K(x)\big) = (a+b+1-N)\hK{x}^{-a-b-1}. $$ Then Theorem \ref{Hardy-identity} gives \eqref{f76}. 
\end{proof}

\begin{corollary}
Let $K\in\Ksc$. Assume that $b+1-a<0$ and $2b\leq N-2$. Then, for every $u\in C_c^\infty(\RN\setminus\{o\})$, one has 
\begin{align}\label{f49}
&\int_{\RN}\frac{\big|\mathcal{R}_K(u)\big|^2}{\hK{x}^{2b}}dx+\int_{\RN}\frac{u^2}{\hK{x}^{2a}}dx-(N-3b+a-3)\int_{\RN}\frac{u^2}{\hK{x}^{a+b+1}}dx\notag\\
&=\int_{\RN}\bigg|-\frac{\mathcal{R}_K(u)}{\hK{x}^{b}}+\bigg(\frac{1}{\hK{x}^{a}}-\frac{N-2b-2}{\hK{x}^{b+1}}\bigg)u\bigg|^2dx.
\end{align}  
\end{corollary}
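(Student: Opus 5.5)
The plan is to apply Theorem \ref{Hardy-identity} with $R=\infty$, exactly as in the proofs of \eqref{f48} and \eqref{f76}, but with the choice
$$
A(r)=-r^{-b},\qquad B(r)=r^{-a}-(N-2b-2)\,r^{-b-1},\qquad r=\hK{x}.
$$
With this choice the remainder term $|A\mathcal R_K(u)+Bu|^2$ in Theorem \ref{Hardy-identity} is literally the integrand on the right-hand side of \eqref{f49}. The left-hand side $\int A^2|\mathcal R_K(u)|^2dx$ is $\int \hK{x}^{-2b}|\mathcal R_K(u)|^2dx$. Everything therefore reduces to computing $\operatorname{div}(H(\hK{x})\sigma_K(x))-B^2$ for $H=AB$.

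The first step is a general divergence formula. For $K\in\Ksc$ and $x\neq o$, using \eqref{relation-x-y} (namely $x\cdot\nabla\hK{x}=\hK{x}$), I will check that
$$
\operatorname{div}\sigma_K(x)=\frac{N}{\hK{x}}-\frac{x\cdot\nabla\hK{x}}{\hK{x}^2}=\frac{N-1}{\hK{x}},
\qquad
\nabla\big(H(\hK{x})\big)\cdot\sigma_K(x)=H'(\hK{x}).
$$
Hence $\operatorname{div}(H(\hK{x})\sigma_K)=H'(r)+\frac{N-1}{r}H(r)$. In particular $\operatorname{div}(r^{-k}\sigma_K)=(N-1-k)r^{-k-1}$, which is presumably the computation already behind the previous corollaries.

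The second step is the algebra. Here $H=AB=-r^{-a-b}+(N-2b-2)r^{-2b-1}$, so
$$
\operatorname{div}(H\sigma_K)=-(N-1-a-b)r^{-a-b-1}+(N-2b-2)^2r^{-2b-2}.
$$
On the other hand,
$$
B^2=r^{-2a}-2(N-2b-2)r^{-a-b-1}+(N-2b-2)^2r^{-2b-2}.
$$
Subtracting, the $r^{-2b-2}$ terms cancel, which is the point of the shifted choice of $B$. This leaves
$$
\operatorname{div}(H\sigma_K)-B^2=(N-3b+a-3)\,r^{-a-b-1}-r^{-2a}.
$$
Inserting this into Theorem \ref{Hardy-identity} and moving $\int \hK{x}^{-2a}u^2dx$ to the left gives \eqref{f49}.

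The only step that needs care is the cancellation and the coefficient bookkeeping, specifically that $-(N-1-a-b)+2(N-2b-2)=N-3b+a-3$. Since $u$ has compact support away from $o$ and $A,B\in C^1(0,\infty)$, all integrals are finite. The sign hypotheses $b+1-a<0$ and $2b\le N-2$ are not needed for the identity itself; they only matter later, for the sign of the constant and for attainability.
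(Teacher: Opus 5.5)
Your proof is correct and follows the paper's route exactly: apply Theorem \ref{Hardy-identity} with $R=\infty$, $A=-\|x\|_K^{-b}$ and $B=\|x\|_K^{-a}-(N-2b-2)\|x\|_K^{-b-1}$. Your bookkeeping is more precise than the paper's one-line proof. The paper writes $\operatorname{div}(H\sigma_K)=(N-3b+a-3)\|x\|_K^{-a-b-1}$, but the quantity actually needed, which you compute correctly, is $\operatorname{div}(H\sigma_K)-B^2=(N-3b+a-3)\|x\|_K^{-a-b-1}-\|x\|_K^{-2a}$, after the $(N-2b-2)^2\|x\|_K^{-2b-2}$ terms cancel.
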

\begin{proof} Taking $A=-\hK{x}^{-b}$ and $B=\hK{x}^{-a}-(N-2b-2)\hK{x}^{-b-1}$ in Theorem \ref{Hardy-identity}, one has $$ \operatorname{div}\big(H(\hK{x})\sigma_K(x)\big) = \frac{N-3b+a-3}{\hK{x}^{a+b+1}}.$$ Then Theorem \ref{Hardy-identity} gives \eqref{f49}. 
\end{proof}

\begin{corollary}
Let $K\in\Ksc$. Assume that $b+1-a>0$ and $2b\geq N-2$. Then, for every $u\in C_c^\infty(\RN\setminus\{o\})$, one has 
\begin{align}\label{f50}
&\int_{\RN}\frac{\big|\mathcal{R}_K(u)\big|^2}{\hK{x}^{2b}}dx+\int_{\RN}\frac{u^2}{\hK{x}^{2a}}dx-(3b-a+3-N)\int_{\RN}\frac{u^2}{\hK{x}^{a+b+1}}dx\notag\\
&=\int_{\RN}\bigg|-\frac{\mathcal{R}_K(u)}{\hK{x}^{b}}+u\bigg(\frac{-1}{\hK{x}^{a}}+\frac{2b+2-N}{\hK{x}^{b+1}}\bigg)\bigg|^2dx.
\end{align}
\end{corollary}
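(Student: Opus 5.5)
The plan is to obtain \eqref{f50}, like the preceding corollaries, as a direct specialization of Theorem \ref{Hardy-identity} with $R=\infty$. Write $r=\hK{x}$ and $c:=2b+2-N$, and choose
$$
A=-r^{-b},\qquad B=-r^{-a}+c\,r^{-b-1},\qquad H=AB=r^{-a-b}-c\,r^{-2b-1}.
$$
Then $A\mathcal R_K(u)+Bu=-\frac{\mathcal R_K(u)}{\hK{x}^{b}}+u\big(\frac{-1}{\hK{x}^{a}}+\frac{2b+2-N}{\hK{x}^{b+1}}\big)$, which is exactly the integrand of the remainder term in \eqref{f50}. Also $A^2=r^{-2b}$, which gives the first integral on the left. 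Since $u\in C_c^\infty(\RN\setminus\{o\})$, the functions $A,B,H$ are $C^1$ on $(0,\infty)$ and all integrals are finite. So everything reduces to computing $\operatorname{div}\big(H(\hK{x})\sigma_K(x)\big)-B^2$.

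For the divergence, I would use a general formula valid for $K\in\Ksc$ and $H\in C^1(0,\infty)$:
$$
\operatorname{div}\big(H(\hK{x})\sigma_K(x)\big)=H'(r)+\frac{N-1}{r}H(r).
$$
This follows from $\nabla\hK{x}\cdot\sigma_K(x)=1$, which is \eqref{relation-x-y}, together with
$$
\operatorname{div}\sigma_K=\frac{N}{r}-\frac{x\cdot\nabla\hK{x}}{r^2}=\frac{N-1}{r}.
$$
Applying it to the two terms of $H$:
\begin{align*}
\operatorname{div}\big(r^{-a-b}\sigma_K\big)&=(N-1-a-b)\,r^{-a-b-1},\\
\operatorname{div}\big(-c\,r^{-2b-1}\sigma_K\big)&=-c\,(N-2-2b)\,r^{-2b-2}=c^2r^{-2b-2}.
\end{align*}
Expanding the square gives $B^2=r^{-2a}-2c\,r^{-a-b-1}+c^2r^{-2b-2}$.

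Subtracting, the $c^2r^{-2b-2}$ terms cancel, and
$$
\operatorname{div}\big(H\sigma_K\big)-B^2=(N-1-a-b+2c)\,r^{-a-b-1}-r^{-2a}=(3b-a+3-N)\,r^{-a-b-1}-r^{-2a}.
$$
Substituting this into Theorem \ref{Hardy-identity} and moving the terms $\int u^2\hK{x}^{-2a}\,dx$ and $(3b-a+3-N)\int u^2\hK{x}^{-a-b-1}\,dx$ to the left-hand side yields \eqref{f50}.

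The only points needing care are the divergence formula, which uses smoothness of $K$ through \eqref{relation-x-y}, and the cancellation of the $r^{-2b-2}$ terms. That cancellation is the reason for the specific choice of the coefficient $c=2b+2-N$. The hypotheses $b+1-a>0$ and $2b\ge N-2$ play no role in the identity itself; they only guarantee that the constant $3b-a+3-N$ is positive, which matters in the later applications.
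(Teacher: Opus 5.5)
Your proposal is correct and follows the paper's proof: the paper also specializes Theorem \ref{Hardy-identity} with $A=-\hK{x}^{-b}$ and $B=-\hK{x}^{-a}+(2b+2-N)\hK{x}^{-b-1}$ and rearranges. Your computation is in fact more careful than the paper's one line, which records $\operatorname{div}(H\sigma_K)$ as $(3b+3-a-N)\hK{x}^{-a-b-1}$. As you show, the actual divergence is $(N-1-a-b)\hK{x}^{-a-b-1}+(2b+2-N)^2\hK{x}^{-2b-2}$, and only $\operatorname{div}(H\sigma_K)-B^2$, which equals $(3b-a+3-N)\hK{x}^{-a-b-1}-\hK{x}^{-2a}$, gives the stated identity.
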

\begin{proof} Taking $A=-\hK{x}^{-b}$ and $B=-\hK{x}^{-a}+(2b+2-N)\hK{x}^{-b-1}$ in Theorem \ref{Hardy-identity}, one has $$ \operatorname{div}\big(H(\hK{x})\sigma_K(x)\big) = \frac{3b+3-a-N}{\hK{x}^{a+b+1}}.$$ Then Theorem \ref{Hardy-identity} gives \eqref{f50}. \end{proof}

\subsection{Anisotropic \texorpdfstring{$L^2$}{L2}-CKN identities on \texorpdfstring{$\RN$}{RN}} In this subsection, we derive some anisotropic $L^2$-CKN identities on $\RN$. To this end, let us optimize the left hand side in \eqref{1i-1} by letting   $\alpha>0$ such that
\begin{align*} 
\alpha^2\|A\mathcal{R}_K(u)\|^2=
\alpha^2\int_{B_R^K}A^2\big|\mathcal{R}_K(u)\big|^2dx=\frac{1}{\alpha^2}\int_{B_R^K}B^2u^2dx
=\frac{\|Bu\|^2}{\alpha^2} .\end{align*} Hence,  $\alpha=\big(\frac{\|Bu\|}{\|A\mathcal{R}_K(u)\|}\big)^{\frac{1}{2}}$. Thus, one has the following anisotropic $L^2$-CKN identities.
\begin{theorem}\label{CKN-identity} Let $K\in\mathcal K^{ss}_{(o)}$ and $0<R\le\infty$. Assume that $A,B,H\in C^1(0,R)$ and $H=AB$. Then, for $u\in C_c^\infty(B_R^K\setminus\{o\})\setminus\{0\}$ such that $$ \|A\mathcal R_K(u)\|>0 \quad\text{and}\quad \|Bu\|>0, $$ one has \begin{align}\label{li-2}
\!\!\!\!\!\!\bigg(\int_{B_R^K} \big|A\mathcal{R}_K(u)\big|^2\!dx \bigg)^{\frac{1}{2}}
\bigg(\int_{B_R^K}B^2 u^2dx\bigg)^{\frac{1}{2}}&\!=\!\frac{1}{2}\int_{B_R^K}\mathrm{div}\Big(H(\hK{x})\sigma_K(x)\Big)u^2dx \nonumber \\ &\!+\frac{1}{2}\int_{B_R^K}\Bigg|\frac{\|Bu\|^{\frac{1}{2}}A\mathcal{R}_K(u)}{\|A\mathcal{R}_K(u)\|^{\frac{1}{2}}}\!+\!\frac{\|A\mathcal{R}_K(u)\|^{\frac{1}{2}}Bu}{\|Bu\|^{\frac{1}{2}}}\Bigg|^2dx.
\end{align} \end{theorem}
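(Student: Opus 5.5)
The plan is to specialize the parametrized identity \eqref{1i-1} to the balancing value of $\alpha$ and then divide by two. Since $u\in C_c^\infty(B_R^K\setminus\{o\})$, the support of $u$ is a compact set avoiding the origin and the boundary of $B_R^K$. On that set $\hK{x}$ takes values in a compact subinterval of $(0,R)$. The integration by parts behind \eqref{1i-1} is therefore legitimate: $H(\hK{x})\sigma_K(x)$ is $C^1$ near the support because $K\in\Ksc$ makes $\hK{\cdot}$ differentiable away from $o$, and there are no boundary terms. All integrals involved are finite.

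\textbf{Step 1.} I take \eqref{1i-1} as given for every $\alpha>0$. Its only ingredients are the divergence theorem and the algebraic identity
\[
2XY=-X^2-Y^2+(X+Y)^2 \quad\text{with } X=\alpha A\mathcal R_K(u),\ Y=Bu/\alpha .
\]

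\textbf{Step 2.} Using the hypotheses $\|A\mathcal R_K(u)\|>0$ and $\|Bu\|>0$, I choose
\[
\alpha=\Big(\frac{\|Bu\|}{\|A\mathcal R_K(u)\|}\Big)^{1/2}\in(0,\infty).
\]
With this choice,
\[
\alpha^2\|A\mathcal R_K(u)\|^2=\|Bu\|\,\|A\mathcal R_K(u)\|=\frac{\|Bu\|^2}{\alpha^2}.
\]
Hence the left-hand side of \eqref{1i-1} equals $2\|A\mathcal R_K(u)\|\,\|Bu\|$. This is the product of the square roots of the two integrals appearing in \eqref{li-2}.

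\textbf{Step 3.} In the remainder term I substitute
\[
\alpha A\mathcal R_K(u)=\frac{\|Bu\|^{1/2}A\mathcal R_K(u)}{\|A\mathcal R_K(u)\|^{1/2}},\qquad \frac{Bu}{\alpha}=\frac{\|A\mathcal R_K(u)\|^{1/2}Bu}{\|Bu\|^{1/2}}.
\]
Dividing the whole identity by $2$ then gives \eqref{li-2} exactly.

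There is no genuine obstacle here; the argument is a one-line optimization. The only point needing care is that $\alpha$ must be well defined and positive, which is exactly why the nonvanishing hypotheses are imposed. I would also note that the divergence term is computed with the same convention as in Theorem \ref{Hardy-identity}.
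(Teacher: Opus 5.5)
Your proposal is correct and follows the paper's own argument: it takes the $\alpha$-parametrized identity \eqref{1i-1}, chooses $\alpha=\big(\|Bu\|/\|A\mathcal R_K(u)\|\big)^{1/2}$ so that the two terms on the left are equal, substitutes this $\alpha$ into the remainder, and divides by $2$. Your justification of the integration by parts is a detail the paper leaves implicit: $u$ has compact support in $B_R^K\setminus\{o\}$, and $\hK{\cdot}$ is $C^1$ away from $o$ for $K\in\Ksc$.
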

We now derive several consequences of Theorem \ref{CKN-identity} by choosing
suitable functions $A$ and $B$.

\begin{corollary}\label{cor:Heisenberg-CKN} Let $K\in\Ksc$. Then, for every $u\in C_c^\infty(\RN\setminus\{o\})\setminus\{0\}$ such that $$ \|\mathcal R_K(u)\|>0 \quad\text{and}\quad \|\hK{x}u\|>0, $$ one has \begin{align}\label{c3} &\bigg(\int_{\RN}\big|\mathcal R_K(u)\big|^2dx\bigg)^{\frac12} \bigg(\int_{\RN}\hK{x}^2u^2dx\bigg)^{\frac12} \nonumber\\ &=\frac N2\int_{\RN}u^2dx+ \frac12 \int_{\RN} \Bigg| \frac{\big\|\hK{x}u\big\|^{\frac12}\mathcal R_K(u)} {\|\mathcal R_K(u)\|^{\frac12}} + \frac{\|\mathcal R_K(u)\|^{\frac12}\hK{x}u} {\|\hK{x}u\|^{\frac12}} \Bigg|^2dx. 
\end{align} Consequently, 
\begin{align}\label{radial-Heisenberg-CKN-ineq} \bigg(\int_{\RN}\big|\mathcal R_K(u)\big|^2dx\bigg)^{\frac12} \bigg(\int_{\RN}\hK{x}^2u^2dx\bigg)^{\frac12} \ge \frac N2\int_{\RN}u^2dx. 
\end{align} 
\end{corollary}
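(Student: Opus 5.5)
The plan is to specialize Theorem \ref{CKN-identity} with $R=\infty$, $A\equiv 1$ and $B(r)=r$, so that $B(\hK{x})=\hK{x}$ and $H(r)=AB=r$. Both $A$ and $B$ are in $C^1(0,\infty)$. The hypotheses $\|\mathcal R_K(u)\|>0$ and $\|\hK{x}u\|>0$ are exactly the nondegeneracy conditions required in Theorem \ref{CKN-identity}. Substituting these choices into \eqref{li-2}, the left-hand side becomes the product of $\|\mathcal R_K(u)\|$ and $\|\hK{x}u\|$, and the remainder term becomes the squared quantity appearing in \eqref{c3}. The only remaining task is to compute the divergence term.

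For that, observe that $H(\hK{x})\sigma_K(x)=\hK{x}\cdot\frac{x}{\hK{x}}=x$ for $x\neq o$. Hence
\[
\operatorname{div}\big(H(\hK{x})\sigma_K(x)\big)=\operatorname{div}(x)=N .
\]
This identity is purely algebraic. It does not need the gradient formula \eqref{relation-x-y} or any evenness of $\hK{\cdot}$, so the lack of origin-symmetry of $K$ plays no role here. The integral $\frac12\int_{\RN}\operatorname{div}(\cdots)u^2\,dx$ therefore equals $\frac N2\int_{\RN}u^2\,dx$, and \eqref{c3} follows. The same value can be read off from \eqref{E1-1}, where $A=1$ and $B=\hK{x}$ also give divergence $N$.

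One technical point deserves care. Since $u\in C_c^\infty(\RN\setminus\{o\})$, the support of $u$ is contained in $\{r_1\le \hK{x}\le r_2\}$ for some $0<r_1<r_2<\infty$. On that compact set every weight is smooth, so all integrals are finite and the integrations by parts behind Theorem \ref{CKN-identity} are justified. No boundary or origin terms arise, and no regularity of $\hK{\cdot}$ beyond what $K\in\Ksc$ already provides is used.

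Inequality \eqref{radial-Heisenberg-CKN-ineq} then follows by discarding the nonnegative remainder integral. In the degenerate cases where $\|\mathcal R_K(u)\|=0$ or $\|\hK{x}u\|=0$, the function $u$ must vanish identically. For the second norm this is immediate. For the first, $\mathcal R_K(u)=0$ means $u$ is constant along each ray from the origin, and compact support away from $o$ forces $u\equiv 0$. So the inequality holds trivially in those cases as well.

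I expect no real obstacle in this argument. The only step requiring any care is confirming that the weighted choice $B=\hK{x}$ makes the product $H\sigma_K$ collapse exactly to the position vector $x$.
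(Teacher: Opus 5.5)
Your proposal is correct and is the paper's own argument: take $A=1$, $B=\hK{x}$ in Theorem \ref{CKN-identity}, observe that $H(\hK{x})\sigma_K(x)=x$ so the divergence term equals $N$, and obtain the inequality by dropping the nonnegative remainder. Two of your remarks go beyond what the paper writes, and both are accurate: the divergence computation is purely algebraic, and for nonzero $u$ the nondegeneracy hypotheses hold automatically.
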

\begin{proof} Taking $A=1$ and $B=\hK{x}$ in Theorem \ref{CKN-identity}, one has $$  \operatorname{div}\big(H(\hK{x})\sigma_K(x)\big)=N.$$ Thus, \eqref{c3} follows directly from \eqref{li-2}. Inequality \eqref{radial-Heisenberg-CKN-ineq} follows from the nonnegativity of the remainder term. 
\end{proof}
For inequality \eqref{radial-Heisenberg-CKN-ineq}, we will prove a similar inequality with $|\mathcal{R}_K(u)|$ replaced by $\|-\nabla u\|_{K^*}$, and in particular verify whether the constant $\frac{N}{2}$ is a sharp constant in Section \ref{s4}. 

\begin{corollary}\label{cor:A1-CKN} Let $K\in\Ksc$. Assume that $b+1-a>0$ and $2b\le N-2$. Then, for every $u\in C_c^\infty(\RN\setminus\{o\})\setminus\{0\}$ such that $$ \bigg\|\frac{\mathcal R_K(u)}{\hK{x}^{b}}\bigg\|>0 \quad\text{and}\quad \bigg\|\frac{u}{\hK{x}^{a}}\bigg\|>0, $$ one has 
\begin{align}\label{f55}
\bigg(\int_{\RN}\frac{\big|\mathcal{R}_K(u)\big|^2}{\hK{x}^{2b}}dx\bigg)^{\frac{1}{2}}\bigg(\int_{\RN}\frac{u^2}{\hK{x}^{2a}}dx\bigg)^{\frac{1}{2}}&=\frac{N-a-b-1}{2}\int_{\RN}\frac{u^2}{\hK{x}^{a+b+1}}dx\nonumber\\
&+\frac{1}{2}\int_{\RN}\Bigg|\frac{\big\|\frac{u}{\hK{x}^{a}}\big\|^{\frac{1}{2}}\mathcal{R}_K(u)}{\big\|\frac{\mathcal{R}_K(u)}{\hK{x}^{b}}\big\|^{\frac{1}{2}}\hK{x}^{b}}+\frac{\big\|\frac{\mathcal{R}_K(u)}{\hK{x}^{b}}\big\|^{\frac{1}{2}}u}{\big\|\frac{u}{\hK{x}^{a}}\big\|^{\frac{1}{2}}\hK{x}^{a}}\Bigg|^{2}dx.
\end{align} Consequently, 
\begin{align}\label{A1-CKN-ineq} \bigg(\int_{\RN}\frac{\big|\mathcal R_K(u)\big|^2}{\hK{x}^{2b}}dx\bigg)^{\frac12} \bigg(\int_{\RN}\frac{u^2}{\hK{x}^{2a}}dx\bigg)^{\frac12} \ge \frac{N-a-b-1}{2} \int_{\RN}\frac{u^2}{\hK{x}^{a+b+1}}dx . 
\end{align} 
\end{corollary}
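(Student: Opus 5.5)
We will obtain Corollary \ref{cor:A1-CKN} as a direct specialization of Theorem \ref{CKN-identity} with $R=\infty$. We choose
$$
A(r)=r^{-b},\qquad B(r)=r^{-a},\qquad H(r)=A(r)B(r)=r^{-a-b},
$$
all of which lie in $C^1(0,\infty)$. With $r=\hK{x}$ these give $A\mathcal R_K(u)=\mathcal R_K(u)/\hK{x}^{b}$ and $Bu=u/\hK{x}^{a}$. The nondegeneracy hypotheses of Theorem \ref{CKN-identity} are then exactly the two assumptions $\|\mathcal R_K(u)/\hK{x}^b\|>0$ and $\|u/\hK{x}^a\|>0$. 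With these substitutions, the left-hand side of \eqref{li-2} and the remainder term become literally the corresponding expressions in \eqref{f55}.

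The only computation is the divergence term. On $\RN\setminus\{o\}$ we write
$$
H(\hK{x})\sigma_K(x)=\hK{x}^{-a-b-1}x
$$
and apply the product rule:
$$
\operatorname{div}\big(\hK{x}^{-a-b-1}x\big)=N\hK{x}^{-a-b-1}+(-a-b-1)\hK{x}^{-a-b-2}\,x\cdot\nabla\hK{x}.
$$
Since $K\in\Ksc$, the Euler relation \eqref{relation-x-y} gives $x\cdot\nabla\hK{x}=\hK{x}$, which makes the Minkowski functional differentiable away from the origin. Therefore
$$
\operatorname{div}\big(H(\hK{x})\sigma_K(x)\big)=(N-a-b-1)\hK{x}^{-a-b-1}.
$$
This is the same computation already recorded in the proof of \eqref{f48}. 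Substituting it into \eqref{li-2} yields \eqref{f55}.

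Inequality \eqref{A1-CKN-ineq} follows by discarding the nonnegative remainder integral in \eqref{f55}. If one of the two norms vanishes, then $u\equiv0$, and the inequality holds trivially.

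There is no genuine obstacle here. The one point needing care is the validity of the integration by parts underlying Theorem \ref{CKN-identity}. The weights $\hK{x}^{-s}$ are singular only at the origin, and $u$ is compactly supported away from $o$, so every integrand is smooth and compactly supported. The divergence theorem therefore applies with no boundary terms.

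The hypotheses $b+1-a>0$ and $2b\le N-2$ are not used in the identity itself, which holds for all $(a,b)$. They only identify the region $\mathcal C_1$, where the constant $\frac{N-a-b-1}{2}$ is nonnegative and is the relevant sharp constant. Indeed, $a<b+1\le N/2$ gives $a+b+1<2b+2\le N$.
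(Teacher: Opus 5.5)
Your proposal is correct and follows the paper's proof exactly: specialize Theorem~\ref{CKN-identity} with $A=\hK{x}^{-b}$ and $B=\hK{x}^{-a}$, compute $\operatorname{div}\big(H(\hK{x})\sigma_K(x)\big)=(N-a-b-1)\hK{x}^{-a-b-1}$, and discard the nonnegative remainder. One small wording fix: differentiability of $\hK{\cdot}$ on $\RN\setminus\{o\}$ comes from $K\in\Ksc$, with the Euler relation then following from homogeneity; also, $\hK{\cdot}$ is in general only $C^1$ there rather than smooth, which is still enough for the integration by parts.
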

\begin{proof} Taking $A=\hK{x}^{-b}$ and $B=\hK{x}^{-a}$ in Theorem \ref{CKN-identity}, one has $$ \operatorname{div}\big(H(\hK{x})\sigma_K(x)\big) = (N-1-a-b)\hK{x}^{-a-b-1}. $$ Thus, \eqref{f55} follows from \eqref{li-2}. Since the last integral in \eqref{f55} is nonnegative, hence, \eqref{A1-CKN-ineq} follows. 
\end{proof}

\begin{corollary}\label{cor:A2-CKN} Let $K\in\Ksc$. Assume that $b+1-a<0$ and $2b\ge N-2$. Then, for every $u\in C_c^\infty(\RN\setminus\{o\})\setminus\{0\}$ such that $$ \bigg\|\frac{\mathcal R_K(u)}{\hK{x}^{b}}\bigg\|>0 \quad\text{and}\quad \bigg\|\frac{u}{\hK{x}^{a}}\bigg\|>0, $$ one has 
\begin{align}\label{f75}
\!\!\!\!\!\bigg(\!\int_{\RN}\frac{\big|\mathcal{R}_K(u)\big|^2}{\hK{x}^{2b}}dx\!\bigg)^{\frac{1}{2}}\bigg(\!\int_{\RN}\frac{u^2}{\hK{x}^{2a}}dx\!\bigg)^{\frac{1}{2}}&\!\!=\!\frac{a+b+1-N}{2}\int_{\RN}\frac{u^2}{\hK{x}^{a+b+1}}dx\nonumber\\
&\!\!+\!\frac{1}{2}\int_{\RN}\Bigg|-\frac{\big\|\frac{u}{\hK{x}^{a}}\big\|^{\frac{1}{2}}\mathcal{R}_K(u)}{\big\|\frac{\mathcal{R}_K(u)}{\hK{x}^{b}}\big\|^{\frac{1}{2}}\hK{x}^{b}}+\frac{\big\|\frac{\mathcal{R}_K(u)}{\hK{x}^{b}}\big\|^{\frac{1}{2}}u}{\big\|\frac{u}{\hK{x}^{a}}\big\|^{\frac{1}{2}}\hK{x}^{a}}\Bigg|^{2}dx.
\end{align} 
 Consequently, 
\begin{align}\label{A2-CKN-ineq} \bigg(\int_{\RN}\frac{\big|\mathcal R_K(u)\big|^2}{\hK{x}^{2b}}dx\bigg)^{\frac12} \bigg(\int_{\RN}\frac{u^2}{\hK{x}^{2a}}dx\bigg)^{\frac12} \ge \frac{a+b+1-N}{2} \int_{\RN}\frac{u^2}{\hK{x}^{a+b+1}}dx . 
\end{align} 
\end{corollary}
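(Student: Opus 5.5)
The identity \eqref{f75} follows from Theorem \ref{CKN-identity} with the choice
$$
A=-\hK{x}^{-b},\qquad B=\hK{x}^{-a},\qquad H=AB=-\hK{x}^{-a-b},
$$
mirroring the proof of Corollary \ref{cor:A1-CKN}, where the sign of $A$ was $+$. The sign flip is natural here: in the region $b+1-a<0$, $2b\ge N-2$ the divergence coefficient $N-1-a-b$ is nonpositive, so the positive orientation would give a useless lower bound.

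\textbf{Step 1: compute the divergence.} For $K\in\Ksc$ and $x\neq o$, write $r=\hK{x}$. By \eqref{relation-x-y} we have $\nabla\hK{x}\cdot x=\hK{x}$, hence
$$
\nabla\big(h(\hK{x})\big)\cdot\sigma_K(x)=h'(r),\qquad \operatorname{div}\big(\sigma_K(x)\big)=\frac{N}{r}-\frac{x\cdot\nabla\hK{x}}{r^2}=\frac{N-1}{r}.
$$
Therefore
$$
\operatorname{div}\big(h(\hK{x})\sigma_K(x)\big)=h'(r)+\frac{N-1}{r}\,h(r).
$$
This is the same computation used in all the preceding corollaries. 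With $h(r)=-r^{-a-b}$ it gives
$$
\operatorname{div}\big(H\sigma_K\big)=(a+b)r^{-a-b-1}-(N-1)r^{-a-b-1}=(a+b+1-N)\,\hK{x}^{-a-b-1}.
$$

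\textbf{Step 2: substitute into \eqref{li-2}.} We have $\|A\mathcal R_K(u)\|=\big\|\mathcal R_K(u)/\hK{x}^{b}\big\|$ since $A^2=\hK{x}^{-2b}$, and $\|Bu\|=\big\|u/\hK{x}^{a}\big\|$. The left side of \eqref{li-2} is therefore exactly the left side of \eqref{f75}. The remainder integrand in \eqref{li-2} becomes
$$
-\frac{\|u/\hK{x}^a\|^{1/2}\,\mathcal R_K(u)}{\|\mathcal R_K(u)/\hK{x}^b\|^{1/2}\,\hK{x}^b}+\frac{\|\mathcal R_K(u)/\hK{x}^b\|^{1/2}\,u}{\|u/\hK{x}^a\|^{1/2}\,\hK{x}^a},
$$
which matches \eqref{f75}. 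The positivity hypotheses on the two norms are exactly those required by Theorem \ref{CKN-identity}. Because $u$ has compact support in $\RN\setminus\{o\}$, all integrals are finite and the weights $A$, $B$, $H$ are smooth on the support. Theorem \ref{CKN-identity} applies with $R=\infty$.

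\textbf{Step 3: the inequality.} Dropping the nonnegative last integral in \eqref{f75} yields \eqref{A2-CKN-ineq}.

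The hypotheses $b+1-a<0$ and $2b\ge N-2$ are not needed for the identity itself, which holds for all $(a,b)$. They only ensure that the constant $\frac{a+b+1-N}{2}$ is nonnegative: $a+b+1-N> 2b+2-N\ge0$. They also indicate the region where this constant will later be shown to be sharp.

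The only mildly delicate point is the divergence computation in Step 1, specifically $\operatorname{div}\sigma_K=(N-1)/\hK{x}$. It relies on the Euler identity for the $1$-homogeneous function $\hK{\cdot}$, which is differentiable away from the origin because $K\in\Ksc$. No symmetry of $K$ is used anywhere.
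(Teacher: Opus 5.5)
Your proposal is correct and is essentially the paper's proof: you choose $A=-\|x\|_K^{-b}$ and $B=\|x\|_K^{-a}$ in Theorem \ref{CKN-identity}, compute $\operatorname{div}\big(H(\|x\|_K)\sigma_K(x)\big)=(a+b+1-N)\|x\|_K^{-a-b-1}$, read off \eqref{f75} from \eqref{li-2}, and drop the nonnegative remainder to obtain \eqref{A2-CKN-ineq}. Your explicit derivation of $\operatorname{div}\sigma_K=(N-1)/\|x\|_K$ is correct and is a detail the paper leaves implicit, as is your remark that the hypotheses on $(a,b)$ only serve to make the constant $\frac{a+b+1-N}{2}$ nonnegative.
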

\begin{proof} Taking $A=-\hK{x}^{-b}$ and $B=\hK{x}^{-a}$ in Theorem \ref{CKN-identity}, one has $$ \operatorname{div}\big(H(\hK{x})\sigma_K(x)\big) = (a+b+1-N)\hK{x}^{-a-b-1}.$$ Thus, equality \eqref{f75} follows from \eqref{li-2}. Since the last integral in \eqref{f75} is nonnegative, then \eqref{A2-CKN-ineq} follows. 
\end{proof}
We revisit inequalities \eqref{A1-CKN-ineq} and \eqref{A2-CKN-ineq} in Section \ref{s5}, and prove that the constant $\big|\frac{N-a-b-1}{2}\big|$ is a sharp constant in the corresponding parameter region.

\begin{corollary}\label{cor:B1-CKN} Let $K\in\Ksc$. Assume that $b+1-a<0$ and $2b\leq N-2$. Then, for every $u\in C_c^\infty(\RN\setminus\{o\})\setminus\{0\}$ such that $$ \bigg\|\frac{\mathcal R_K(u)}{\hK{x}^{b}}\bigg\|>0 \quad\text{and}\quad \bigg\|\frac{u}{\hK{x}^{a}}\bigg\|>0, $$ one has 
\begin{align}\label{f56}
&\bigg(\int_{\RN}\frac{\big|\mathcal{R}_K(u)\big|^2}{\hK{x}^{2b}}dx\bigg)^{\frac{1}{2}}\bigg(\int_{\RN}\frac{u^2}{\hK{x}^{2a}}dx\bigg)^{\frac{1}{2}} -\frac{N-3b+a-3}{2}\int_{\RN}\frac{u^2}{\hK{x}^{a+b+1}}dx\nonumber \\
&=\frac{1}{2}\int_{\RN}\Bigg|
\frac{\big\|\frac{u}{\hK{x}^{a}}\big\|^{\frac{1}{2}}\mathcal{R}_K(u)}{\big\|\frac{\mathcal{R}_K(u)}{\hK{x}^{b}}\big\|^{\frac{1}{2}}\hK{x}^{b}} 
+\Bigg(\frac{\big\|\frac{u}{\hK{x}^{a}}\big\|^{\frac{1}{2}}}{\big\|\frac{\mathcal{R}_K(u)}{\hK{x}^{b}}\big\|^{\frac{1}{2}}}\frac{(N-2b-2)}{\hK{x}^{b+1}}
-\frac{\big\|\frac{\mathcal{R}_K(u)}{\hK{x}^{b}}\big\|^{\frac{1}{2}}}{\big\|\frac{u}{\hK{x}^{a}}\big\|^{\frac{1}{2}}\hK{x}^{a}} \Bigg)u\Bigg|^2dx.
\end{align}
Consequently, 
\begin{align}\label{B1-CKN-ineq} \bigg(\int_{\RN}\frac{\big|\mathcal R_K(u)\big|^2}{\hK{x}^{2b}}dx\bigg)^{\frac12} \bigg(\int_{\RN}\frac{u^2}{\hK{x}^{2a}}dx\bigg)^{\frac12} \ge \frac{N-3b+a-3}{2} \int_{\RN}\frac{u^2}{\hK{x}^{a+b+1}}dx. 
\end{align} 
\end{corollary}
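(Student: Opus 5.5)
The plan is to reproduce the ``expanding-the-square'' computation that led to \eqref{1i-1}, but now with a \emph{three-term} square whose coefficients depend on the optimizing parameter $\alpha$. Theorem \ref{CKN-identity} cannot be quoted directly with the choice of $A,B$ used in \eqref{f49}. The reason is that the normalization in \eqref{f56} involves $\|u/\hK{x}^a\|$, and not $\|Bu\|$ with $B=\hK{x}^{-a}-(N-2b-2)\hK{x}^{-b-1}$.

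Write $r=\hK{x}$, and set
\[
X=\int_{\RN}\frac{|\mathcal R_K(u)|^2}{r^{2b}}dx,\quad Y=\int_{\RN}\frac{u^2}{r^{2a}}dx,\quad Z=\int_{\RN}\frac{u^2}{r^{a+b+1}}dx,\quad c=N-2b-2.
\]
For $\alpha>0$, consider
\[
I(\alpha)=\int_{\RN}\Big|\alpha\frac{\mathcal R_K(u)}{r^{b}}+\Big(\frac{\alpha c}{r^{b+1}}-\frac{1}{\alpha r^{a}}\Big)u\Big|^2dx .
\]
Expanding gives three kinds of terms:
\begin{itemize}
\item the pure terms $\alpha^2X+\alpha^{-2}Y$;
\item the mixed potential terms $-2c\,Z+\alpha^2c^2\int u^2r^{-2b-2}dx$;
\item the cross term $\int G(r)\,\sigma_K(x)\cdot\nabla(u^2)\,dx$, where $G(r)=\alpha^2c\,r^{-2b-1}-r^{-a-b}$.
\end{itemize}

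For the cross term I integrate by parts; this is legitimate because $u$ has compact support away from $o$. The key computation is the divergence formula
\[
\operatorname{div}\big(f(r)\sigma_K(x)\big)=f'(r)+\frac{N-1}{r}f(r),
\]
which follows from $x\cdot\nabla\hK{x}=\hK{x}$ (see \eqref{relation-x-y}) and $\operatorname{div}x=N$. This is exactly the fact already used in the corollaries above. It yields
\[
\operatorname{div}(G\sigma_K)=\alpha^2c^2r^{-2b-2}-(N-1-a-b)r^{-a-b-1}.
\]
The cross term therefore equals $-\alpha^2c^2\int u^2r^{-2b-2}dx+(N-1-a-b)Z$.

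The $r^{-2b-2}$ contributions then cancel. This cancellation is the one step I would check carefully, since the whole choice of the extra coefficient $c/r^{b+1}$ is designed for it. Collecting the $Z$-coefficients, $-2c+(N-1-a-b)=-(N-3b+a-3)$, so
\[
I(\alpha)=\alpha^2X+\alpha^{-2}Y-(N-3b+a-3)Z .
\]

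Finally I choose $\alpha^2=\|u/r^a\|/\|\mathcal R_K(u)/r^b\|$, which is allowed by the positivity assumptions. With this choice $\alpha^2X=\alpha^{-2}Y=\sqrt{XY}$, and dividing by $2$ gives precisely \eqref{f56} with the stated remainder. The inequality \eqref{B1-CKN-ineq} then follows from the nonnegativity of the remainder.

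The region hypotheses $b+1-a<0$ and $2b\le N-2$ are not needed for the identity itself. They matter only because they make the constant $\frac{N-3b+a-3}{2}$ the relevant (nonnegative) one, which is what the later sharpness analysis uses. The main obstacle is purely bookkeeping: keeping the signs straight so that the $r^{-2b-2}$ terms cancel and the coefficient $N-3b+a-3$ emerges.
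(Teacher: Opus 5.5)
Your argument is correct. It is not the route the paper writes down, and in fact it is the route that actually works.

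The paper proves \eqref{f56} in one line by inserting $A=-\hK{x}^{-b}$ and $B=\hK{x}^{-a}-(N-2b-2)\hK{x}^{-b-1}$ into Theorem \ref{CKN-identity}, and asserting $\operatorname{div}(H\sigma_K)=(N-3b+a-3)\hK{x}^{-a-b-1}$. Your objection to this is justified, for three reasons:
\begin{enumerate}
\item[(1)] With that pair, the left side of \eqref{li-2} is $\|\mathcal R_K(u)/\hK{x}^{b}\|\,\|Bu\|$, not $\|\mathcal R_K(u)/\hK{x}^{b}\|\,\|u/\hK{x}^{a}\|$.
\item[(2)] Its remainder weights the term $(N-2b-2)\hK{x}^{-b-1}u$ by $\|A\mathcal R_K(u)\|^{1/2}/\|Bu\|^{1/2}$ rather than by your $\alpha$.
\item[(3)] Writing $r=\hK{x}$ and $c=N-2b-2$, one actually has $\operatorname{div}(H\sigma_K)=(a+b+1-N)r^{-a-b-1}+c^2r^{-2b-2}$.
\end{enumerate}
The number $N-3b+a-3$ is the coefficient of $r^{-a-b-1}$ in $\operatorname{div}(H\sigma_K)-B^2$. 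That quantity is what the Hardy identity \eqref{f49} uses, not \eqref{li-2}.

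Your three-term square is equivalent to applying Theorem \ref{Hardy-identity} (valid for arbitrary $A,B$) to the $\alpha$-dependent pair $A=-\alpha r^{-b}$, $B=\alpha^{-1}r^{-a}-\alpha c\,r^{-b-1}$. For this pair $\operatorname{div}(H\sigma_K)-B^2=(N-3b+a-3)r^{-a-b-1}-\alpha^{-2}r^{-2a}$, and one then chooses $\alpha^2=\|u/r^a\|/\|\mathcal R_K(u)/r^b\|$. I checked each step of your computation and all are right:
\begin{enumerate}
\item[(a)] the expansion of the square;
\item[(b)] the divergence of $G\sigma_K$;
\item[(c)] the cancellation of the $r^{-2b-2}$ terms;
\item[(d)] the coefficient $-2c+(N-1-a-b)=-(N-3b+a-3)$.
\end{enumerate}

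The cancellation you flagged has a transparent reason. Since $\operatorname{div}(r^{-2b-1}\sigma_K)=c\,r^{-2b-2}$, one has $\int_{\RN} r^{-2b}|\mathcal R_K(u)+c\,u/r|^2dx=\int_{\RN} r^{-2b}|\mathcal R_K(u)|^2dx$. So \eqref{f56} is simply the $\alpha$-optimized expansion of $\|\alpha\, r^{-b}(\mathcal R_K(u)+c\,u/r)-\alpha^{-1}r^{-a}u\|^2$, i.e.\ a CKN identity for the shifted operator $\mathcal R_K(u)+c\,u/r$.

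The paper's formulation would buy brevity if it were valid. Yours gives a correct, self-contained proof. It also shows the identity holds for every $(a,b)$, with the region hypotheses only fixing the sign of the constant, and it explains where $N-3b+a-3$ comes from.
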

\begin{proof} Taking $A=-\hK{x}^{-b}$ and $B=\hK{x}^{-a}-(N-2b-2)\hK{x}^{-b-1}$ in Theorem \ref{CKN-identity}, one has $$\operatorname{div}\big(H(\hK{x})\sigma_K(x)\big) = \frac{N-3b+a-3}{\hK{x}^{a+b+1}}.$$ Thus,  equality \eqref{f56} follows from  equality \eqref{li-2}. Since the right hand side of equality \eqref{f56} is nonnegative, then inequality \eqref{B1-CKN-ineq} follows. 
\end{proof}

\begin{corollary}\label{cor:B2-CKN} Let $K\in\Ksc$. Assume that $b+1-a>0$ and $2b\geq N-2$. Then, for every $u\in C_c^\infty(\RN\setminus\{o\})\setminus\{0\}$ such that $$ \bigg\|\frac{\mathcal R_K(u)}{\hK{x}^{b}}\bigg\|>0 \quad\text{and}\quad \bigg\|\frac{u}{\hK{x}^{a}}\bigg\|>0, $$ one has 
\begin{align}\label{f61}
&\bigg(\int_{\RN}\frac{\big|\mathcal{R}_K(u)\big|^2}{\hK{x}^{2b}}dx\bigg)^{\frac{1}{2}}\bigg(\int_{\RN}\frac{u^2}{\hK{x}^{2a}}dx\bigg)^{\frac{1}{2}} -\frac{3b-a+3-N}{2}\int_{\RN}\frac{u^2}{\hK{x}^{a+b+1}}dx\nonumber \\
&=\frac{1}{2}\int_{\RN}\Bigg|
\frac{\big\|\frac{u}{\hK{x}^{a}}\big\|^{\frac{1}{2}}\mathcal{R}_K(u)}{\big\|\frac{\mathcal{R}_K(u)}{\hK{x}^{b}}\big\|^{\frac{1}{2}}\hK{x}^{b}} 
+ \Bigg(\frac{\big\|\frac{u}{\hK{x}^{a}}\big\|^{\frac{1}{2}}}{\big\|\frac{\mathcal{R}_K(u)}{\hK{x}^{b}}\big\|^{\frac{1}{2}}}\frac{(N-2b-2)}{\hK{x}^{b+1}}
+\frac{\big\|\frac{\mathcal{R}_K(u)}{\hK{x}^{b}}\big\|^{\frac{1}{2}}}{\big\|\frac{u}{\hK{x}^{a}}\big\|^{\frac{1}{2}}\hK{x}^{a}}\Bigg)u\Bigg|^2dx.
\end{align} 
Consequently, 
\begin{align}\label{B2-CKN-ineq} \bigg(\int_{\RN}\frac{\big|\mathcal R_K(u)\big|^2}{\hK{x}^{2b}}dx\bigg)^{\frac12} \bigg(\int_{\RN}\frac{u^2}{\hK{x}^{2a}}dx\bigg)^{\frac12} \ge \frac{3b-a+3-N}{2} \int_{\RN}\frac{u^2}{\hK{x}^{a+b+1}}dx . 
\end{align} 
\end{corollary}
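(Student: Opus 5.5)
The plan follows the pattern of Corollary \ref{cor:B1-CKN}: apply Theorem \ref{CKN-identity} with $R=\infty$ and the same choice of $A$ and $B$ used for the Hardy identity \eqref{f50}, namely
$$
A=-\hK{x}^{-b},\qquad B=-\hK{x}^{-a}+(2b+2-N)\hK{x}^{-b-1}.
$$
There are three points to check.

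\textbf{Step 1: the divergence.} With $H=AB=\hK{x}^{-a-b}-(2b+2-N)\hK{x}^{-2b-1}$, write $H(r)=\sum_j c_j r^{-\gamma_j}$. The key fact is that for any smooth radial profile $f$,
$$
\operatorname{div}\big(f(\hK{x})\sigma_K(x)\big)=\operatorname{div}\Big(\frac{f(\hK{x})}{\hK{x}}\,x\Big)=f'(\hK{x})+(N-1)\frac{f(\hK{x})}{\hK{x}}.
$$
This uses only $x\cdot\nabla\hK{x}=\hK{x}$ from \eqref{relation-x-y} and $\operatorname{div}x=N$. Applying it term by term gives
$$
\operatorname{div}\big(H\sigma_K\big)=(N-1-a-b)\hK{x}^{-a-b-1}-(2b+2-N)(N-2b-2)\hK{x}^{-2b-2}.
$$

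\textbf{Step 2: the left-hand side.} The identity \eqref{li-2} has $\int B^2u^2$ on the left, and $B^2$ is not $\hK{x}^{-2a}$. So I cannot plug in blindly. Instead I go back to \eqref{1i-1} with a general $\alpha>0$ and expand $B^2$:
$$
B^2=\hK{x}^{-2a}-2(2b+2-N)\hK{x}^{-a-b-1}+(2b+2-N)^2\hK{x}^{-2b-2}.
$$
Then I put $\alpha$ only on the pieces $A\mathcal R_K(u)$ and $\hK{x}^{-a}u$, leaving the correction term $(2b+2-N)\hK{x}^{-b-1}u$ unscaled. This is what the stated remainder suggests: it has $\alpha^{-1}(N-2b-2)\hK{x}^{-b-1}$ scaled together with the $\mathcal R_K$ term, and $\alpha\hK{x}^{-a}$ separately.

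Concretely, I expand $\int\big|\alpha\hK{x}^{-b}\mathcal R_K(u)+\big(\alpha(N-2b-2)\hK{x}^{-b-1}+\alpha^{-1}\hK{x}^{-a}\big)u\big|^2$ and integrate the cross terms by parts using $2u\,\mathcal R_K(u)=\sigma_K\cdot\nabla(u^2)$ and the divergence formula of Step 1. The $\hK{x}^{-2b-2}$ contributions should cancel exactly, just as in the derivation of \eqref{f50}. What remains is
$$
\alpha^2\int\frac{|\mathcal R_K(u)|^2}{\hK{x}^{2b}}+\alpha^{-2}\int\frac{u^2}{\hK{x}^{2a}}-(3b-a+3-N)\int\frac{u^2}{\hK{x}^{a+b+1}}=\text{remainder}.
$$
Here $\alpha$ scales the first two terms reciprocally and the middle coefficient is independent of $\alpha$.

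\textbf{Step 3: optimize in $\alpha$.} Choose $\alpha^2=\big\|\hK{x}^{-a}u\big\|\big/\big\|\hK{x}^{-b}\mathcal R_K(u)\big\|$, which is well defined by the positivity hypotheses. The two leading terms then both equal the product of norms, giving $2\times$ the product of norms on the left. Dividing by $2$ yields \eqref{f61}, and \eqref{B2-CKN-ineq} follows because the remainder is nonnegative.

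Note that the hypotheses $b+1-a>0$ and $2b\ge N-2$ are not needed for the identity itself. They only govern when $\frac{3b-a+3-N}{2}$ is the relevant (sharp, nonnegative) constant.

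\textbf{Main obstacle.} The main difficulty is the bookkeeping in Step 2: confirming that the $\hK{x}^{-2b-2}$ terms cancel and that the mixed coefficient combines to $-(3b-a+3-N)$. Signs matter here, and the integration-by-parts cross term is $\alpha^2(N-2b-2)$ times $\int\hK{x}^{-2b-1}\sigma_K\cdot\nabla(u^2)$, which produces $-(N-2b-2)^2$ and cancels against the square. I would verify this first in the radial case $K=B_2^N$ against the Euclidean formula before writing it in general.
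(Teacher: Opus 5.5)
Your proposal is correct, and it takes a different route from the paper's own proof. The paper substitutes $A=-\hK{x}^{-b}$ and $B=-\hK{x}^{-a}+(2b+2-N)\hK{x}^{-b-1}$ directly into Theorem \ref{CKN-identity} and asserts $\operatorname{div}(H\sigma_K)=(3b+3-a-N)\hK{x}^{-a-b-1}$.

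Taken literally, that one-line route does not produce \eqref{f61}, for three reasons:
\begin{itemize}
\item \eqref{li-2} would carry $\|Bu\|$ rather than $\|\hK{x}^{-a}u\|$ on the left.
\item Its optimizing parameter is $(\|Bu\|/\|A\mathcal R_K(u)\|)^{1/2}$, not the one appearing in \eqref{f61}.
\item The displayed value is not the divergence. The divergence is your Step 1 value, $(N-1-a-b)\hK{x}^{-a-b-1}+(N-2b-2)^2\hK{x}^{-2b-2}$. The displayed value is instead $\operatorname{div}(H\sigma_K)-B^2+\hK{x}^{-2a}$, which is the quantity entering the Hardy identity \eqref{f50}.
\end{itemize}

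Your regrouping fixes this. You let $\alpha$ multiply $\hK{x}^{-b}\mathcal R_K(u)+(N-2b-2)\hK{x}^{-b-1}u$ and $\alpha^{-1}$ multiply $\hK{x}^{-a}u$, which is an $\alpha$-deformation of \eqref{f50}. It reproduces exactly the remainder written in \eqref{f61}.

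The bookkeeping you were worried about checks out:
\begin{itemize}
\item The cross term $2\alpha^2(N-2b-2)\int \hK{x}^{-2b-1}u\,\mathcal R_K(u)\,dx$ integrates to $-\alpha^2(N-2b-2)^2\int\hK{x}^{-2b-2}u^2dx$, which cancels the squared correction term.
\item The mixed coefficient is $2(N-2b-2)-(N-1-a-b)=-(3b-a+3-N)$.
\end{itemize}
So the choice $\alpha^2=\|\hK{x}^{-a}u\|/\|\hK{x}^{-b}\mathcal R_K(u)\|$ gives \eqref{f61}. As you note, this holds for all $(a,b)$; the region hypotheses only matter for sharpness. The paper's version gains brevity, while yours actually derives the stated identity.

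One slip to correct in your write-up. Your verbal description of Step 2 swaps the powers of $\alpha$ and calls the correction term ``unscaled.'' In fact it carries the same factor $\alpha$ as $\hK{x}^{-b}\mathcal R_K(u)$, exactly as in the expansion you display.
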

\begin{proof} Taking $A=-\hK{x}^{-b}$ and $B=-\hK{x}^{-a}+(2b+2-N)\hK{x}^{-b-1}$ in Theorem \ref{CKN-identity}, one has $$ \operatorname{div}\big(H(\hK{x})\sigma_K(x)\big) = \frac{3b+3-a-N}{\hK{x}^{a+b+1}}. $$ Thus, equality \eqref{f61} follows from equality \eqref{li-2}. Since the right hand side of equality \eqref{f61} is nonnegative, then inequality \eqref{B2-CKN-ineq} follows. 
\end{proof}

The following figure provides a clear view of the regions for $(a, b)\in \R^2$, where the above identities exhibit different formulas, and in fact the best constants of the inequalities depend on where $(a, b)$ lies in. 
\begin{figure}[H]
\centering
\includegraphics[width=0.5\textwidth]{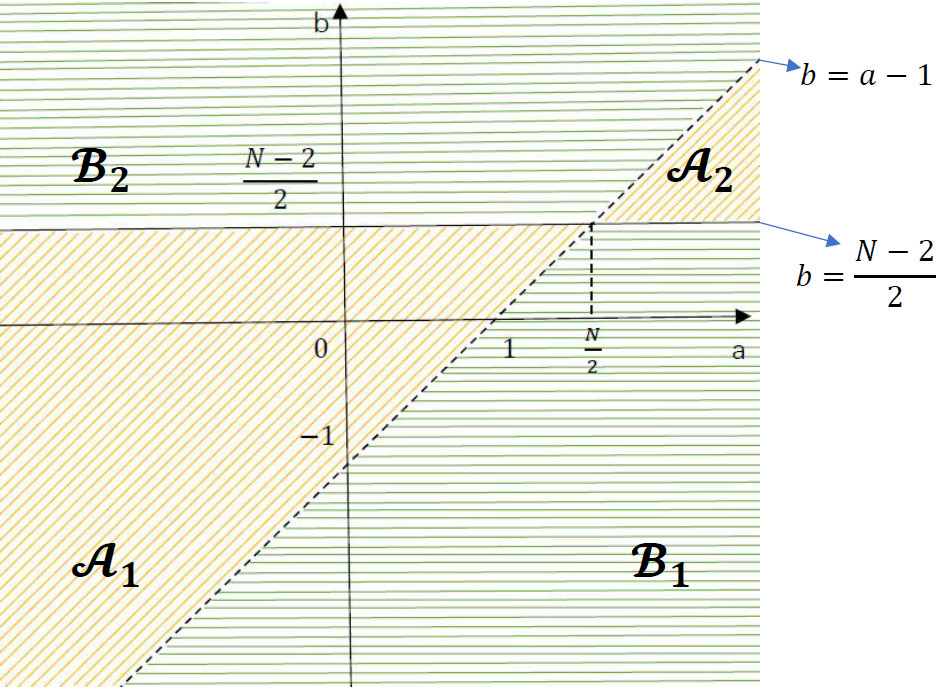}
\caption{The regions for $(a, b)\in \R^2$.}
\label{fig:1}
\hfill
\end{figure}

\section{Sharp constants and optimizers for
the anisotropic Heisenberg Uncertainty Principle}\label{s4}
This section is devoted to the sharp anisotropic Heisenberg uncertainty principle associated
with the Minkowski functional. The proof combines the anisotropic
Hardy-type identity in Section \ref{s3} with the anisotropic P\'olya-Szeg\"{o} principle and the weighted anisotropic
Hardy-Littlewood inequality.
Let $$u\in \mathcal{W}:=W^{1,2}(\RN)\cap\bigg\{u:\int_{\RN}\hK{x}^2|u|^2dx<\infty\bigg\}.$$
\begin{theorem}\label{t2}
Assume that $K\in\Ksc$ and $N \geq 2$. For any function $u \in \mathcal{W}$,
we have
\begin{itemize}
\item[(i)]
\begin{align}\label{f12}
\int_{\RN}\|-\nabla u\|_{K^*}^2dx+\int_{\RN}\hK{x}^2 u^2dx \geq N \int_{\RN} u^2dx,
\end{align}
where the constant $N$ is sharp and is attained by 
$u=\kappa e^{-\frac{1}{2}\hK{x}^2}$
with $\kappa\ge0$.

\item[(ii)]
\begin{align}\label{f71}
\bigg(\int_{\RN}\|-\nabla u\|_{K^*}^2dx\bigg)^{\frac{1}{2}}
\bigg(\int_{\RN}\hK{x}^2 u^2dx\bigg)^{\frac{1}{2}}
\geq \frac{N}{2}\int_{\RN} u^2dx,
\end{align}
where the constant $\frac{N}{2}$ is sharp  and is attained by
$
u(x)=\beta e^{-\frac{1}{2\lambda^2}\hK{x}^2}
$
with $\beta\ge0$ and
$
\lambda>0.
$ 
\end{itemize}
\end{theorem}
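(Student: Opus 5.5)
The plan is to reduce to $K$-radially symmetric functions by anisotropic symmetrization, and then to apply the radial identities \eqref{E1-1} and \eqref{c3} to the symmetrized function.

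\emph{Step 1: passing to $u^K$.} Fix $u\in\mathcal W$; replacing $u$ by $|u|$ changes nothing, since $\|-\nabla|u|\|_{K^*}$ agrees a.e.\ with $\|-\nabla u\|_{K^*}$ or $\|\nabla u\|_{K^*}$. Here I must be careful: on $\{u<0\}$ we have $-\nabla|u|=\nabla u$. So I will instead apply Lemma \ref{l2} directly to $u\in W^{1,2}$, as stated. It gives
\[
\int\|-\nabla u\|_{K^*}^2\ge\int\|-\nabla u^K\|_{K^*}^2 .
\]
Proposition \ref{p1}(ii) with $v(r)=r^2$ gives $\int\hK{x}^2u^2\ge\int\hK{x}^2(u^K)^2$, and equimeasurability gives $\int u^2=\int (u^K)^2$. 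Hence it suffices to prove (i) and (ii) for $w=u^K$, which is nonnegative, non-increasing in $r=\hK{x}$, and of the form $w(x)=\phi(\hK{x})$.

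\emph{Step 2: the radial case.} For such $w$ we have $\nabla w=\phi'(r)\nabla\hK{x}$ with $\phi'\le0$. Hence
\[
-\nabla w=|\phi'|\nabla\hK{x},
\qquad
\|-\nabla w\|_{K^*}=|\phi'|\,\big\|\nabla\hK{x}\big\|_{K^*}=|\phi'| ,
\]
where the last equality uses \eqref{relation-x-y}. On the other hand, $\mathcal R_K(w)=\phi'(r)\,x\cdot\nabla\hK{x}/\hK{x}=\phi'(r)$. Thus $\|-\nabla w\|_{K^*}=|\mathcal R_K(w)|$. This is exactly where the choice of $-\nabla u$, rather than $\nabla u$, matters for non-even $\hK{\cdot}$. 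It remains to apply \eqref{E1-1} and \eqref{radial-Heisenberg-CKN-ineq} to $w$:
\[
\int|\mathcal R_K w|^2+\int\hK{x}^2w^2\ge N\int w^2 ,
\]
together with the product form. Both follow once the identities are extended from $C_c^\infty(\RN\setminus\{o\})$ to $w$.

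\emph{Main obstacle: the approximation.} $w$ is only in $W^{1,2}$ with a finite weighted moment, and it is Lipschitz-type at most. I would approximate $\phi$ by smooth functions $\phi_n$ compactly supported in $(0,\infty)$, using cutoffs $\eta(r/\varepsilon)$ near $0$ and $\eta(\varepsilon r)$ near $\infty$ followed by mollification in $r$. Through the polar formula $dx=r^{N-1}dr\,d\mu_K$, each term becomes a one-dimensional weighted integral, and I need the cutoff errors to vanish. Near $0$ the error is controlled by
\[
\int_\varepsilon^{2\varepsilon}\phi^2\varepsilon^{-2}r^{N-1}\,dr\lesssim \varepsilon^{N-2}\sup\phi^2 .
\]
For $N\ge3$ this vanishes provided $\phi$ is bounded near $0$; for $N=2$ I would use a logarithmic cutoff instead. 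Boundedness near $0$ itself needs an argument (for example, $\phi$ is monotone and in $L^2$), or I would use the alternative of applying the divergence-theorem identity directly to $w$ with truncations $\min(w,M)$. Near infinity, $\int\hK{x}^2w^2<\infty$ handles the error. Alternatively, in (ii) I can reduce to (i) by scaling $u(\lambda x)$ and optimizing over $\lambda$, so only (i) needs the approximation.

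\emph{Step 3: sharpness.} For $u=\kappa e^{-\hK{x}^2/2}$ we get $\mathcal R_K u=-\hK{x}u$, so the remainder in \eqref{E1-1} vanishes, and $\|-\nabla u\|_{K^*}=\hK{x}u$; hence (i) holds with equality. For (ii), take $u=\beta e^{-\hK{x}^2/(2\lambda^2)}$. Then $\mathcal R_K u=-\hK{x}u/\lambda^2$, and the remainder in \eqref{c3} vanishes because
\[
\|\hK{x}u\|/\|\mathcal R_Ku\|=\lambda^2 .
\]
The radial computation of Step 2 then gives equality in (ii).
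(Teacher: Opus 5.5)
Your proposal is correct and follows the paper's own route. You pass to $u^K$ via Lemma~\ref{l2}, Proposition~\ref{p1}(ii) with $v(r)=r^2$ and equimeasurability, identify $\|-\nabla u^K\|_{K^*}=|\mathcal R_K(u^K)|$, apply \eqref{E1-1} and \eqref{c3} to $u^K$, and check equality for the Gaussians. You are in fact more careful than the paper, which asserts that identification without justification and applies the identities (proved only on $C_c^\infty(\RN\setminus\{o\})$) to $u^K$ with no approximation argument. One correction: monotonicity plus $L^2$ does not make $\phi$ bounded near $0$ (it can blow up there), so rely on your truncation $\min(w,M)$ at that step.
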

\begin{proof}
($i$)
By the anisotropic P\'olya-Szeg\"{o} principle \eqref{f54}, \eqref{E1-1}, using the
equimeasurability of $u^2$ and $(u^K)^2$ and applying Proposition \ref{p1} $(ii)$ to $|u|$ with
$p=2$ and $v(r)=r^2$ due to $u^K=(|u|)^K$, one has
\begin{align}
\int_{\RN} \|-\nabla u(x)\|_{K^*}^2dx
&\ge
\int_{\RN} \|-\nabla u^K(x)\|_{K^*}^2dx\label{f32}\\
&=
\int_{\RN} |\mathcal R_K \big(u^K\big)|^2dx\notag\\
&\ge
N\int_{\RN} \big(u^K(x)\big)^2dx
-
\int_{\RN} \hK{x}^2 \big(u^K(x)\big)^2dx\label{f33}\\
&\ge
N\int_{\RN} u(x)^2dx
-
\int_{\RN} \hK{x}^2 u(x)^2dx\label{f34}.
\end{align}
This proves inequality \eqref{f12}.

Assume  $u(x)=\kappa e^{-\frac12\hK{x}^2}$ for $\kappa\ge0$. Then $u$ is radial, and hence $u=u^K$ up to a null set. Moreover,
$
\mathcal{R}_K(u)+\hK{x} u=0.
$
Thus, the equality holds in \eqref{f32} \eqref{f33} and \eqref{f34}.
Hence the equality holds in \eqref{f12}, and the constant $N$ is sharp.

\noindent
($ii$) By \eqref{f54}, \eqref{f35}, \eqref{c3} and $u^2$ and
$(u^K)^2$ are equimeasurable, one has
\begin{align}
&\bigg(\int_{\RN} \|-\nabla u(x)\|_{K^*}^2dx\bigg)^{\frac{1}{2}}\bigg(\int_{\RN}\hK{x}^2u^2dx\bigg)^{\frac{1}{2}}\notag\\
&\ge
\bigg(\int_{\RN} \|-\nabla u^K(x)\|_{K^*}^2dx\bigg)^{\frac{1}{2}}\bigg(\int_{\RN}\hK{x}^2u^2dx\bigg)^{\frac{1}{2}}\label{f40}\\
&\ge
\bigg(\int_{\RN} \big|\mathcal{R}_K(u^K)\big|^2dx \bigg)^{\frac{1}{2}}\bigg(\int_{\RN}\hK{x}^2\big(u^K\big)^2dx\bigg)^{\frac{1}{2}}\label{f41}\\
&\ge\frac{N}{2}\int_{\RN}\big(u^K\big)^2dx\label{f42}\\
&=\frac{N}{2}\int_{\RN}u^2dx.\notag
\end{align}
This proves inequality \eqref{f71}.

When $\beta=0$, the equality in \eqref{f71} is immediate. Hence, it remains to
consider the case $\beta>0$.
Assume that
$$
u(x)=\beta \exp\!\left(-\frac{1}{2\lambda^2}\hK{x}^2\right)
$$
for some constants $\beta>0$ and $\lambda>0$. 
Then $u$ is radial, and hence $u=u^K$ up to a null set. Moreover,
$
\mathcal R_K(u)
=
-\frac{1}{\lambda^2}\hK{x}u(x).
$
Hence,
$$
\|\mathcal R_K(u)\|
=
\frac{1}{\lambda^2}\big\|\hK{x}u\big\|,
$$
and then,
$$
\lambda
=
\bigg(
\frac{\big\|\hK{x}u\big\|}{\|\mathcal R_K(u)\|}
\bigg)^{\frac{1}{2}}.
$$
It follows that
\begin{align*}
\frac{\big\|\hK{x}u^K\big\|^{\frac{1}{2}}\mathcal{R}_K(u^K)}{\|\mathcal{R}_K(u^K)\|^{\frac{1}{2}}}+\frac{\|\mathcal{R}_K(u^K)\|^{\frac{1}{2}}\hK{x}u^K}{\big\|\hK{x}u^K\big\|^{\frac{1}{2}}}=
-\frac{1}{\lambda}\hK{x}u^K
+
\frac{1}{\lambda}\hK{x}u^K
=0.
\end{align*}
Therefore, the equality holds in \eqref{f40}, \eqref{f41} and \eqref{f42}. 
Hence, the equality holds in inequality \eqref{f71}, and the constant $\frac{N}{2}$ is sharp. 
\end{proof}

\section{Sharp constants and optimizers for
the anisotropic CKN inequality}\label{s5}
We now turn to the sharp anisotropic CKN inequalities. The anisotropic CKN identities \eqref{f55}, \eqref{f75}, \eqref{f56} and \eqref{f61} with remainders obtained in Section \ref{s3}
give the following two constants,
$$
C_1(N,a,b)=\bigg|\frac{N-a-b-1}{2}\bigg|
\ \ \mathrm{and}\ \
C_2(N,a,b)=\bigg|\frac{N-3b+a-3}{2}\bigg|.
$$
 The comparison of $C_1$ and $C_2$ determines
which identity yields the stronger anisotropic CKN inequality in each part of the $(a,b)$-parameter plane.
Indeed,
$
C_1(N,a,b)^2-C_2(N,a,b)^2
=
(b-a+1)(N-2b-2).
$
Hence, $C_1(N,a,b)> C_2(N,a,b)$ in the region where
$(b-a+1)(N-2b-2)>0$, $C_2(N,a,b)>C_1(N,a,b)$ in the region where
$(b-a+1)(N-2b-2)<0$, and $C_2(N,a,b)=C_1(N,a,b)$ if $b=a-1$ or $b=\frac{N-2}{2}$. Here the line $b=a-1$ 
is the critical line where the anisotropic CKN equality equations reduce to the Hardy
case and where nonzero extremal functions are not attained in a completion space, and the line 
$b=\frac{N-2}{2}$ only corresponds to the coincidence
of the two constants. Accordingly, we use the following disjoint decomposition of the noncritical part of the $(a,b)$-parameter plane; (see Figure \ref{fig:1}):
\begin{align*}
\mathcal{A}_1:&=\Big\{(a,b)\in\R^2\Big|\ b>a-1,\ b\leq\frac{N-2}{2}\Big\};\\
\mathcal{A}_2:&=\Big\{(a,b)\in\R^2\Big|\ b<a-1,\ b\geq\frac{N-2}{2}\Big\};\\
\mathcal{B}_1:&=\Big\{(a,b)\in\R^2\Big|\ b<a-1,\ b<\frac{N-2}{2}\Big\};\\
\mathcal{B}_2:&=\Big\{(a,b)\in\R^2\Big|\ b>a-1,\ b>\frac{N-2}{2}\Big\}.
\end{align*}
Let $\mathcal{A}:=\mathcal{A}_1\cup \mathcal{A}_2$ and $\mathcal{B}:=\mathcal{B}_1\cup \mathcal{B}_2$.
Note that $\mathcal{A}\cup\mathcal{B}=\big\{(a,b)\in\R^2\big|\ a\neq b+1\big\}.$
The sharpness will be proved below by deriving the equality equations from the
vanishing of the corresponding remainders and by showing that the extremal functions
belong to the completion space $\mathcal C_{K,a,b}$,
the completion of
$C_c^\infty(\RN\setminus\{o\})$ under the norm
$$
\|u\|_{\mathcal C_{K,a,b}}
=
\bigg(
\int_{\RN}\frac{|\mathcal R_K(u)|^2}{\hK{x}^{2b}}dx
+
\int_{\RN}\frac{u^2}{\hK{x}^{2a}}dx
\bigg)^{\frac12}.
$$

\begin{theorem}\label{t3}
Let $K\in\Ksc$ and $a,b\in\R$. The anisotropic CKN inequality (\ref{t-1}) holds for all
$u\in C_c^\infty(\RN\setminus\{o\})$, with the zero function
giving only the trivial equality case. Its sharp constants are as follows.
\begin{itemize}
\item[(i)] In the region $\mathcal{A}$, the sharp constant is
 $\widetilde{C}(N,a,b)=\big|\frac{N-a-b-1}{2}\big|$ and it is achieved in $\mathcal C_{K,a,b}$ by the nonzero functions
$$
u(x)=\Phi\big(\sigma_K(x)\big) e^{\frac{t}{b+1-a}\hK{x}^{b+1-a}},
$$
where $t<0$ in $\mathcal{A}_1$, $t>0$ in $\mathcal{A}_2$, and $0\not\equiv\Phi\in L^2(\partial K,d\mu_K)$.

\item[(ii)] In the region $\mathcal{B}$, the sharp constant is
$\widetilde{C}(N,a,b)=\big|\frac{N-3b+a-3}{2}\big|$ and it is achieved in $\mathcal C_{K,a,b}$ by the nonzero functions
$$
u(x)=\Phi\big(\sigma_K(x)\big)\hK{x}^{2b+2-N}e^{\frac{t}{b+1-a}\hK{x}^{b+1-a}},
$$
where $t>0$ in $\mathcal{B}_1$, $t<0$ in $\mathcal{B}_2$, and $0\not\equiv\Phi\in L^2(\partial K,d\mu_K)$.

\item[(iii)] When $a=b+1$, the sharp constant is $\widetilde{C}(N,b+1,b)=\frac{|N-2(b+1)|}{2}$, and cannot be achieved by the nonzero functions in $\mathcal C_{K,a,b}$.
\end{itemize}
\end{theorem}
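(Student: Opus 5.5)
The plan is to reduce everything to the identities of Section \ref{s3} and then do a one-dimensional analysis along the rays $r\mapsto r\sigma$, $\sigma\in\partial K$, using the anisotropic polar formula with $d\mu_K$.

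\textbf{Step 1: validity and the sign of the constants.} In each region I first check that the constant appearing in the relevant corollary is nonnegative and equals the absolute value claimed.
\begin{itemize}
\item In $\mathcal A_1$ we have $a+b+1<2b+2\le N$, so Corollary \ref{cor:A1-CKN} gives \eqref{t-1} with $\frac{N-a-b-1}{2}>0$.
\item In $\mathcal A_2$ the symmetric argument applies with Corollary \ref{cor:A2-CKN}.
\item In $\mathcal B_1$ we have $N-3b+a-3>N-2b-2>0$, so Corollary \ref{cor:B1-CKN} applies.
\item In $\mathcal B_2$ Corollary \ref{cor:B2-CKN} applies in the same way.
\item On $a=b+1$, identity \eqref{f60} with $\lambda=2b$, combined with the elementary bound $XY\ge \frac{|N-2b-2|}{2}\,Y^2$, gives \eqref{t-1} with constant $\frac{|N-2b-2|}{2}$.
\end{itemize}
The comparison $C_1^2-C_2^2=(b-a+1)(N-2b-2)$ explains why the larger constant is selected in each region. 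Sharpness will come from exhibiting equality cases, so it is not needed to prove optimality by comparison.

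\textbf{Step 2: extremals via the vanishing of the remainder.} For $u(x)=\Phi(\sigma_K(x))e^{\frac{t}{\beta}\hK{x}^{\beta}}$ with $\beta=b+1-a$, the derivative $\mathcal R_K$ acts only radially. Hence $\mathcal R_K(u)=t\hK{x}^{b-a}u$, so $\|\hK{x}^{-b}\mathcal R_K u\|=|t|\,\|\hK{x}^{-a}u\|$. The normalized combination in \eqref{f55} (or in \eqref{f75}) is then $\big(\operatorname{sgn}(t)+1\big)\cdot(\cdots)$ (respectively $\big(-\operatorname{sgn}(t)+1\big)\cdot(\cdots)$), which vanishes exactly for $t<0$ in $\mathcal A_1$ and $t>0$ in $\mathcal A_2$. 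For (ii) I set $u=\Phi\,\hK{x}^{2b+2-N}e^{\frac t\beta\hK{x}^\beta}$. Then $\mathcal R_K u=\big((2b+2-N)\hK{x}^{-1}+t\hK{x}^{b-a}\big)u$, and one checks that the bracket in \eqref{f56} and \eqref{f61} vanishes for the stated signs of $t$.

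Next I verify finiteness of the three integrals through the polar formula. They reduce to $\|\Phi\|_{L^2(d\mu_K)}^2\int_0^\infty r^{\gamma}e^{2tr^\beta/\beta}\,dr$. The exponential gives decay at the relevant end ($\infty$ if $\beta>0$, $0$ if $\beta<0$). At the other end the region inequalities give $\gamma>-1$: for example, in $\mathcal A_1$ we get $N-2a>0$ and $N-a-b-1>0$, and in $\mathcal B_1$ we get $N-2b-2>0$. This bookkeeping is routine.

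\textbf{Step 3: extremals lie in $\mathcal C_{K,a,b}$ and the identities pass to the limit.} First, approximate $\Phi$ in $L^2(\partial K,d\mu_K)$ by smooth functions. No angular derivative enters the norm, so this is harmless. Second, multiply by radial cutoffs $\eta_\varepsilon(\hK{x})$ supported in $\varepsilon<r<1/\varepsilon$, and control the cutoff error in the $\mathcal R_K$ term by the same power-integrability from Step 2. Third, extend the identities \eqref{f55}--\eqref{f61} to $\mathcal C_{K,a,b}$ by continuity. The middle term $\int u^2\hK{x}^{-a-b-1}$ is continuous on the completion by weighted Cauchy--Schwarz, and the remainder is continuous as well. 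Consequently the constructed $u$ give equality, and the constant is sharp and attained.

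\textbf{Step 4: the critical line $a=b+1$.} For sharpness I take truncations of $\hK{x}^{-(N-2b-2)/2}$, mollified logarithmically in $r$. Along these the Hardy remainder in \eqref{f60} is negligible relative to $\int u^2\hK{x}^{-2b-2}$, which forces the ratio toward $\frac{|N-2b-2|}{2}$.

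For non-attainment, suppose $u\in\mathcal C_{K,b+1,b}$ attains equality. Then both the remainder in \eqref{f60}, extended to the completion, and the Cauchy--Schwarz defect vanish, so $\mathcal R_K u=-\frac{N-2b-2}{2}\hK{x}^{-1}u$ a.e. Solving along rays gives $u=\Phi(\sigma)\,r^{-(N-2b-2)/2}$. But then $\int u^2\hK{x}^{-2b-2}=\|\Phi\|^2\int_0^\infty r^{-1}\,dr=\infty$ unless $\Phi=0$, so $u=0$.

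The main obstacle is Step 3 together with this last deduction. One has to justify that elements of the completion have a weak radial derivative along almost every ray, so that the ODE can be solved in the class of functions that are absolutely continuous in $r$ for $\mu_K$-a.e. $\sigma$. My first attempt would be to do this by working with $v(r,\sigma)=u(r\sigma)$ in $L^2(dr\,d\mu_K)$ with power weights and applying Fubini.
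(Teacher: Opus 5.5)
Your route is essentially the paper's: the Section~3 identities, explicit extremals checked with the polar formula, approximation of $\Phi$ in $L^2(\partial K,d\mu_K)$ plus radial cut-offs, and logarithmic cut-offs with the ray ODE on $a=b+1$. However, Step~3 fails as stated on part of $\mathcal A$.

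\textbf{The gap.} For $v_\varepsilon=\eta_\varepsilon(\|x\|_K)u$, the cut-off error in the derivative term is $\|\Phi\|^2_{L^2(d\mu_K)}\int_0^\infty|\eta_\varepsilon'(r)|^2F(r)^2r^{N-1-2b}\,dr$. With $|\eta_\varepsilon'|\sim 1/r$ on the transition layers, this is governed by the layer integrals of $F^2r^{N-2b-3}$, i.e.\ of $u^2\|x\|_K^{-2b-2}$. That integral is \emph{not} among the three you checked in Step~2.

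On the line $2b=N-2$, which belongs to both $\mathcal A_1$ and $\mathcal A_2$, this density is $F^2r^{-1}$. Moreover $F\to1$ at the non-decaying end ($r\to0$ in $\mathcal A_1$, $r\to\infty$ in $\mathcal A_2$). For instance, in $\mathcal A_1$ with $\eta_\varepsilon(r)=\eta(r/\varepsilon)$ near the origin, the error tends to $\|\Phi\|^2_{L^2(d\mu_K)}\int_1^2|\eta'(s)|^2s\,ds>0$. So $v_\varepsilon\not\to u$ in $\mathcal C_{K,a,b}$.

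The missing ingredient is the logarithmic cut-off you already use in Step~4. Take $\varphi_\varepsilon$ switching on $(\varepsilon^2,\varepsilon)\cup(\varepsilon^{-1},\varepsilon^{-2})$ with $|\varphi_\varepsilon'(r)|\le M/(r\log(1/\varepsilon))$. Then the layer integral of $F^2r^{N-2b-3}$ is $O(\log(1/\varepsilon))$, so the error is $O(1/\log(1/\varepsilon))$. This is exactly how the paper proves membership in $\mathcal C_{K,a,b}$. Off this line, and throughout $\mathcal B$, one has $\int u^2\|x\|_K^{-2b-2}dx<\infty$, and an ordinary cut-off does work.

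\textbf{Continuity on the completion.} The same degeneracy undermines your continuity argument. For $2b=N-2$, identity \eqref{f60} with $\lambda=2b$ gives no control of $\big\|u\,\|x\|_K^{-b-1}\big\|$, which is in fact infinite for your extremals. So the bound $\int u^2\|x\|_K^{-a-b-1}dx\le\big\|u\,\|x\|_K^{-a}\big\|\,\big\|u\,\|x\|_K^{-b-1}\big\|$ is useless there. Use instead the inequality itself, $\int w^2\|x\|_K^{-a-b-1}dx\le(2\widetilde C)^{-1}\|w\|^2_{\mathcal C_{K,a,b}}$, which is valid since $\widetilde C>0$ on $\mathcal A\cup\mathcal B$.

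\textbf{Smaller points.}
\begin{enumerate}
\item In (ii), for $u$ of the stated form, the bracket in \eqref{f56} (resp.\ \eqref{f61}) equals $(\alpha t-\alpha^{-1})\|x\|_K^{-a}u$ (resp.\ $(\alpha t+\alpha^{-1})\|x\|_K^{-a}u$), where $\alpha^2=\big\|u\,\|x\|_K^{-a}\big\|/\big\|\mathcal R_K(u)\|x\|_K^{-b}\big\|$. So it vanishes only if $\big\|\mathcal R_K(u)\|x\|_K^{-b}\big\|=|t|\,\big\|u\,\|x\|_K^{-a}\big\|$. Since $\mathcal R_K(u)\|x\|_K^{-b}=(t\|x\|_K^{-a}-c\|x\|_K^{-b-1})u$ with $c=N-2b-2$, this is not automatic. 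It amounts to the integration by parts $2t\int_0^\infty r^{\beta-c-1}e^{2tr^\beta/\beta}dr=c\int_0^\infty r^{-c-1}e^{2tr^\beta/\beta}dr$, whose boundary terms vanish under the sign conditions.
\item In Step~2, at a non-decaying end $r\to\infty$ (regions $\mathcal A_2$ and $\mathcal B_1$), the condition needed is $\gamma<-1$, not $\gamma>-1$.
\item Your concern about solving the ray ODE for elements of the completion is legitimate. The paper writes down the solution without addressing it.
\end{enumerate}
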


\begin{proof}
Let $r=\hK{x}$. The case $u\equiv 0$ is trivial. Without loss of generality, we assume that
$u\not\equiv 0$.
We first prove the inequality and derive the corresponding equality equations from
the remainder identities established in Section \ref{s3}. The sharpness will follow once the corresponding equality equations are solved and
the extremal functions are shown to belong to the completion space
$\mathcal C_{K,a,b}$.

\medskip
\noindent\emph{The region $\mathcal A$.}
Let $(a,b)\in\mathcal A$. Then
$C_1(N,a,b)\ge C_2(N,a,b)$ for $(a,b)\in\mathcal A$. Thus anisotropic CKN identities \eqref{f55} and \eqref{f75} give the following anisotropic CKN inequality
\begin{align*}
\Big(\int_{\RN}\frac{\big|\mathcal{R}_K(u)\big|^2}{\hK{x}^{2b}}dx\Big)^{\frac{1}{2}}\Big(\int_{\RN}\frac{u^2}{\hK{x}^{2a}}dx\Big)^{\frac{1}{2}}\geq \Big|\frac{N-a-b-1}{2}\Big|\int_{\RN}\frac{u^2}{\hK{x}^{a+b+1}}dx.
\end{align*} 
The equality holds if and only if the corresponding nonnegative
remainder in \eqref{f55} or \eqref{f75} vanishes, namely
$$
\mathrm{sgn}(N-a-b-1)
\frac{
\big(\int_{\RN}u^2r^{-2a}dx\big)^{\frac14}
}{
\big(\int_{\RN}|\mathcal R_K(u)|^2r^{-2b}dx\big)^{\frac14}
}
\frac{\mathcal R_K(u)}{r^b}
+
\frac{
\big(\int_{\RN}|\mathcal R_K(u)|^2r^{-2b}dx\big)^{\frac14}
}{
\big(\int_{\RN}u^2r^{-2a}dx\big)^{\frac14}
}
\frac{u}{r^a}
=0
$$
almost everywhere, where $\mathrm{sgn}(t)$ is the sign of $t\in \R\setminus{0}$ and $\mathrm{sgn}(0)=0$. Equivalently,
$
\mathcal R_K(u)=t r^{b-a}u,
$
 where
$$
t
=
-\mathrm{sgn}(N-a-b-1)
\frac{
\big(\int_{\RN}|\mathcal R_K(u)|^2r^{-2b}dx\big)^{\frac12}
}{
\big(\int_{\RN}u^2r^{-2a}dx\big)^{\frac12}
}.
$$
Since $\mathcal R_K(u)=\partial_r u$, then
$
\partial_r u=t r^{b-a}u 
$
and hence, 
\begin{align*}
u(x)=\Phi(\sigma_K(x))
\exp\!\bigg(\frac{t\hK{x}^{b+1-a}}{b+1-a}\bigg),
\end{align*}
where $t<0$ in $\mathcal A_1$ and $t>0$ in $\mathcal A_2$. Therefore, once the above extremal functions are shown to belong to $\mathcal C_{K,a,b}$,
the constant $$C_1(N,a,b)=\frac{|N-a-b-1|}{2}
$$ is sharp in $\mathcal A$.

\medskip
\noindent\emph{The region $\mathcal B$.}
Let $(a,b)\in\mathcal B$. Then $C_2(N,a,b)>C_1(N,a,b).$
If $(a,b)\in\mathcal B_1$, we use
the identity \eqref{f56}; if $(a,b)\in\mathcal B_2$, we use the
identity \eqref{f61}. In both cases, these identities yield
\begin{align}\label{f58}
\bigg(\int_{\RN}\frac{|\mathcal R_K(u)|^2}{\hK{x}^{2b}}dx\bigg)^{\frac12}
\bigg(\int_{\RN}\frac{u^2}{\hK{x}^{2a}}dx\bigg)^{\frac12}
\ge
\frac{|N-3b+a-3|}{2}
\int_{\RN}\frac{u^2}{\hK{x}^{a+b+1}}dx.
\end{align}
The equality in \eqref{f58} is obtained from the vanishing of the nonnegative remainder
in the corresponding identity. In both subregions, one has
$
\mathcal R_K(u)
=
t r^{b-a}u-(N-2b-2)r^{-1}u
$
for some constant $t\in\mathbb R$. Equivalently,
$
\partial_r u
=
t r^{b-a}u
-
(N-2b-2)r^{-1}u .
$
Hence,
\begin{align*}
u(x)=
\Phi(\sigma_K(x))\hK{x}^{2b+2-N}
\exp\!\bigg(\frac{t\hK{x}^{b+1-a}}{b+1-a}\bigg),
\end{align*}
where $t>0$ in $\mathcal B_1$ and $t<0$ in $\mathcal B_2$. Hence, once the above extremal functions are checked to belong to $\mathcal C_{K,a,b}$,
the constant $$C_2(N,a,b)=\frac{|N-3b+a-3|}{2}
$$ is sharp in $\mathcal B$.

\medskip
\noindent\emph{The critical line $a=b+1$.}
Assume $a=b+1$. Then
$$
\frac{u(x)^2}{\|x\|_K^{2a}}=\frac{u(x)^2}{\|x\|_K^{a+b+1}}=\frac{u(x)^2}{\|x\|_K^{2b+2}}
$$
and hence the anisotropic CKN inequality reduces to a weighted anisotropic Hardy-type inequality.
Taking
$
\lambda=2b
$
in the weighted anisotropic $L^2$-Hardy identity \eqref{f60}, one has the following anisotropic Hardy identity
\begin{align}\label{f67}
\!\!\!\!\!\!\int_{\RN}
\frac{|\mathcal R_K(u)|^2}{\|x\|_K^{2b}}dx
\!=\!\!
\bigg(\frac{N-2b-2}{2}\bigg)^2
\!\!\int_{\RN}
\frac{u^2}{\|x\|_K^{2a}}dx+\!\!
\int_{\RN}
\bigg|
\frac{\mathcal R_K(u)}{\|x\|_K^b}
+\!
\frac{N-2b-2}{2}
\frac{u}{\|x\|_K^{b+1}}
\bigg|^2dx.
\end{align}
It follows that
\begin{align}\label{f69}
\int_{\RN}
\frac{|\mathcal R_K(u)|^2}{\|x\|_K^{2b}}dx
\ge
\bigg(\frac{N-2b-2}{2}\bigg)^2
\int_{\RN}
\frac{u^2}{\|x\|_K^{2a}}dx.
\end{align}
Hence,
\begin{align}\label{f66}
\widetilde C(N,b+1,b)\ge \frac{|N-2b-2|}{2}.
\end{align}
If the equality in \eqref{f69} was attained by a nonzero function
$u\in\mathcal C_{K,b+1,b}$, then the Hardy remainder in \eqref{f67} would vanish. Hence, one has
$$
\mathcal R_K(u)+\frac{N-2b-2}{2}\frac{u}{\hK{x}}=0,
$$
and therefore,
$
u(x)=\Phi(\sigma_K(x))\hK{x}^{-\frac{N-2b-2}{2}}.
$
However,
$$
\int_{\RN}\frac{u^2}{\hK{x}^{2b+2}}dx
=
\int_{\partial K}|\Phi(\sigma)|^2d\mu_K(\sigma)
\int_0^\infty \frac{ds}{s},
$$
which is finite only when $\Phi=0$  a.e. on $\partial K$. Thus, no nonzero function in
$\mathcal C_{K,b+1,b}$ attains the equality in \eqref{f69}.

Let $0<\varepsilon<1$. Define
$\varphi_\varepsilon\in C_c^\infty(0,\infty)$ such that
$0\le\varphi_\varepsilon\le1$, 
$$\varphi_\varepsilon(r)=
\begin{cases}
1, & \varepsilon\le r\le \varepsilon^{-1},\\
0, & 0<r\le \varepsilon^2 \ \text{or}\ r\ge \varepsilon^{-2},
\end{cases}$$
and
$$
|\varphi_\varepsilon'(r)|
\le
\frac{M}{r\log(1/\varepsilon)}
\quad\text{on }
(\varepsilon^2,\varepsilon)\cup(\varepsilon^{-1},\varepsilon^{-2})
$$
where the constant $M>0$ is independent of $\varepsilon$.
Define
$
u_\varepsilon(x)=\varphi_\varepsilon(\hK{x})\hK{x}^{-\frac{N-2b-2}{2}}.
$
Then
$$
\frac{\mathcal R_K(u_\varepsilon)}{\hK{x}^{b}}
+
\frac{N-2b-2}{2}\frac{u_\varepsilon}{\hK{x}^{b+1}}
=
\varphi_\varepsilon'(\hK{x})\hK{x}^{-\frac{N-2b-2}{2}-b}.
$$
By the anisotropic polar formula, one has
$$
\int_{\RN}\frac{u_\varepsilon^2}{\hK{x}^{2b+2}}dx
=\mu_K(\partial K)\int_0^\infty \varphi_\varepsilon(r)^2\frac{dr}{r}\ge
\mu_K(\partial K)\int_\varepsilon^{\varepsilon^{-1}} \frac{dr}{r}
= 2\mu_K(\partial K)\log(1/\varepsilon),
$$
\begin{align*}
\int_{\RN}
\Bigg|
\frac{\mathcal R_K(u_\varepsilon)}{\hK{x}^{b}}
+
\frac{N-2b-2}{2}\frac{u_\varepsilon}{\hK{x}^{b+1}}
\Bigg|^2dx
&=
\mu_K(\partial K)\int_0^\infty |\varphi_\varepsilon'(r)|^2rdr\\
&\le\frac{\mu_K(\partial K)M^2}{\log(1/\varepsilon)^2}\Big(\int_{\varepsilon^2}^\varepsilon \frac{dr}{r}+\int_{\varepsilon^{-1}}^{\varepsilon^{-2}} \frac{dr}{r}\Big)\\
&=
\frac{2\mu_K(\partial K)M^2}{\log(1/\varepsilon)}.
\end{align*}
Therefore,
$$
\frac{
\int_{\RN}
\Big|
\frac{\mathcal R_K(u_\varepsilon)}{\hK{x}^{b}}
+
\frac{N-2b-2}{2}\frac{u_\varepsilon}{\hK{x}^{b+1}}
\Big|^2dx
}{
\int_{\RN}\frac{u_\varepsilon^2}{\hK{x}^{2b+2}}dx
}
\to0
\quad \text{as }\varepsilon\to0.
$$
Using the anisotropic Hardy identity \eqref{f67}, one gets
$$
\frac{
\Big(\int_{\RN}\frac{|\mathcal R_K(u_\varepsilon)|^2}{\hK{x}^{2b}}dx\Big)^{\frac12}
\Big(\int_{\RN}\frac{u_\varepsilon^2}{\hK{x}^{2b+2}}dx\Big)^{\frac12}
}{
\int_{\RN}\frac{u_\varepsilon^2}{\hK{x}^{2b+2}}dx
}
\to \frac{|N-2b-2|}{2}\quad \text{as }\varepsilon\to0.
$$
Thus, 
\begin{align}\label{f68}
\widetilde C(N,b+1,b)\le \frac{|N-2b-2|}{2}.
\end{align}
Together with \eqref{f66} and \eqref{f68}, one gets the sharp constant
$$
\widetilde C(N,b+1,b)
=
\frac{|N-2(b+1)|}{2}.
$$

\medskip
\noindent\emph{Attainability in the completion space.}
It remains to prove that the extremal functions obtained above belong to
$\mathcal C_{K,a,b}$. 
Assume that $(a,b)\in\mathcal A\cup\mathcal B$. Let
$
u(x)=\Phi(\sigma_K(x))F(\hK{x}),
\
\Phi\in L^2(\partial K,d\mu_K),
$
where $F:(0,\infty)\to\mathbb R$ is defined by
$$
F(r):=
\begin{cases}
\exp\big(\frac{tr^{b+1-a}}{b+1-a}\big),
& \text{if } (a,b)\in\mathcal A,\\[2.5mm]
r^{2b+2-N}
\exp\big(\frac{tr^{b+1-a}}{b+1-a}\big),
& \text{if } (a,b)\in\mathcal B.
\end{cases}
$$
The sign of $t$ in the four regions $\mathcal A_1,\mathcal A_2,\mathcal B_1$
and $\mathcal B_2$ gives
$$
\int_0^\infty |F(r)|^2r^{N-1-2a}dr
+
\int_0^\infty |F'(r)|^2r^{N-1-2b}dr
<\infty.
$$
Since $d\mu_K$ is a finite Borel measure on the compact set $\partial K$ and
the restrictions of $C^1$ functions are dense in $L^2(\partial K,d\mu_K)$,  there exists a sequence $\Phi_j:\partial K\to\mathbb R$ such that
$
x\mapsto \Phi_j(\sigma_K(x))\in C^1(\RN\setminus\{o\})
$ 
and
$
\Phi_j\to \Phi$
in $L^2(\partial K,d\mu_K)
$
as $j\to\infty$.
Let
$
u_j(x):=\Phi_j\big(\sigma_K(x)\big)F(\hK{x}).
$
Using the anisotropic polar formula and
$
\mathcal R_K\big(\Phi_j\big(\sigma_K(x)\big)F(\hK{x})\big)
=
\Phi_j\big(\sigma_K(x)\big)F'(\hK{x})
$, one has
\begin{align*}
\|u_j-u\|_{\mathcal C_{K,a,b}}^2
&=
\int_{\RN}
\frac{|u_j-u|^2}{\hK{x}^{2a}}dx
+
\int_{\RN}
\frac{|\mathcal R_K(u_j-u)|^2}{\hK{x}^{2b}}dx  \\
&=
\|\Phi_j-\Phi\|_{L^2(\partial K,d\mu_K)}^2
\bigg[
\int_0^\infty |F(r)|^2r^{N-1-2a}dr
+
\int_0^\infty |F'(r)|^2r^{N-1-2b}dr
\bigg].
\end{align*}
It follows that
$
u_j\to u$
 as $j\to\infty$ in $\mathcal C_{K,a,b}
$.

For each fixed $j$, define
$
v_{j,\varepsilon}(x):=\varphi_\varepsilon(\hK{x})u_j(x)
$ for $x\in\RN$.
We claim that, for each fixed $j$,
$
\|v_{j,\varepsilon}-u_j\|_{\mathcal C_{K,a,b}}\to0$ as $\varepsilon\to0.
$
In fact, one has
$
v_{j,\varepsilon}(x)-u_j(x)=\big(\varphi_\varepsilon(\hK{x})-1\big)u_j(x).
$
Since $0\le\varphi_\varepsilon\le1$ and $\varphi_\varepsilon\to1$ a.e. as $\varepsilon\to0$, then the dominated
convergence theorem gives
$$
\int_{\RN}
\frac{|v_{j,\varepsilon}-u_j|^2}{\hK{x}^{2a}}dx
=
\int_{\RN}
|1-\varphi_\varepsilon|^2
\frac{|u_j|^2}{\hK{x}^{2a}}dx
\to0.
$$
Since
$
\mathcal R_K(v_{j,\varepsilon}-u_j)
=
(\varphi_\varepsilon-1)\mathcal R_K(u_j)
+
u_j\mathcal R_K(\varphi_\varepsilon)
$ and $\mathcal R_K\big(\varphi_\varepsilon(\|x\|_K)\big)
=
\varphi_\varepsilon'(\|x\|_K)
\sigma_K(x)\cdot\nabla\|x\|_K
=
\varphi_\varepsilon'(\|x\|_K)$,
then 
\begin{align*}
\int_{\RN}
\frac{|\mathcal R_K(v_{j,\varepsilon}-u_j)|^2}{\hK{x}^{2b}}dx\le
2\int_{\RN}
|1-\varphi_\varepsilon|^2
\frac{|\mathcal R_K(u_j)|^2}{\hK{x}^{2b}}dx+
2\int_{\RN}
\frac{|u_j|^2|\varphi_\varepsilon'(\|x\|_K)|^2}{\hK{x}^{2b}}dx.
\end{align*}
The first term tends to zero by the dominated
convergence theorem. It remains to estimate
the second term. By the definition of $\varphi_\varepsilon$ above, one has
\begin{align*}
\int_{\RN}
\frac{|u_j|^2|\varphi_\varepsilon'(\hK{x})|^2}
{\hK{x}^{2b}}dx
&\le
\frac{M}{\log^2(1/\varepsilon)}
\int_{\{\varepsilon^2<\hK{x}<\varepsilon\}
\cup
\{\varepsilon^{-1}<\hK{x}<\varepsilon^{-2}\}}
\frac{|u_j|^2}{\hK{x}^{2b+2}}dx \\
&=
\frac{M\|\Phi_j\|_{L^2(\partial K,d\mu_K)}^2}{\log^2(1/\varepsilon)}
\int_{(\varepsilon^2,\varepsilon)\cup(\varepsilon^{-1},\varepsilon^{-2})}
|F(r)|^2r^{N-2b-3}dr.
\end{align*}
We claim that
$$
\int_{(\varepsilon^2,\varepsilon)\cup(\varepsilon^{-1},\varepsilon^{-2})}
|F(r)|^2r^{N-2b-3}dr
\le
M'\log(1/\varepsilon)
$$ for some constant $M'>0.$
Indeed, 
if $(a,b)\in\mathcal A_1$, then $b+1-a>0$, $N-3-2b\ge -1$ and $t<0$. Hence, one has
\begin{align*}
\int_{(\varepsilon^2,\varepsilon)\cup(\varepsilon^{-1},\varepsilon^{-2})}
|F(r)|^2r^{N-2b-3}dr
&=
\int_{(\varepsilon^2,\varepsilon)\cup(\varepsilon^{-1},\varepsilon^{-2})}
r^{N-2b-3}
\exp\Big(\frac{2t}{b+1-a}r^{b+1-a}\Big)dr\\
&\le
\int_{\varepsilon^2}^{\varepsilon}r^{N-2b-3}dr+\int_1^\infty
r^{N-2b-3}
\exp\Big(\frac{2t}{b+1-a}r^{b+1-a}\Big)dr\\
&\le M_{\mathcal A_1}\log(1/\varepsilon) \ \mathrm{for\ some\ constant\ }M_{\mathcal A_1}>0.
\end{align*}
If $(a,b)\in\mathcal A_2$, then $b+1-a<0$, $N-3-2b\le -1$ and $t>0$. Hence, one has
\begin{align*}
\int_{(\varepsilon^2,\varepsilon)\cup(\varepsilon^{-1},\varepsilon^{-2})}
|F(r)|^2r^{N-2b-3}dr
&=
\int_{(\varepsilon^2,\varepsilon)\cup(\varepsilon^{-1},\varepsilon^{-2})}
r^{N-2b-3}
\exp\Big(-\frac{2t}{a-b-1}r^{-(a-b-1)}\Big)dr\\
&\le\int_0^1
r^{N-2b-3}
\exp\Big(-\frac{2t}{a-b-1}r^{-(a-b-1)}\Big)dr+
\int_{\varepsilon^{-1}}^{\varepsilon^{-2}}r^{N-2b-3}dr
\\
&\le M_{\mathcal A_2}\log(1/\varepsilon)\ \mathrm{for\ some\ constant\ }M_{\mathcal A_2}>0.
\end{align*}
If $(a,b)\in\mathcal B_1$, then $b+1-a<0$, $2b+1-N<-1$ and $t>0$. Hence, one has
\begin{align*}
\int_{(\varepsilon^2,\varepsilon)\cup(\varepsilon^{-1},\varepsilon^{-2})}
|F(r)|^2r^{N-2b-3}dr
&=
\int_{(\varepsilon^2,\varepsilon)\cup(\varepsilon^{-1},\varepsilon^{-2})}
r^{2b+1-N}
\exp\Big(-\frac{2t}{a-b-1}r^{-(a-b-1)}\Big)dr\\
&\le\int_0^1
r^{2b+1-N}
\exp\Big(-\frac{2t}{a-b-1}r^{-(a-b-1)}\Big)dr+
\int_{\varepsilon^{-1}}^{\varepsilon^{-2}}r^{2b+1-N}dr
\\
&\le M_{\mathcal B_1}\log(1/\varepsilon)\ \mathrm{for\ some\ constant\ }M_{\mathcal B_1}>0.
\end{align*}
If $(a,b)\in\mathcal B_2$, then $b+1-a>0$, $2b+1-N>-1$ and $t<0$. Hence, one has
\begin{align*}
\int_{(\varepsilon^2,\varepsilon)\cup(\varepsilon^{-1},\varepsilon^{-2})}
|F(r)|^2r^{N-2b-3}dr
&=
\int_{(\varepsilon^2,\varepsilon)\cup(\varepsilon^{-1},\varepsilon^{-2})}
r^{2b+1-N}
\exp\Big(\frac{2t}{b+1-a}r^{b+1-a}\Big)dr\\
&\le\int_0^1 r^{2b+1-N}dr+
\int_1^\infty
r^{2b+1-N}
\exp\Big(\frac{2t}{b+1-a}r^{b+1-a}\Big)dr
\\
&\le M_{\mathcal B_2}\log(1/\varepsilon)\ \mathrm{for\ some\ constant\ }M_{\mathcal B_2}>0.
\end{align*}
Consequently,
$$
\int_{\RN}
\frac{|u_j|^2|\varphi_\varepsilon'(\hK{x})|^2}{\hK{x}^{2b}}dx
\le
\frac{M''}{\log(1/\varepsilon)}
\to0\ \mathrm{as}\ \varepsilon\to0
$$ for\ some\ constant $M''>0$.
Therefore,
$$
\|v_{j,\varepsilon}-u_j\|_{\mathcal C_{K,a,b}}\to0
\quad \text{as }\varepsilon\to0.
$$

For fixed $j$ and $\varepsilon$, let
$
\Omega_\varepsilon
=
\big\{x\in\RN:
\frac{\varepsilon^2}{2}<\hK{x}<\frac{2}{\varepsilon^{2}}
\big\}.
$
Then $v_{j,\varepsilon}$ has compact support in $\Omega_\varepsilon$,
and $\Omega_\varepsilon$ is chosen away from the origin.
Since $v_{j,\varepsilon}\in W^{1,2}(\Omega_\varepsilon)$ has compact support
in $\Omega_\varepsilon$, it follows that
$
v_{j,\varepsilon}\in W^{1,2}_c(\Omega_\varepsilon)
$, where
$W^{1,2}_c(\Omega_\varepsilon)$ denotes the closure of
$C_c^\infty(\Omega_\varepsilon)$ in the norm $\|\cdot\|_{W^{1,2}(\Omega_\varepsilon)}$.
By the density theorem, there exists a sequence
$
w_{j,\varepsilon,n}\in C_c^\infty(\Omega_\varepsilon)
\subset C_c^\infty(\RN\setminus\{o\})
$
such that
$
w_{j,\varepsilon,n}\to v_{j,\varepsilon}
$
as $n\to\infty$ in $W^{1,2}(\Omega_\varepsilon)$. As $\sigma_K$ is bounded on $\Omega_\varepsilon$, then
$
\mathcal R_K(w_{j,\varepsilon,n}-v_{j,\varepsilon})\to0$  as $n\to\infty$ in $L^2(\Omega_\varepsilon)$.
Note that  $\hK{x}^{-2a}$ and $\hK{x}^{-2b}$ are bounded on
$\Omega_\varepsilon$. Therefore,
$$
\|w_{j,\varepsilon,n}-v_{j,\varepsilon}\|_{\mathcal C_{K,a,b}}\to0
\quad \text{as }n\to\infty.
$$

Now for fixed  $j$, we choose $\varepsilon_j>0$ so small that
$$
\|v_{j,\varepsilon_j}-u_j\|_{\mathcal C_{K,a,b}}<\frac1{2j},
$$
and then we choose $n_j$ so large that
$$
\|w_{j,\varepsilon_j,n_j}-v_{j,\varepsilon_j}\|_{\mathcal C_{K,a,b}}
<\frac1{2j}.
$$
It follows that
\begin{align*}
\|w_{j,\varepsilon_j,n_j}-u\|_{\mathcal C_{K,a,b}}
\!\le\!
\|w_{j,\varepsilon_j,n_j}-v_{j,\varepsilon_j}\|_{\mathcal C_{K,a,b}}
\!+\!
\|v_{j,\varepsilon_j}-u_j\|_{\mathcal C_{K,a,b}}
\!+\!
\|u_j\!-\!u\|_{\mathcal C_{K,a,b}}\! \le\!
\frac1j\!+\!\|u_j\!-\!u\|_{\mathcal C_{K,a,b}}.
\end{align*}
Since $u_j\to u$ as $j\to\infty$ in $\mathcal C_{K,a,b}$, one obtains
$$
w_{j,\varepsilon_j,n_j}\to u
\ \ \text{as }j\to\infty
$$ in $\mathcal C_{K,a,b}$.
 Hence,
$
u\in\mathcal C_{K,a,b},
$
and then
the corresponding
constants $C_1$ in $\mathcal A$ and $C_2$ in $\mathcal B$ are sharp.
This completes the proof.
\end{proof}
As a direct consequence of Theorem \ref{t3} and the anisotropic Heisenberg-type identity \eqref{E1-1}, we can obtain the following sharp anisotropic Heisenberg-type inequalities.
\begin{corollary}
Let $K\in\Ksc$ and $N\ge2$. Then, for 
$u\in C_c^\infty(\mathbb R^N\setminus\{o\})$, one has
\begin{align}\label{f77}
\int_{\mathbb R^N}|\mathcal R_K(u)|^2dx
+
\int_{\mathbb R^N}\hK{x}^2u^2dx
\ge
N\int_{\mathbb R^N}u^2dx,
\end{align}
\begin{align}\label{f78}
\bigg(\int_{\mathbb R^N}|\mathcal R_K(u)|^2dx\bigg)^{\frac12}
\bigg(\int_{\mathbb R^N}\hK{x}^2u^2dx\bigg)^{\frac12}
\ge
\frac N2
\int_{\mathbb R^N}u^2dx.  
\end{align}
The sharp constants are $N$ and $\frac{N}{2}$, respectively, and they are achieved in $\mathcal C_{K,-1,0}$.
The equality in \eqref{f77} is attained by
$$
u(x)=\Phi(\sigma_K(x))
\exp\bigg(-\frac12\hK{x}^2\bigg),
$$
while the equality in \eqref{f78} is attained by
$$
u(x)=\Phi(\sigma_K(x))
\exp\bigg(-\frac{1}{2\lambda^2}\hK{x}^2\bigg),
\quad \lambda>0,
$$
where $\Phi\in L^2(\partial K,d\mu_K)$.
\end{corollary}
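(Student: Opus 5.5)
The plan is to read off both inequalities from the identities of Section \ref{s3} and to get sharpness and attainability from the completion-space argument in Theorem \ref{t3}, with $(a,b)=(-1,0)$.

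\emph{Step 1: the inequalities.} For \eqref{f77} I would use the anisotropic Heisenberg-type identity \eqref{E1-1}. Moving $\int_{\RN}\hK{x}^2u^2dx$ to the left and dropping the nonnegative remainder $\int_{\RN}|\mathcal R_K(u)+\hK{x}u|^2dx$ gives \eqref{f77} for every $u\in C_c^\infty(\RN\setminus\{o\})$. For \eqref{f78} I would take $(a,b)=(-1,0)$. Then $b+1-a=2>0$ and $2b=0\le N-2$, so $(-1,0)\in\mathcal A_1$. Also $a+b+1=0$ and $\frac{N-a-b-1}{2}=\frac N2$, so \eqref{t-1} becomes exactly \eqref{f78}. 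The inequality therefore follows from Corollary \ref{cor:Heisenberg-CKN}, or equivalently from Theorem \ref{t3}(i).

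\emph{Step 2: optimizers.} For \eqref{f78}, Theorem \ref{t3}(i) in $\mathcal A_1$ gives extremals $\Phi(\sigma_K(x))e^{\frac t2\hK{x}^2}$ with $t<0$. Writing $t=-\lambda^{-2}$ puts them in the stated form, and the same theorem shows they lie in $\mathcal C_{K,-1,0}$. To double-check equality, I would use $\mathcal R_K(\Phi(\sigma_K)F(\hK{x}))=\Phi(\sigma_K)F'(\hK{x})$. This gives $\mathcal R_K(u)=-\lambda^{-2}\hK{x}u$, so the remainder in \eqref{c3} vanishes exactly as in the proof of Theorem \ref{t2}(ii). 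For \eqref{f77}, the choice $\lambda=1$ gives $\mathcal R_K(u)+\hK{x}u=0$, so the remainder in \eqref{E1-1} vanishes.

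\emph{Step 3: passing equality to the completion, the main obstacle.} Equality has been checked pointwise for the candidates, but \eqref{E1-1} and \eqref{c3} were only stated on $C_c^\infty(\RN\setminus\{o\})$. I will extend them to $\mathcal C_{K,-1,0}$ by density. The $\mathcal C_{K,-1,0}$-norm controls $\|\mathcal R_K(u)\|$ and $\|\hK{x}u\|$, and the approximating sequence $w_{j}$ built in the proof of Theorem \ref{t3} converges to the candidate $u$ in this norm. What remains is to control $\int_{\RN}u^2dx$ along that sequence, since it is not part of the norm. Here the step is to use the inequality \eqref{f78} itself. Applied to differences, it gives $\|w_j-u\|^2\le \frac2N\|\mathcal R_K(w_j-u)\|\,\|\hK{x}(w_j-u)\|$, so $\int_{\RN}u^2dx$ is continuous on $\mathcal C_{K,-1,0}$.

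Consequently all three terms of both identities pass to the limit, so the identities hold on the completion. Since $\Phi\in L^2(\partial K,d\mu_K)$, the polar formula shows that every term is finite for the candidates. Equality is then attained, and the constants $N$ and $\frac N2$ are sharp.
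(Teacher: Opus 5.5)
Your proposal is correct and follows the paper's proof: \eqref{f77} comes from the identity \eqref{E1-1}, and \eqref{f78}, its extremals and their membership in $\mathcal C_{K,-1,0}$ come from Theorem~\ref{t3}(i) at $(a,b)=(-1,0)\in\mathcal A_1$ with $t=-\lambda^{-2}$. Your Step 3 usefully makes explicit the density extension of the identities to the completion, which the paper leaves implicit; just apply \eqref{f78} to $w_j-w_k$ and pass to the limit (e.g.\ by Fatou) rather than to $w_j-u$, since at that point the inequality is only known on $C_c^\infty(\mathbb R^N\setminus\{o\})$.
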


\begin{proof}
From anisotropic Heisenberg-type identity \eqref{E1-1}, one has inequality \eqref{f77}. The equality in \eqref{f77} holds when the remainder in \eqref{E1-1} vanishes,
that is,
$
\mathcal R_K(u)+\hK{x}u=0,
$
it follows that
$$
u(x)=\Phi\big(\sigma_K(x)\big)
\exp\bigg(-\frac12\hK{x}^2\bigg).
$$
 Inequality \eqref{f78} and its equality cases follow from Theorem \ref{t3} by taking
$a=-1$ and $b=0$. 
The above extremal functions in $\mathcal C_{K,-1,0}$ also
follow from Theorem \ref{t3} with $a=-1$ and $b=0$.
\end{proof}

Theorem \ref{t3} gives the sharp anisotropic CKN inequality in terms of the anisotropic
radial derivative $\mathcal{R}_K(u)$. We next derive a corresponding estimate involving
the anisotropic gradient. The basic observation is that the identity $\hK{x}=\|-x\|_K$ does not necessarily hold for $K\in\Ksc\setminus\mathcal{K}_{(o)}^c$.
\begin{theorem}\label{t9}
Let $K\in \Ksc$, $u\in C_c^\infty(\RN\setminus\{o\})$ and $a,b\in\R$.
Then the following max-gradient anisotropic inequalities hold.
\begin{align}\label{f547-max}
\int_{\RN}
\frac{\max\{\|\nabla u\|_{K^*}^2,\|-\nabla u\|_{K^*}^2\}}
{\hK{x}^{2b}}dx
+
\int_{\RN}\frac{u^2}{\hK{x}^{2a}}dx
\ge
C_{ckn1}(N,a,b)
\int_{\RN}\frac{u^2}{\hK{x}^{a+b+1}}dx,
\end{align}
where
$$
C_{ckn1}(N,a,b)=
\begin{cases}
|N-a-b-1|, & (a,b)\in \mathcal A,\\
|N-3b+a-3|, & (a,b)\in \mathcal B.
\end{cases}
$$
When $a=b+1$, the corresponding constant is
$$
C_{ckn1}(N,b+1,b)=1+\frac{(N-2b-2)^2}{4}.
$$
And
\begin{align}\label{f546-new}
\!\!\!\!\!\!\!\bigg(\!
\int_{\RN}\!\frac{\max\{\|\nabla u\|_{K^*}^2,\|-\nabla u\|_{K^*}^2\}}{\hK{x}^{2b}}dx
\bigg)^{\frac12}
\bigg(\!
\int_{\RN}\!\frac{u^2}{\hK{x}^{2a}}dx
\bigg)^{\frac12}
\!\ge\!
C_{ckn2}(N,a,b)
\int_{\RN}\!\frac{u^2}{\hK{x}^{a+b+1}}dx,
\end{align}
where
$$
C_{ckn2}(N,a,b)=
\begin{cases}
\big|\frac{N-a-b-1}{2}\big|, & (a,b)\in \mathcal A,\\[1mm]
\big|\frac{N-3b+a-3}{2}\big|, & (a,b)\in \mathcal B.
\end{cases}
$$
When $a=b+1$, the corresponding constant is
$$
C_{ckn2}(N,b+1,b)=\frac{|N-2(b+1)|}{2}.
$$
\end{theorem}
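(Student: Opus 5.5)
The plan is to reduce both inequalities to the radial-derivative inequalities already proved, via the one-sided anisotropic Cauchy--Schwarz inequality \eqref{f22-0}. Apply \eqref{f22-0} with $x$ and $y=\nabla u(x)$, and divide by $\hK{x}>0$. This gives, for every $x\neq o$,
$$
-\|-\nabla u\|_{K^*}\le \mathcal R_K(u)(x)=\sigma_K(x)\cdot\nabla u(x)\le \|\nabla u\|_{K^*}.
$$
Hence $|\mathcal R_K(u)|\le \max\{\|\nabla u\|_{K^*},\|-\nabla u\|_{K^*}\}$ pointwise. Squaring, the max-gradient integrand dominates $|\mathcal R_K(u)|^2$:
$$
\int_{\RN}\frac{\max\{\|\nabla u\|_{K^*}^2,\|-\nabla u\|_{K^*}^2\}}{\hK{x}^{2b}}dx\ge\int_{\RN}\frac{|\mathcal R_K(u)|^2}{\hK{x}^{2b}}dx.
$$
This is exactly where the lack of symmetry enters: neither $\|\nabla u\|_{K^*}$ nor $\|-\nabla u\|_{K^*}$ alone controls $|\mathcal R_K(u)|$ from both sides, and taking the maximum handles both signs.

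For \eqref{f546-new}, combine this bound with the sharp radial inequality \eqref{t-1} established in Theorem \ref{t3}. In $\mathcal A$ use \eqref{A1-CKN-ineq} or \eqref{A2-CKN-ineq}; in $\mathcal B$ use \eqref{B1-CKN-ineq} or \eqref{B2-CKN-ineq}. On the line $a=b+1$ use the Hardy inequality \eqref{f69} and take square roots. This yields $C_{ckn2}$ directly. If some weighted norm in Corollaries \ref{cor:A1-CKN}--\ref{cor:B2-CKN} vanishes, then $u\equiv0$, since $u$ has compact support away from $o$ and $\mathcal R_K(u)\equiv0$ forces $u$ to be constant along rays. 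So those nondegeneracy hypotheses are harmless.

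For \eqref{f547-max}, combine the pointwise bound with the AM--GM inequality $X^2+Y^2\ge 2XY$, where $X^2$ and $Y^2$ are the two integrals on the left. In $\mathcal A\cup\mathcal B$ this gives $2\widetilde C(N,a,b)=C_{ckn1}$. Alternatively, one can read the inequality off the Hardy identities \eqref{f48}, \eqref{f76}, \eqref{f49}, \eqref{f50} directly, which give exactly the constants $|N-a-b-1|$ and $|N-3b+a-3|$ without the AM--GM step.

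On the critical line $a=b+1$, the constant $1+\frac{(N-2b-2)^2}{4}$ is not of the form $2\widetilde C$. There both weighted $L^2$ terms coincide, so the right side is just $\int u^2\hK{x}^{-2b-2}dx$. Use \eqref{f69}, namely $\int |\mathcal R_K u|^2\hK{x}^{-2b}dx\ge \frac{(N-2b-2)^2}{4}\int u^2\hK{x}^{-2b-2}dx$, and add the term $\int u^2\hK{x}^{-2a}dx$ with coefficient $1$. The only point requiring care is this bookkeeping of which identity is used in each of the four subregions and on the critical line; there is no analytic obstacle, since the statement asserts only the inequalities and not their sharpness.
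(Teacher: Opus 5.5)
Your proposal is correct and takes essentially the same route as the paper. The one-sided inequality \eqref{f22-0} gives the pointwise bound $|\mathcal R_K(u)|^2\le\max\{\|\nabla u\|_{K^*}^2,\|-\nabla u\|_{K^*}^2\}$, which the paper then combines with the Hardy identities \eqref{f48}, \eqref{f76}, \eqref{f49}, \eqref{f50}, with \eqref{f69} on the line $a=b+1$, and with \eqref{t-1}/Theorem \ref{t3} for the multiplicative form, exactly as you do. Your AM--GM variant and your check that the nondegeneracy hypotheses of Corollaries \ref{cor:A1-CKN}--\ref{cor:B2-CKN} only exclude $u\equiv0$ are harmless additions; the paper leaves the latter implicit.
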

\begin{proof}
By the anisotropic Cauchy-Schwarz inequality \eqref{f22-0}, for  $x\in\RN\setminus\{o\}$,
$$
-\hK{x}\|-\nabla u(x)\|_{K^*}
\le
x\cdot\nabla u(x)
\le
\hK{x}\|\nabla u(x)\|_{K^*}.
$$
Therefore,
$
|\mathcal R_K(u)(x)|^2
\le
\max\{\|\nabla u(x)\|_{K^*}^2,\|-\nabla u(x)\|_{K^*}^2\}$
for a.e. $x\in\RN.
$
It follows that
\begin{align}\label{f80}
\int_{\RN}\frac{|\mathcal R_K(u)|^2}{\hK{x}^{2b}}dx
\le
\int_{\RN}
\frac{\max\{\|\nabla u\|_{K^*}^2,\|-\nabla u\|_{K^*}^2\}}
{\hK{x}^{2b}}dx.
\end{align}
Combining \eqref{f80} with the radial anisotropic Hardy-type identities
\eqref{f48}, \eqref{f76}, \eqref{f49} and \eqref{f50} gives
\eqref{f547-max} when $a\ne b+1$.
When $a=b+1$, combining  the anisotropic Hardy inequality \eqref{f69} with \eqref{f80}, one obtains
\begin{align*}
&\int_{\RN}
\frac{
\max\{\|\nabla u\|_{K^*}^2,\|-\nabla u\|_{K^*}^2\}
}{\hK{x}^{2b}}dx
+
\int_{\RN}
\frac{u^2}{\hK{x}^{2b+2}}dx\notag\\
&\ge
\bigg(
1+\frac{(N-2b-2)^2}{4}
\bigg)
\int_{\RN}
\frac{u^2}{\hK{x}^{2b+2}}dx.
\end{align*}

Similarly, combining \eqref{f80} with the radial anisotropic CKN
inequality \eqref{t-1} and Theorem \ref{t3} gives
\eqref{f546-new}. In particular, when $a=b+1$, Theorem
\ref{t3} ($iii$) gives the constant
$
\frac{|N-2b-2|}{2}.
$
\end{proof}

When $K$ is origin-symmetric, the max-gradient formulation in Theorem \ref{t9} gives the following norm-based anisotropic
gradient inequalities as a direct consequence.
\begin{corollary}
Let $K\in\Ksc\cap \mathcal{K}_{(o)}^c$, $u\in C_c^\infty(\RN\setminus\{o\})$ and $a,b\in\R$.
Then the following  anisotropic gradient  inequalities hold.
\begin{align}\label{f70}
\int_{\RN}
\frac{\|\nabla u\|_{K^*}^2}
{\hK{x}^{2b}}dx
+
\int_{\RN}\frac{u^2}{\hK{x}^{2a}}dx
\ge
C_{ckn1}(N,a,b)
\int_{\RN}\frac{u^2}{\hK{x}^{a+b+1}}dx,
\end{align}
where 
$$
C_{ckn1}(N,a,b)=
\begin{cases}
|N-a-b-1|, & (a,b)\in \mathcal A,\\
|N-3b+a-3|, & (a,b)\in \mathcal B.
\end{cases}
$$
And
\begin{align}\label{f72}
\!\!\!\!\!\!\!\bigg(\!
\int_{\RN}\!\frac{\|\nabla u\|_{K^*}^2}{\hK{x}^{2b}}dx
\bigg)^{\frac12}
\bigg(\!
\int_{\RN}\!\frac{u^2}{\hK{x}^{2a}}dx
\bigg)^{\frac12}
\!\ge\!
C_{ckn2}(N,a,b)
\int_{\RN}\!\frac{u^2}{\hK{x}^{a+b+1}}dx,
\end{align}
where
$$
C_{ckn2}(N,a,b)=
\begin{cases}
\big|\frac{N-a-b-1}{2}\big|, & (a,b)\in \mathcal A,\\[1mm]
\big|\frac{N-3b+a-3}{2}\big|, & (a,b)\in \mathcal B.
\end{cases}
$$
When $a=b+1$, the corresponding constants are
$$
C_{ckn1}(N,b+1,b)
=
1+\frac{(N-2b-2)^2}{4}
$$
and
$$
C_{ckn2}(N,b+1,b)
=
\frac{|N-2b-2|}{2}.
$$
Moreover, all the above constants are sharp.
\end{corollary}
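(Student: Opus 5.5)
Since $K$ is origin-symmetric, $\|\cdot\|_{K^*}$ is even, so $\|\nabla u\|_{K^*}=\|-\nabla u\|_{K^*}$ and
$$\max\{\|\nabla u\|_{K^*}^2,\|-\nabla u\|_{K^*}^2\}=\|\nabla u\|_{K^*}^2 .$$
Hence \eqref{f70} and \eqref{f72} are exactly \eqref{f547-max} and \eqref{f546-new} of Theorem \ref{t9}, with the same constants in $\mathcal A$, $\mathcal B$ and on the line $a=b+1$. The real content is sharpness. I would establish it by exhibiting functions for which the only lossy step in Theorem \ref{t9}, namely \eqref{f80}, becomes an equality, and for which the radial identities of Section \ref{s3} are (asymptotically) equalities.

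\textbf{Key observation: radial functions lose nothing in \eqref{f80}.} Take $u(x)=F(\hK{x})$. Then $\nabla u=F'(\hK{x})\nabla\hK{x}$. By \eqref{relation-x-y} and evenness,
$$\|\nabla u\|_{K^*}=|F'(\hK{x})|\,\big\|\nabla\hK{x}\big\|_{K^*}=|F'(\hK{x})|=|\mathcal R_K(u)|.$$
So \eqref{f80} is an equality for radial $u$. It therefore suffices to show that the constants in the radial inequalities are approached by radial functions of $C_c^\infty(\RN\setminus\{o\})$.

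\textbf{Regions $\mathcal A\cup\mathcal B$.} Take $\Phi\equiv1$ in Theorem \ref{t3}, i.e. $F(r)=e^{\frac{t}{b+1-a}r^{b+1-a}}$ in $\mathcal A$ and $F(r)=r^{2b+2-N}e^{\frac{t}{b+1-a}r^{b+1-a}}$ in $\mathcal B$.
\begin{itemize}
\item For \eqref{f72}, any admissible $t$ (sign as in Theorem \ref{t3}) works.
\item For the additive inequality \eqref{f70}, choose $t=\mp1$ so that the remainder in the relevant Hardy-type identity vanishes. Concretely, $t=-1$ in $\mathcal A_1$ makes the remainder of \eqref{f48} vanish, and $t=1$ in $\mathcal A_2$ does the same for \eqref{f76}. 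The analogous choices of $t$ in $\mathcal B_1$ and $\mathcal B_2$ kill the remainders of \eqref{f49} and \eqref{f50}; in each case this solves the same ODE $\partial_r u=tr^{b-a}u-(N-2b-2)r^{-1}u$ with $|t|=1$.
\end{itemize}
Then truncate: set $v_\varepsilon=\varphi_\varepsilon(\hK{x})F(\hK{x})$ with the cutoffs from the proof of Theorem \ref{t3}. The estimates there (the logarithmic bound on $\int|F|^2r^{N-2b-3}dr$, with $\Phi_j\equiv1$) show $v_\varepsilon\to F(\hK{\cdot})$ in $\mathcal C_{K,a,b}$. All three integrals in each inequality are continuous in this norm, since $\int u^2\hK{x}^{-a-b-1}$ is controlled via the identities. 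Hence the quotients for $v_\varepsilon$ converge to the constants.

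\textbf{The line $a=b+1$.} Reuse $u_\varepsilon=\varphi_\varepsilon(\hK{x})\hK{x}^{-\frac{N-2b-2}{2}}$, which is radial. In the proof of Theorem \ref{t3}(iii) it was shown that the Hardy remainder in \eqref{f67}, divided by $\int u_\varepsilon^2\hK{x}^{-2b-2}$, tends to $0$. Since all three weights coincide when $a=b+1$, this gives both the additive quotient $\to1+\frac{(N-2b-2)^2}{4}$ and the multiplicative quotient $\to\frac{|N-2b-2|}{2}$.

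\textbf{Main obstacle: regularity of the test functions.} For $K\in\Ksc$, $\hK{\cdot}$ is only known to be $C^1$ away from $o$, so $v_\varepsilon$ and $u_\varepsilon$ are $C^1_c$ in an annulus but need not be $C^\infty$. I would mollify within a slightly larger annulus $\Omega_\varepsilon$, as in the proof of Theorem \ref{t3}, to get $w_n\to v_\varepsilon$ in $W^{1,2}(\Omega_\varepsilon)$. Because $\|\cdot\|_{K^*}$ is Lipschitz, $\big|\|\nabla w_n\|_{K^*}-\|\nabla v_\varepsilon\|_{K^*}\big|\le C|\nabla(w_n-v_\varepsilon)|$. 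Moreover the weights are bounded on $\Omega_\varepsilon$. So all the gradient and weighted $L^2$ integrals converge, and a diagonal argument gives the required sequence in $C_c^\infty(\RN\setminus\{o\})$.
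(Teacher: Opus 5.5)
Your proposal is correct and follows the paper's proof essentially step by step. You use evenness of $\|\cdot\|_{K^*}$ to reduce to Theorem \ref{t9}, note that $\|\nabla v\|_{K^*}=|\mathcal R_K(v)|$ for $K$-radial $v$, take the $\Phi\equiv1$ extremals of Theorem \ref{t3} (with $|t|=1$ for the additive form) and the radial sequence $u_\varepsilon$ on $a=b+1$, and then cut off and approximate by smooth functions on an annulus. Your explicit Lipschitz bound for $\|\cdot\|_{K^*}$ in that last step makes precise what the paper only asserts. One small wording fix: on $a=b+1$ only the three weights on $u^2$ coincide, not the gradient weight $\hK{x}^{-2b}$.
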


\begin{proof}
Since $K\in\mathcal K_{(o)}^c$, the polar body $K^*$ is also
origin-symmetric. Hence, for every $\xi\in\RN$,
$
\|\xi\|_{K^*}=\|-\xi\|_{K^*}.
$
Therefore,
$$
\max\{\|\nabla u\|_{K^*}^2,\|-\nabla u\|_{K^*}^2\}
=
\|\nabla u\|_{K^*}^2
\quad
\text{a.e. in }\RN.
$$
Applying Theorem \ref{t9} gives \eqref{f70} and \eqref{f72}.

It remains to prove the sharpness of the constants. We first consider the
case $a\ne b+1$. Choosing $\Phi$ in the extremal functions from
Theorem \ref{t3} to be a nonzero constant and taking
$
t=-1$, $t=1$, $t=1$ and $t=-1
$
in $A_1$, $A_2$, $B_1$ and $B_2$, respectively, then the
extremal functions are radial with respect to $K$. By Theorem \ref{t3},
they attain equality in the corresponding radial multiplicative
inequalities. Moreover, direct substitution into the additive identities
\eqref{f48}, \eqref{f76}, \eqref{f49} and \eqref{f50} shows that the
corresponding square remainders vanish. Hence, they also attain equality
in the corresponding radial additive inequalities.

Indeed, let $r=\hK{x}$ and let $v(x)=v(r)$ be radial. Then
$
\nabla v(x)=v'(r)\nabla\hK{x}.
$
Since
$
x\cdot\nabla\hK{x}=\hK{x},
$
one has
$$
\mathcal R_K(v)(x)
=
\frac{x\cdot\nabla v(x)}{\hK{x}}
=
v'(r).
$$
Since
$
\|\nabla\hK{x}\|_{K^*}=1
$
and $\|\cdot\|_{K^*}$ is even, one obtains
$$
\|\nabla v(x)\|_{K^*}
=
|v'(r)|
=
|\mathcal R_K(v)(x)|
$$
for a.e. $x\in\RN$.

By the same radial cut-off and density argument used in the proof of
Theorem \ref{t3}, these radial extremal functions can be approximated by
functions in $C_c^\infty(\RN\setminus\{o\})$ in the corresponding
weighted full-gradient norm. Indeed, for the radial cut-offs, the
weighted full-gradient term coincides with the weighted
radial-derivative term by the identity above. For each fixed cut-off, the
final smooth approximation follows from the $W^{1,2}$-density theorem on
an annulus, where the weights are bounded and $\|\cdot\|_{K^*}$ is
equivalent to the Euclidean norm. Hence, the constants in the gradient
inequalities cannot be improved. This proves the sharpness when
$a\ne b+1$.

We next consider the case $a=b+1$.
Let $\{u_\varepsilon\}$ be the radial extremizing sequence constructed in
the proof of Theorem \ref{t3}, and let
$$
A_\varepsilon
=
\int_{\RN}
\frac{\|\nabla u_\varepsilon\|_{K^*}^2}
{\hK{x}^{2b}}dx\ \
\mathrm{and}\ \
B_\varepsilon
=
\int_{\RN}
\frac{u_\varepsilon^2}
{\hK{x}^{2b+2}}dx.
$$
Since $u_\varepsilon$ is radial, one has
$$
A_\varepsilon
=
\int_{\RN}
\frac{|\mathcal R_K(u_\varepsilon)|^2}
{\hK{x}^{2b}}dx.
$$
The construction in the proof of Theorem \ref{t3} gives
$$
\frac{A_\varepsilon^{\frac12}B_\varepsilon^{\frac12}}
{B_\varepsilon}
\rightarrow \frac{|N-2b-2|}{2}
\quad
\text{as }\varepsilon\to0.
$$
Consequently,
$$
\frac{A_\varepsilon}{B_\varepsilon}
\rightarrow \frac{(N-2b-2)^2}{4}
\quad
\text{as }\varepsilon\to0.
$$
Therefore,
$$
\frac{A_\varepsilon+B_\varepsilon}{B_\varepsilon}
\rightarrow
1+\frac{(N-2b-2)^2}{4}
\ \
\mathrm{and}\ \
\frac{A_\varepsilon^{\frac12}B_\varepsilon^{\frac12}}
{B_\varepsilon}
\rightarrow
\frac{|N-2b-2|}{2}
\quad
\text{as }\varepsilon\to0.
$$
Choosing the smooth approximation sufficiently close for each
$\varepsilon$ and then taking a diagonal sequence, we obtain extremizing
sequences in $C_c^\infty(\RN\setminus\{o\})$. Hence, the constants are
also sharp when $a=b+1$.
\end{proof}

\bigskip
\noindent \textbf{Acknowledgments.}
The author is grateful to Professors Baocheng Zhu and Denghui Wu  for their valuable comments and helpful suggestions.

\section*{Statements and Declarations}

The author declares no competing interests.


\begin{thebibliography}{99}

\bibitem{ACP}
B. Abdellaoui, E. Colorado and I. Peral, \textit{Some improved Caffarelli-Kohn-Nirenberg inequalities}, Calc. Var. Partial Differential Equations, \textbf{23} (2005), 327-345.

\bibitem{AFTL97}
A. Alvino, V. Ferone, G. Trombetti and P. L. Lions, \textit{Convex symmetrization and applications}, Ann. Inst. H. Poincar\'e{} C Anal. Non Lin\'eaire,
\textbf{14} (1997), 275-293.

\bibitem{BC25}
J. Bao and X. Chen,
\textit{On the anisotropic Caffarelli-Kohn-Nirenberg type inequalities:
Existence, symmetry breaking region and symmetry of extremal functions},
Commun. Contemp. Math., \textbf{27} (2025), Paper No. 2550016, 25 pp.

\bibitem{Bellettini}
G. Bellettini and M. Paolini, \textit{Anisotropic motion by mean curvature in the context of Finsler geometry}, Hokkaido Math. J., \textbf{25} (1996), 537-566.

\bibitem{BianchiCianchiGronchi}
G. Bianchi, A. Cianchi and P. Gronchi,
\textit{Anisotropic symmetrization, convex bodies, and isoperimetric inequalities},
Adv. Math., \textbf{462} (2025), Paper No. 110085, 36.

\bibitem{BE91}
G. Bianchi and H. Egnell,
\textit{A note on the Sobolev inequality},
J. Funct. Anal., \textbf{100} (1991), no. 1, 18-24.

\bibitem{BL85}
H. Brezis and E. H. Lieb,
\textit{Sobolev inequalities with remainder terms},
J. Funct. Anal., \textbf{62} (1985), no. 1, 73-86.

\bibitem{CKN}
L. Caffarelli, R. Kohn and L. Nirenberg, \textit{First order interpolation inequalities with
weights}, Compositio Math., \textbf{53} (1984), 259-275.

\bibitem{CM13}
P. Caldiroli and R. Musina, \textit{Symmetry breaking of extremals for the Caffarelli-Kohn-Nirenberg inequalities in a non-Hilbertian setting},
Milan J. Math., \textbf{81} (2013), 421-430.

\bibitem{CC09}
F. Catrina and D. G. Costa, \textit{Sharp weighted-norm inequalities for functions with compact support in $\RN\setminus \{0\}$}, J. Differential Equations, \textbf{246} (2009), 164-182.

\bibitem{CW01}
F. Catrina and Z. Wang, \textit{On the Caffarelli-Kohn-Nirenberg inequalities: sharp constants, existence (and nonexistence), and symmetry of extremal functions}, Comm. Pure Appl. Math., \textbf{54} (2001), 229-258.

\bibitem{CFL21}
C. Cazacu, J. Flynn and N. Lam, \textit{Short proofs of refined sharp Caffarelli-Kohn-Nirenberg inequalities}, J. Differential Equations, \textbf{302} (2021), 533-549.

\bibitem{Cazacu}
C. Cazacu, J. Flynn, N. Lam and G. Lu, \textit{Caffarelli-Kohn-Nirenberg identities, inequalities and their stabilities}, J. Math. Pures Appl. (9), \textbf{182} (2024), 253-284.

\bibitem{CLT24}
L. Chen, G. Lu and H. Tang,
\textit{Stability of Hardy-Littlewood-Sobolev inequalities with explicit lower bounds},
Adv. Math., \textbf{450} (2024), Paper No. 109778.

\bibitem{CLT242}
L. Chen, G. Lu and H. Tang,
\textit{Optimal asymptotic lower bound for stability of fractional Sobolev inequality
and the global stability of log-Sobolev inequality on the sphere},
Adv. Math., \textbf{479} (2025), Part B, Paper No. 110438.

\bibitem{CLT243}
L. Chen, G. Lu and H. Tang,
\textit{Optimal stability of Hardy-Littlewood-Sobolev and Sobolev inequalities
of arbitrary orders with dimension-dependent constants},
Math. Ann., \textbf{394} (2026), 77.

\bibitem{CLTW}
L. Chen, G. Lu, H. Tang and B. Wang,
\textit{Asymptotically sharp stability of Sobolev inequalities on the Heisenberg group
with dimension-dependent constants},
J. Math. Pures Appl., \textbf{206} (2026), Paper No. 103832.

\bibitem{CC22}
G. Ciraolo and R. Corso, \textit{Symmetry for positive critical points of Caffarelli-Kohn-Nirenberg inequalities},
Nonlinear Anal., \textbf{216} (2022), Paper No. 112683, 23 pp.

\bibitem{CNV}
D. Cordero-Erausquin, B. Nazaret and C. Villani, \textit{A mass-transportation approach to sharp Sobolev and Gagliardo-Nirenberg inequalities}, Adv. Math., \textbf{182} (2004), 307-332.

\bibitem{Cos08}
D. G. Costa, \textit{Some new and short proofs for a class of Caffarelli-Kohn-Nirenberg type inequalities}, J. Math. Anal. Appl., \textbf{337} (2008), 311-317.

\bibitem{DM23}
P. Dang and W. Mai,
\textit{Improved Caffarelli-Kohn-Nirenberg inequalities and uncertainty principle},
J. Geom. Anal., \textbf{34} (2024), Paper No. 70, 26.

\bibitem{DBG}
F. Della Pietra, G. di Blasio and N. Gavitone, \textit{Anisotropic Hardy inequalities}, Proc. Roy. Soc. Edinburgh Sect. A, \textbf{148} (2018), 483-498.

\bibitem{DD}
M. Del Pino and J. Dolbeault, \textit{Best constants for Gagliardo-Nirenberg inequalities and applications to nonlinear diffusions}, J. Math. Pures Appl.(9), \textbf{81} (2002), 847-875.

\bibitem{DT24}
S. Deng and X. Tian,
\textit{Gradient stability of Caffarelli-Kohn-Nirenberg inequality involving weighted $p$-Laplacian}, arXiv:2401.04129, 2024.

\bibitem{Do}
A. X. Do, J. Flynn, N. Lam and G. Lu, \textit{$L^p$-Caffarelli-Kohn-Nirenberg inequalities and their stabilities}, arXiv:2310.07083, 2023.

\bibitem{DoLam}
A. X. Do, N. Lam, G. Lu and V. H. Nguyen,
\textit{Caffarelli-Kohn-Nirenberg and weighted Gaussian Poincar\'e inequalities:
a complete characterization of sharp $L^2$ stability and $L^p$ extensions},
arXiv:2606.08939, 2026.

\bibitem{DEFFL}
J. Dolbeault, M. J. Esteban, A. Figalli, R. L. Frank and M. Loss,
\textit{Sharp stability for Sobolev and log-Sobolev inequalities, with optimal dimensional dependence},
Cambridge J. Math., \textbf{13} (2025), no. 2, 359-430.

\bibitem{DE12}
J. Dolbeault, M. J. Esteban and M. Loss, \textit{Symmetry of extremals of functional inequalities via spectral estimates for linear operators},
J. Math. Phys., \textbf{53} (2012), 095204, 18.

\bibitem{DET09}
J. Dolbeault, M. J. Esteban, M. Loss and G. Tarantello, \textit{On the symmetry of extremals for the Caffarelli-Kohn-Nirenberg inequalities},
Adv. Nonlinear Stud., \textbf{9} (2009), 713-726.

\bibitem{Dong18} M. Dong, \textit{Existence of extremal functions for higher-order Caffarelli-Kohn-Nirenberg inequalities}, Adv. Nonlinear Stud., \textbf{18} (2018), no. 3, 543-553.

\bibitem{DN25a}
A. T. Duong and V. H. Nguyen,
\textit{On the sharp second order Caffarelli-Kohn-Nirenberg inequality},
Ann. Fenn. Math., \textbf{50} (2025), no. 1, 275-286.

\bibitem{DN25b}
A. T. Duong and V. H. Nguyen,
\textit{On the stability estimate for the sharp second order uncertainty principle},
Calc. Var. Partial Differential Equations, \textbf{64} (2025), Paper No. 129.

\bibitem{DLL}
N. T. Duy, N. Lam and G. Lu,
\textit{$p$-Bessel pairs, Hardy’s identities and inequalities and Hardy-Sobolev inequalities with monomial weights},
J. Geom. Anal., \textbf{32} (2022), Paper No. 109, 36 pp.

\bibitem{DPH25}N. T. Duy, N. V. Phong and P. T. T. Hien,
\textit{Hardy inequalities with Bessel pair for Dunkl operator},
Adv. Nonlinear Stud., \textbf{25} (2025), no. 4, 1127-1141.

\bibitem{FS}
V. Felli and M. Schneider,
\textit{Perturbation results of critical elliptic equations of Caffarelli-Kohn-Nirenberg type},
J. Differential Equations, \textbf{191} (2003), 121-142.

\bibitem{Ferone-Volpicelli}
A. Ferone and R. Volpicelli, \textit{Convex rearrangement: equality cases in the P\'olya-Szeg\"o inequality}, Calc. Var. Partial Differential Equations, \textbf{21} (2004), 259-272.

\bibitem{FigalliMaggiPratelli}
A. Figalli, F. Maggi and A. Pratelli,
\textit{A mass transportation approach to quantitative isoperimetric inequalities},
Invent. Math., \textbf{182} (2010), no. 1, 167-211.

\bibitem{FMP}
A. Figalli, F. Maggi and A. Pratelli,
\textit{Sharp stability theorems for the anisotropic Sobolev and log-Sobolev inequalities on functions of bounded variation},
Adv. Math., \textbf{242} (2013), 80-101.

\bibitem{Flynn20} J. Flynn, \textit{Sharp Caffarelli-Kohn-Nirenberg-type inequalities on Carnot groups}, Adv. Nonlinear Stud., \textbf{20} (2020), no. 1, 95-111.

\bibitem{FonsecaMuller}
I. Fonseca and S. Müller,
\textit{A uniqueness proof for the Wulff theorem},
Proc. Roy. Soc. Edinburgh Sect. A, \textbf{119} (1991), no. 1-2, 125-136.

\bibitem{GM11}
N. Ghoussoub and A. Moradifam,
\textit{Bessel pairs and optimal Hardy and Hardy-Rellich inequalities},
Math. Ann., \textbf{349} (2011), 1-57.

\bibitem{GM13}
N. Ghoussoub and A. Moradifam,
\textit{Functional inequalities: new perspectives and new applications},
Math. Surveys Monogr., \textbf{187} (2013), xxiv+299.

\bibitem{Lam19}
N. Lam, \textit{General sharp weighted Caffarelli-Kohn-Nirenberg inequalities}, Proc. Roy. Soc. Edinburgh Sect. A, \textbf{149} (2019), 691-718.

\bibitem{Lam21}
N. Lam, \textit{Sharp weighted isoperimetric and Caffarelli-Kohn-Nirenberg inequalities}, Adv. Calc. Var., \textbf{14} (2021), 153-169.

\bibitem{LamLu17}
N. Lam and G. Lu, \textit{Sharp constants and optimizers for a class of Caffarelli-Kohn-Nirenberg inequalities},
Adv. Nonlinear Stud., \textbf{17} (2017), 457-480.

\bibitem{LMP2019}
N. Lam, A. Maalaoui and A. Pinamonti,
\textit{Characterizations of anisotropic high order Sobolev spaces},
Asymptot. Anal., \textbf{113} (2019), no. 4, 239-260.

\bibitem{LiYan23}
Y. Li and X. Yan,
\textit{Anisotropic Caffarelli-Kohn-Nirenberg type inequalities},
Adv. Math., \textbf{419} (2023), Paper No. 108958, 44.

\bibitem{Lieb}
E. H. Lieb, \textit{Sharp constants in the Hardy-Littlewood-Sobolev and related inequalities}, Ann. of Math. (2), \textbf{118} (1983), 349-374.

\bibitem{LXZZZ26}
Q. Liu, J. Xiao, N. Zhang, R. Zhang and B. Zhu,
\textit{A Minkowski theory for the exterior capacitary volumes and a resolution
of the P\'olya-Szeg\"{o} conjecture},
arXiv:2607.02273, 2026.

\bibitem{Lu}
G. Lu, Y. Shen, J. Xue and M. Zhu, \textit{Weighted anisotropic isoperimetric inequalities and existence of extremals for singular anisotropic Trudinger-Moser inequalities}, Adv. Math., \textbf{458} (2024), Paper No. 109949, 51 pp.

\bibitem{Mercaldo}
A. Mercaldo, M. Sano and F. Takahashi, \textit{Finsler Hardy inequalities}, Math. Nachr., \textbf{293} (2020), 2370-2398.

\bibitem{Nguyen}
V. H. Nguyen, \textit{New approach to the affine P\'olya-Szeg\"o principle and the stability version of the affine Sobolev inequality}, Adv. Math., \textbf{302} (2016), 1080-1110.

\bibitem{N}
V. H. Nguyen, \textit{Sharp weighted Sobolev and Gagliardo-Nirenberg inequalities}, Proc. Lond. Math. Soc., \textbf{111} (2015), 1276-1321.

\bibitem{Shen25}
Y. Shen, \textit{On the anisotropic Caffarelli-Kohn-Nirenberg and weighted Hardy-Sobolev type inequalities: Sharp constants, existence and explicit form of extremal functions},
Discrete Contin. Dyn. Syst., to appear, 2025.

\bibitem{Talenti}
G. Talenti, \textit{Best constant in Sobolev inequality}, Ann. Mat. Pura Appl.(4), \textbf{110} (1976), 353-372.

\bibitem{Taylor}
J. E. Taylor,
\textit{Crystalline variational problems},
Bull. Amer. Math. Soc., \textbf{84} (1978), no. 4, 568-588.

\bibitem{VanSchaftingen}
J. Van Schaftingen, \textit{Anisotropic symmetrization}, Ann. Inst. H. Poincar\'e Anal. Non Lin\'eaire, \textbf{23} (2006), no. 4, 539-565.

\bibitem{WW03}
Z.-Q. Wang and M. Willem, \textit{Caffarelli-Kohn-Nirenberg inequalities with remainder terms},
J. Funct. Anal., \textbf{203} (2003), 550-568.

\end{thebibliography}
\end{document}